\newtheorem{theorem}{Theorem}
\newtheorem{proposition}[theorem]{Proposition}
\newtheorem{corollary}[theorem]{Corollary}
\newtheorem{lemma}[theorem]{Lemma}
\theoremstyle{remark}
\newtheorem{remark}{Remark}
\newtheorem*{definition}{Definition}
\newtheorem*{althyp}{Alternative Hypothesis}
\begin{document}
\title{Conic degeneration and the determinant of the Laplacian}
\author{David A. Sher}
\begin{abstract} We investigate the behavior of various spectral invariants, particularly the determinant of the Laplacian, on a family of smooth Riemannian manifolds $\Omega_\epsilon$ which undergo conic degeneration; that is, which converge in a particular way to a manifold with a conical singularity. Our main result is an asymptotic formula for the determinant up to terms which vanish as $\epsilon$ goes to zero. The proof proceeds in two parts; we study the fine structure of the heat trace on the degenerating manifolds via a parametrix construction, and then use that fine structure to analyze the zeta function and determinant of the Laplacian.
\end{abstract}
\maketitle

\section{Introduction}

Let $N$ 
be a closed Riemannian manifold of dimension $n-1$ with a fixed metric $dy^{2}$. The infinite cone over $N$, which we call $C_N$, is the Riemannian manifold $(0,\infty)_r\times N_y$ with the conic metric
\begin{equation}\label{conicmetric}
dr^{2}+r^{2}dy^{2}.
\end{equation}
Let $\Omega_{0}$ be a compact
manifold of dimension $n\geq 2$ which has a single isolated 
conical singularity
of cross-section $N$ at a point $P\in\Omega_{0}$. This means precisely that there is some $\delta>0$ and some neighborhood of $P$ which is isometric to $(0,\delta]_{r}\times N_{y}$, with the conic metric (\ref{conicmetric}), where $P$ corresponds to $r=0$. Throughout, we assume without loss of generality that $\Omega_{0}$ is exactly conic for $r\leq 2$. Further, let $Z$ be a complete manifold, without boundary, which is exactly conic in a neighborhood of $\infty$ with cross-section $N$; that is, $Z$ has the metric (\ref{conicmetric}), but in a neighborhood of $r=\infty$ rather than $r=0$. Again, without loss of generality, we assume that $Z$ is exactly conic for $r\geq\frac{1}{2}$.

Using $Z$, we construct a degenerating family of smooth manifolds $\Omega_\epsilon$ that converge to $\Omega_{0}$. First, for any fixed $\epsilon\geq 0$, we form a manifold which we call $\epsilon Z$ by scaling the metric on $Z$ by $\epsilon^2$, so that all distances are multiplied by $\epsilon$. If we first cut $Z$ at $r=1/\epsilon$ and then scale the metric by $\epsilon^{2}$, we
obtain the manifold $\epsilon\{Z\cap\{r\leq 1/\epsilon\}\}$. Now for each $\epsilon>0$, we replace the portion of $\Omega_{0}$ where $r\leq 1$ with $\epsilon\{Z\cap\{r\leq 1/\epsilon\}\}$.
The metrics agree in a neighborhood of the boundary $\{r=1\}$, so they may be glued together
to obtain the smooth manifold $\Omega_{\epsilon}$. This process
is illustrated in Figure \ref{construction}.
Note that
in the region $\{r\leq1\}$, $\Omega_{\epsilon}$ is isometric to $\epsilon Z$, while in the region
$\{r\geq\epsilon\}$, $\Omega_{\epsilon}$ is isometric to $\Omega_{0}$. As $\epsilon\rightarrow 0$, the manifolds $\Omega_\epsilon$ converge in a natural way to $\Omega_0$. 

We investigate the behavior of the spectrum of the Laplacian, the trace of the heat kernel, and the determinant of the Laplacian $\Delta_{\Omega_{\epsilon}}$ on $\Omega_\epsilon$ as $\epsilon$ approaches zero. First let $\lambda_{\epsilon,i}$ be the $i$th eigenvalue (counted with multiplicity) of $\Delta_{\Omega_{\epsilon}}$. Further, let $H^{\Omega_{\epsilon}}(t,z,z')$ be the heat kernel on $\Omega_{\epsilon}$ at time $t$, with $Tr H^{\Omega_{\epsilon}}(t)$ its trace. Then recall that the \emph{zeta function} on $\Omega_{\epsilon}$ is
\[\zeta_{\Omega_{\epsilon}}(s)=\sum_{i=1}^{\infty}\lambda_{\epsilon,i}^{-s}=\frac{1}{\Gamma(s)}\int_0^{\infty}(Tr H^{\Omega_{\epsilon}}(t)-1)t^{s-1}\ dt.\]
The zeta function has a meromorphic continuation to $\mathbb C$ with no pole at $s=0$, and the determinant of the Laplacian is given by \[\det\Delta_{\Omega_{\epsilon}}=e^{-\zeta'_{\Omega_{\epsilon}}(0)}.\]

\begin{figure}
\centering
\includegraphics[scale=0.75]{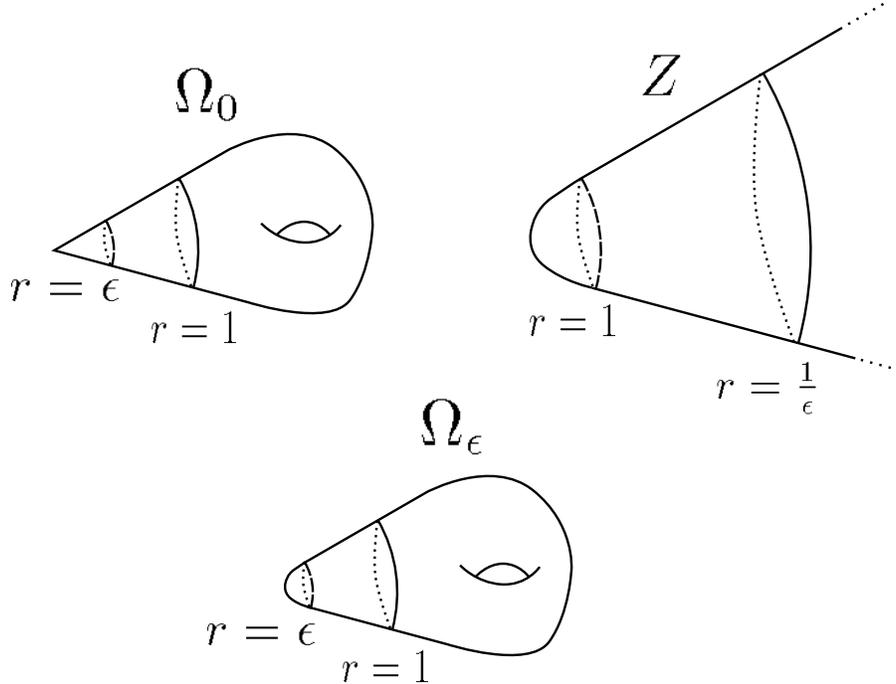}
\caption{Conic degeneration: construction of $\Omega_{\epsilon}$.}\label{construction}
\end{figure}

\subsection{Motivation}

Families of smooth Riemannian manifolds which degenerate to a manifold with conical singularities arise naturally in various settings in differential geometry and PDE.
One such setting is a program, initiated
by Melrose and collaborators, to study elliptic differential equations on singular spaces through
the methods of geometric microlocal analysis. The goal is to study the transition between smooth and singular geometry under the degenerating process, and in particular to examine the limiting behavior of various geometric and index-theoretic invariants. In this setting, conic degeneration was first introduced in the Ph.D. thesis of McDonald \cite{mcd}, who studied the behavior of the Schwarz kernel of the resolvent under a type of conic degeneration similar to the one we consider. The particular technique he used is known as analytic surgery.

Along these lines, one of our ultimate objectives is a generalization of the Cheeger-M\"uller theorem to manifolds with conical singularities.
The Cheeger-M\"uller theorem in the smooth setting concerns the analytic torsion,
which is an invariant
built out of an alternating sum of determinants of Laplacians, first 
defined in \cite{rs}. The analytic torsion appears to be strongly
dependent on the metric; however, the Cheeger-M\"uller theorem shows that
it is in fact equal to a combinatorial invariant called the R-torsion. The
theorem was first conjectured in \cite{rs} 
and then proven independently in \cite{ch1} and \cite{mu}.
Analyzing the determinant of the scalar Laplacian under conic degeneration is a first step towards analyzing the limiting behavior of the analytic torsion. We hope to combine this analysis with an analysis of the R-torsion under conic degeneration to obtain a Cheeger-M\"uller theorem for manifolds with conical singularities. 

Analyzing the behavior of the determinant under conic degeneration may also have
some bearing on the isospectral problem. In 
a well-known series of papers \cite{ops1,ops2}, Osgood,
Phillips, and Sarnak use the determinant of the Laplacian
to prove that any set of isospectral
metrics on a closed surface is compact. A key step in their approach is to show that the log of the determinant is a proper map on the moduli space of
constant curvature metrics. Using similar techniques,
they also prove isospectral
compactness for planar domains in $\mathbb R^{2}$ with
Dirichlet boundary conditions \cite{ops3}.

A natural question, asked by Khuri in \cite{kh}, is whether these results
generalize to sets of isospectral metrics on topological surfaces 
of genus $p$ with $n$ disks removed, again with Dirichlet boundary
conditions, for $np\geq 1$. Khuri showed that the approach of \cite{ops1,ops2,ops3}
cannot work in this setting, because the log of the determinant is no longer a proper map
on the appropriate moduli space. In particular, Khuri
constructed explicit families 
of constant curvature metrics which approach the boundary of
moduli space but which have log determinants staying bounded from below.
These families are constructed by taking a fixed flat metric on a surface of genus $p$ with $n$ conical singularities and then excising smaller and smaller disks around the conic points; see \cite{kh} for the details. This process is similar to our conic degeneration.
We hope that understanding the precise behavior of the determinant under conic degeneration will help us gain a better understanding of the determinant on moduli space and provide
new approaches to the isospectral compactness problem. We should also mention that Khuri's problem was recently partially solved by Kim, who proved isospectral compactness in the $np\geq 1$ case for flat metrics via a different approach \cite{kim}.

\subsection{Prior work}

The particular form of conic degeneration we consider was first introduced in
2004 by Degeratu and Mazzeo \cite{ma2}, in slightly greater generality. Their goal was
to analyze elliptic operators not just on manifolds with conical singularities
but also on iterated cone-edge spaces. They studied the behavior of the
eigenvalues of the Laplacian in this general setting, proving convergence of
the spectrum under certain general assumptions \cite{ma2}.
Our conic degeneration corresponds to the depth-1 case of their
work, for which Rowlett proved the following spectral convergence \cite{ma2,row}:
\begin{theorem}\label{specconv}[Degeratu-Mazzeo, Rowlett] Assume $n\geq 5$.
Let $\lambda_{\epsilon,i}$ be the $i$th eigenvalue
of the Laplacian on $\Omega_{\epsilon}$, and let $\lambda_{0,i}$ be the $i$th
eigenvalue of the Laplacian, with the Friedrichs extension, on $\Omega_{0}$.
Then for each $i$, as $\epsilon\rightarrow 0$,
\[\lambda_{\epsilon,i}\rightarrow\lambda_{0,i}.\]
\end{theorem}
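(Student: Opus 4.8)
The plan is to prove this via the variational (min--max) characterization of eigenvalues, comparing Rayleigh quotients on $\Omega_\epsilon$ and on $\Omega_0$ and treating the conical tip $P$ as a set of vanishing $H^1$-capacity. The structural facts I would rely on are: (i) for each $\epsilon$, $\Omega_\epsilon$ and $\Omega_0$ are isometric on $\{r\ge\epsilon\}$, and these regions exhaust $\Omega_0\setminus\{P\}$ as $\epsilon\to0$; (ii) the Friedrichs extension of $\Delta_{\Omega_0}$ is the self-adjoint operator associated to the closure of the form $u\mapsto\int_{\Omega_0}|\nabla u|^2$ on $C_c^\infty(\Omega_0\setminus\{P\})$, and since a point has zero $H^1$-capacity in dimension $n\ge2$ its form domain is exactly $H^1(\Omega_0)$; and (iii) $H^1(\Omega_0)\hookrightarrow L^2(\Omega_0)$ is compact, so $\Delta_{\Omega_0}^{\mathrm{Fr}}$ has discrete spectrum and the min--max principle applies to it.

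For the upper bound $\limsup_{\epsilon\to0}\lambda_{\epsilon,i}\le\lambda_{0,i}$ I would take $L^2$-orthonormal eigenfunctions $\phi_1,\dots,\phi_i$ of $\Delta_{\Omega_0}^{\mathrm{Fr}}$ for $\lambda_{0,1}\le\cdots\le\lambda_{0,i}$ and cut them off near $P$. Using the zero capacity of $\{P\}$ (a logarithmic cutoff on the conic collar), there are $\chi_\epsilon$ equal to $0$ near $P$ and to $1$ on $\{r\ge\sqrt\epsilon\}$ with $\|\chi_\epsilon\phi_j-\phi_j\|_{H^1(\Omega_0)}\to0$. Since $\chi_\epsilon\phi_j$ is supported where $\Omega_\epsilon$ and $\Omega_0$ agree, it is also a test function on $\Omega_\epsilon$ with the same Dirichlet energy and $L^2$ norm; hence $\mathrm{span}(\chi_\epsilon\phi_1,\dots,\chi_\epsilon\phi_i)\subset H^1(\Omega_\epsilon)$ has Rayleigh quotients bounded by $\lambda_{0,i}+o(1)$, and min--max on $\Omega_\epsilon$ gives the claim. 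This direction should be routine.

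For the lower bound, the upper bound already gives $\lambda_{\epsilon,j}\le C$ uniformly for $j\le i$, so the $L^2$-normalized eigenfunctions satisfy $\|u_{\epsilon,j}\|_{H^1(\Omega_\epsilon)}^2=1+\lambda_{\epsilon,j}\le1+C$. On each fixed region $\{r\ge1/k\}$, where all $\Omega_\epsilon$ with $\epsilon<1/k$ coincide with $\Omega_0$, the family $\{u_{\epsilon,j}\}$ is bounded in $H^1$, so by Rellich and a diagonal argument over $k$ I would extract $\epsilon_m\to0$ and functions $v_j$ on $\Omega_0\setminus\{P\}$ with $u_{\epsilon_m,j}\to v_j$ in $L^2_{\mathrm{loc}}$ and weakly in $H^1_{\mathrm{loc}}$, and with $\lambda_{\epsilon_m,j}\to\mu_j$. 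The step that needs genuine work is a \emph{uniform tightness} estimate: a modulus $\omega(\delta)\to0$ with $\int_{\Omega_\epsilon\cap\{r\le\delta\}}u_{\epsilon,j}^2\,dV\le\omega(\delta)+o_\epsilon(1)$. On the conic collar $\{\epsilon/2\le r\le\delta\}\times N$ I would get this from a radial energy estimate: writing $\psi(r)$ for the squared $L^2(N)$-norm of $u_{\epsilon,j}$ on the slice $\{r\}\times N$, Cauchy--Schwarz gives $|(\psi^{1/2})'(r)|\le r^{(1-n)/2}e(r)^{1/2}$ with $e(r)$ the radial energy density, $\int e\le C$; since $\int^\delta r^{1-n}\,dr<\infty$ for $n\ge3$, this bounds $\psi(r)$ on the collar and hence $\int_{\epsilon/2}^\delta r^{n-1}\psi(r)\,dr=O(\delta^2)$. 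On the collapsing core $\{r\le\epsilon/2\}$ of $\Omega_\epsilon$, which is $Z\cap\{r\le1/2\}$ with metric rescaled by $\epsilon^2$, I would rescale back to $Z$ and apply a fixed trace--Poincar\'e inequality on that compact manifold-with-boundary, the boundary term being controlled by the collar estimate, to obtain an $O(\epsilon^2)$ bound. Granting tightness, the $L^2$ masses and pairwise inner products pass to the limit, so $\|v_j\|_{L^2(\Omega_0)}=1$ and $\langle v_j,v_k\rangle_{L^2}=\delta_{jk}$; weak lower semicontinuity of the energy on each $\{r\ge1/k\}$ gives $v_j\in H^1(\Omega_0)$, the Friedrichs form domain; and testing the eigenvalue equation against $\varphi\in C_c^\infty(\Omega_0\setminus\{P\})$ and letting $m\to\infty$ shows $v_j$ is a weak eigenfunction, hence $\Delta_{\Omega_0}^{\mathrm{Fr}}v_j=\mu_j v_j$. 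Then $v_1,\dots,v_i$ span an $i$-dimensional subspace of the Friedrichs form domain with Rayleigh quotient $\le\mu_i$, so $\lambda_{0,i}\le\mu_i=\lim_m\lambda_{\epsilon_m,i}$; combined with the upper bound this forces $\lim_m\lambda_{\epsilon_m,i}=\lambda_{0,i}$, and since every subsequence has a further such subsequence, $\lambda_{\epsilon,i}\to\lambda_{0,i}$.

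I expect the main obstacle to be the lower bound, and within it the uniform tightness estimate near $P$: it is exactly what prevents $L^2$ mass from leaking into the collar or the collapsing core, and so what makes the limiting functions genuinely $L^2$-normalized and orthonormal rather than weak limits of smaller norm. It is also what selects the \emph{Friedrichs} extension: an $H^1_{\mathrm{loc}}$-limit of a family bounded in $H^1$ must itself lie in $H^1(\Omega_0)$, whereas the domains of the other self-adjoint extensions of $\Delta_{\Omega_0}$ (which exist precisely when $\Delta_N$ has eigenvalues in $(0,n-1)$) contain functions with $r^{-\nu}$-type singularities at $P$ that are not in $H^1$ and hence cannot arise this way. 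Alternatively, one could read off the convergence, with a quantitative rate, from a uniform resolvent or parametrix analysis on the total space of the degeneration in the spirit of Degeratu--Mazzeo; the variational argument above is the more elementary route, at the cost of not producing a rate.
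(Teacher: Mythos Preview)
Your proposal is essentially correct and takes a genuinely different route from the paper. The paper (following Degeratu--Mazzeo and Rowlett) argues via a weighted $L^\infty$ blow-up: normalize $f_\epsilon$ so that $\sup|R_\epsilon^\delta f_\epsilon|=1$ is attained at $p_\epsilon$, and split into three regimes according to whether $R_\epsilon(p_\epsilon)$ stays bounded below, is $O(\epsilon)$, or lies strictly between. In the second regime one rescales to $Z$ and rules out concentration with the maximum principle (no nonzero harmonic function on $Z$ decaying at infinity); in the third one rescales to the infinite cone $C_N$ and rules it out by the absence of harmonic functions with the prescribed $s^{-\delta}$ decay when $\delta$ avoids the indicial roots. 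The other direction and the multiplicity statement are then quoted from Rowlett. By contrast, you run a min--max argument entirely in $L^2/H^1$: the upper bound by transplanting Friedrichs eigenfunctions of $\Omega_0$ into $\Omega_\epsilon$ via a capacity cutoff, and the lower bound by Rellich plus an $L^2$--tightness estimate (radial energy inequality on the exact conic collar, then a fixed trace--Poincar\'e on the rescaled compact core of $Z$). Your route is more self-contained (it handles both inequalities and the multiplicities at once), avoids the indicial-root and maximum-principle inputs, and in fact uses nothing special about $n\ge5$; the paper's blow-up route is closer to the geometry, gives pointwise control of eigenfunctions, and is the natural template when one later wants rates or wants to identify which self-adjoint extension is selected in more degenerate situations.

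One small correction: your sentence ``since $\int^\delta r^{1-n}\,dr<\infty$ for $n\ge3$, this bounds $\psi(r)$ on the collar'' is not right as stated, because $\int_0^\delta r^{1-n}\,dr$ diverges for $n\ge2$. What your inequality actually gives is $\psi(r)^{1/2}\le\psi(\delta)^{1/2}+C\bigl(\int_r^\delta s^{1-n}\,ds\bigr)^{1/2}$, hence $\psi(r)\lesssim \psi(\delta)+r^{2-n}$ (with a $\log$ for $n=2$); this is the estimate you should record. Then $\int_{\epsilon/2}^\delta r^{n-1}\psi(r)\,dr\lesssim \delta^n\psi(\delta)+\delta^2$, which is the tightness you want, and feeding $\psi(\epsilon/2)\lesssim\epsilon^{2-n}$ into the rescaled trace--Poincar\'e on $Z\cap\{r\le1/2\}$ yields $\int_{r\le\epsilon/2}|u|^2\lesssim\epsilon^2$ as you claim. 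With that adjustment the argument goes through.
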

Rowlett also proves this result for $n\geq 3$ under some additional hypotheses
\cite{row2}.
The study of spectral convergence 
has been refined further by Ann\'e and Takahashi in \cite{at}.

It turns out to be significantly harder to analyze the behavior of
more complicated spectral invariants, such as the heat trace and determinant, 
under conic degeneration. For example, based on Theorem \ref{specconv}, one expects that
the heat trace $H^{\Omega_{\epsilon}}(t)$ should
converge as $\epsilon$ goes to zero for any fixed positive $t$. However, this
convergence is not easy to prove, because 
the convergence in Theorem \ref{specconv} is not uniform in $i$. In general, heat trace convergence has only been studied under restrictive curvature assumptions; for example,
if the sectional curvatures of $\Omega_{\epsilon}$ were uniformly bounded below (which is equivalent to the assumption that $Z$ has non-negative sectional curvature), it would follow from \cite{d}. The assumption of non-negative sectional curvature is, however, quite strong, and we wish to avoid it.

Mazzeo and Rowlett, in \cite{mr}, study a closely related problem: that
of degeneration of smoothly bounded planar domains in $\mathbb R^{2}$ to
domains with corners. This is illustrated in Figure \ref{polygons}.
In this setting, with Dirichlet boundary conditions, spectral
convergence holds, and as shown in \cite{mr}, 
it is also true that for each positive $t$,
\begin{equation}\label{htconv}
Tr H^{\Omega_{\epsilon}}(t)\rightarrow Tr H^{\Omega_{0}}(t).
\end{equation}

\begin{figure}\label{polygons}
\centering
\includegraphics[scale=0.75]{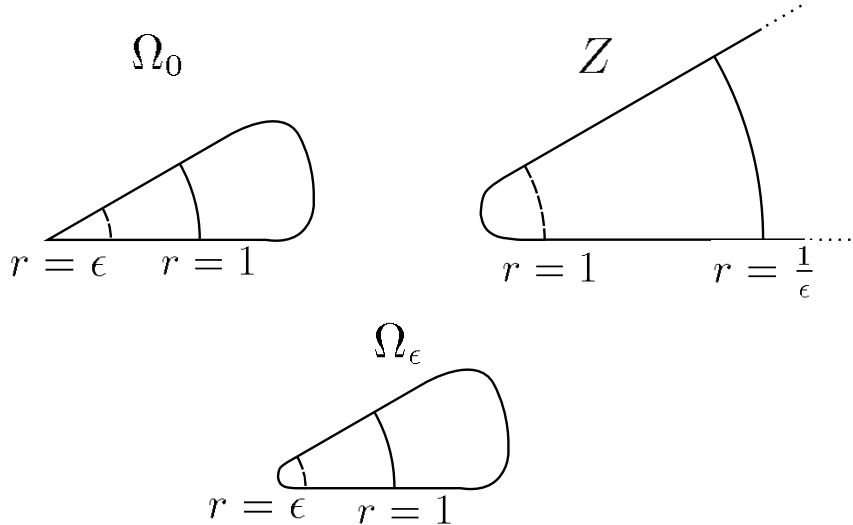}
\caption{Polygonal degeneration of Mazzeo-Rowlett.}
\end{figure}

However, the fine structure of $Tr H^{\Omega_{\epsilon}}(t)$ near
$t=\epsilon=0$ is more complicated; (\ref{htconv}) is not uniform in $t$. 
For each $\epsilon>0$, the usual short-time heat asymptotics imply that
there are $a_{k,\epsilon}$ such that
\[Tr H^{\Omega_{\epsilon}}(t)=a_{0,\epsilon}t^{-1}+a_{1,\epsilon}
t^{-1/2}+a_{2,\epsilon}+\mathcal O(t).\]
A similar expansion holds for $Tr H^{\Omega_{0}}(t)$, with coefficients $a_{k,0}$. As $\epsilon\rightarrow 0$, Mazzeo and Rowlett show that
$a_{0,\epsilon}\rightarrow a_{0,0}$ and $a_{1,\epsilon}\rightarrow a_{1,0}$,
but that
\[a_{2,\epsilon}\rightarrow a_{2,0}+
\sum_{j=1}^{k}(\frac{\pi^{2}-\alpha_{j}^{2}}{24\pi\alpha_{j}}
-\frac{\pi-\alpha_{j}}{12\pi}),\]
where the sum is taken over all interior angles $\alpha_{j}$ of $\Omega_{0}$
\cite{mr}. This anomaly in the heat trace asymptotics was first discovered
by Fedosov \cite{fed} in a context unrelated to degeneration.
Its expression was first simplified by Ray; Ray's work is unpublished,
but a clear explanation may be found in a paper of van den Berg and
Srisatkunarajah \cite{bs}.

Mazzeo and Rowlett analyze the asymptotic structure of 
$Tr H^{\Omega_{\epsilon}}(t)$ for small $\epsilon$ and $t$ and use this structure to explain
the non-uniformity in the heat trace asymptotics. Their work motivates our
approach to analyzing the determinant under conic degeneration; many of the
features in this setting, including the non-uniformity in the asymptotics,
are mirrored in our case.

The behavior of the heat kernel on our degenerating family of manifolds 
has also been studied by Rowlett \cite{row}, who outlines a direct construction for the heat kernel 
on $\Omega_{\epsilon}$ in a slightly more general setting known as 'asymptotically conic convergence.' This construction uses a variant of the
analytic surgery technique of McDonald. Unfortunately, certain details in \cite{row} do not seem to be correct. One lesson from this construction is that the behavior of the full heat kernel under conic degeneration is quite complicated. In the present work, we introduce a less involved approach which focuses only on the heat trace. Since the zeta function only depends on the heat trace and not on the off-diagonal heat kernel, our approach suffices for the analysis of the determinant.

\subsection{Main results}

Many of the proofs in the present work depend on the asymptotic structure of the heat kernel on $Z$, which is analyzed in \cite{s1}. In particular, we prove in \cite{s1} that it is possible to define a renormalized zeta function and determinant of the Laplacian on $Z$, denoted $^{R}\zeta_{Z}$ and $^{R}\det\Delta_{Z}$ respectively. Although $^R\zeta_Z(s)$ may have a pole at $s=0$, we let $^R\zeta_Z(0)$ be the term of order 0 in the Laurent series expansion of $^R\zeta_Z(s)$ at $s=0$; recall from \cite{s1} that $-\log^{R}\det\Delta_Z$ is defined to be the term of order $s$ in the same Laurent series. We then have the following approximation formula for the determinant under conic degeneration:
\begin{theorem}\label{detapprox}
As $\epsilon\rightarrow 0$,
\[\log\det\Delta_{\Omega_{\epsilon}}= \frac{1}{2}(\int_N u_n(1,y)\ dy)(\log\epsilon)^2-2\log\epsilon(^{R}\zeta_{Z}(0))
+\log\det\Delta_{\Omega_{0}}+\log^{R}\det\Delta_{Z}+o(1).\] \end{theorem}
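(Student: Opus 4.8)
The plan is to pass through the heat trace and the zeta function, exactly as outlined in the abstract. The first stage is to establish a precise asymptotic description of $Tr\, H^{\Omega_\epsilon}(t)$ that is uniform enough in the region where both $t$ and $\epsilon$ are small. Concretely, I would build a parametrix for the heat kernel on $\Omega_\epsilon$ by gluing three model pieces: the heat kernel on the smooth (and $\epsilon$-independent) part of $\Omega_0$ away from the cone tip, the heat kernel on the rescaled manifold $\epsilon Z$ (which governs the region $r \lesssim \epsilon$), and the heat kernel on the exact cone $C_N$ (which governs the intermediate ``conic'' region $\epsilon \ll r \ll 1$). Using cutoff functions adapted to these three regimes, one assembles an approximate heat kernel, estimates the error produced by the Laplacian hitting the cutoffs, and removes it by the usual Duhamel/Neumann-series argument. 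Taking the trace, the three contributions combine; the contribution from the $\epsilon Z$ piece, after rescaling $t \mapsto \epsilon^2 t$, is controlled by the heat trace asymptotics on $Z$ analyzed in \cite{s1}, and the conic piece contributes the terms involving $u_n$ and the cross-sectional data on $N$. The output of this stage should be an expansion for $Tr\, H^{\Omega_\epsilon}(t) - 1$ with explicit $\epsilon$-dependence in the coefficients, valid uniformly for $t$ in compact subsets of $(0,\infty)$ and with controlled behavior as $t \to 0$ and $t \to \infty$, together with the key fact that the leading small-$t$ coefficients converge to those of $\Omega_0$ while the constant term picks up an $\epsilon$-dependent anomaly.

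The second stage is to feed this into the Mellin transform formula $\zeta_{\Omega_\epsilon}(s) = \frac{1}{\Gamma(s)}\int_0^\infty (Tr\, H^{\Omega_\epsilon}(t) - 1) t^{s-1}\, dt$ and track how $\zeta'_{\Omega_\epsilon}(0)$ depends on $\epsilon$. I would split the integral at, say, $t = 1$ (or better, at a scale like $t = \epsilon^2$ and $t=1$, giving three ranges matching the three regimes above). On the range $t \geq 1$ the integrand converges nicely to the $\Omega_0$ value, contributing $\log\det\Delta_{\Omega_0}$ in the limit plus $o(1)$. On the small-$t$ range the $\epsilon$-dependence of the heat coefficients is what produces the nontrivial terms: the crucial point is that the constant heat coefficient $a_{2,\epsilon}$ (or its analog in dimension $n$) does not converge to $a_{2,0}$ but rather differs by a quantity built from $\int_N u_n(1,y)\, dy$ and $^R\zeta_Z(0)$, and the interaction of this with the $1/\Gamma(s)$ factor near $s=0$ — where $\Gamma(s)^{-1} \sim s + \gamma s^2 + \cdots$ — is exactly what generates the $(\log\epsilon)^2$ and $\log\epsilon$ terms upon differentiating at $s=0$. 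Careful bookkeeping of these Laurent expansions, together with the definitions of $^R\zeta_Z(0)$ and $^R\det\Delta_Z$ as the order-$0$ and order-$s$ coefficients in the relevant Laurent series, should assemble precisely the four terms in the statement.

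The main obstacle, and the heart of the argument, is the first stage: showing that the parametrix error is genuinely negligible \emph{uniformly in $\epsilon$} after taking the trace and integrating against $t^{s-1}$. The difficulty is that the three model regions live at different scales ($r \sim 1$, $r \sim \epsilon$, and the scale-invariant conic region in between), and the natural estimates in each region degenerate as $\epsilon \to 0$; one must check that the gluing errors, which are supported in the overlap annuli, decay fast enough in $t$ (and are small enough in $\epsilon$) that their contribution to $\zeta'_{\Omega_\epsilon}(0)$ is $o(1)$. This is where a blow-up / manifold-with-corners picture of the space $\{(t,\epsilon,z,z')\}$ is likely needed to organize the estimates, in the spirit of the analytic-surgery constructions referenced above; avoiding the full off-diagonal heat kernel (as the paper emphasizes) simplifies this considerably, since only the restriction to the diagonal and its integral are required. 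A secondary technical point is justifying the interchange of the $\epsilon \to 0$ limit with the meromorphic continuation in $s$, i.e. proving the relevant convergence holds in a neighborhood of $s=0$ and not merely pointwise; this follows once the heat-trace expansion is established with uniform remainder estimates.
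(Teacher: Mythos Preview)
Your overall strategy---parametrix for the heat kernel, pass to the trace, feed into the Mellin transform, track Laurent coefficients near $s=0$---matches the paper. But your parametrix construction differs in an important way, and the difference is not cosmetic.

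You propose \emph{three} model pieces: the smooth part of $\Omega_0$, the rescaled piece $\epsilon Z$ for $r\lesssim\epsilon$, and the exact cone $C_N$ for the intermediate region $\epsilon\ll r\ll 1$. The paper uses only \emph{two}: $\epsilon Z$ for $r\le 1$ and $\Omega_0$ for $r\ge 1$, glued at the single fixed annulus $r\sim 1$. The point is that $\epsilon Z$ is already exactly conic for $r\ge\epsilon/2$, so it models the entire conic region up to $r\sim 1$; there is no need for a separate $C_N$ piece. This is not just a simplification of bookkeeping: with two pieces the cutoff functions are $\epsilon$-independent and the gluing error lives at a fixed spatial scale, so the ``uniformly in $\epsilon$'' estimates you correctly flag as the main obstacle become much more tractable (Lemmas~\ref{errorlemma}--\ref{decay} in the paper). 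Your three-piece version would force a second gluing near $r\sim\epsilon$, where the error is supported on a shrinking annulus, and controlling that uniformly is exactly the kind of difficulty the paper's construction is designed to avoid.

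On the zeta side, your plan is correct in outline but misses one device the paper relies on. Rather than splitting only at $t=1$ and $t=\epsilon^2$, the paper splits the short-time integral at $\sqrt t=b\epsilon$ for an \emph{arbitrary} $b>0$, computes all contributions explicitly (they involve $b$), and then uses that the final answer must be independent of $b$ to force several coefficients to vanish. This $b$-trick is what eliminates a number of awkward cross-terms (e.g.\ the quantity $-a_n+\tilde a_n-Ll_{\log}+f_\infty$) without having to identify them directly. Your sketch of ``the constant heat coefficient picks up an anomaly'' is morally right, but the actual mechanism in the paper routes everything through the decomposition of Theorem~\ref{structure} and the renormalized heat trace on $Z$, not through a direct comparison of $a_{n,\epsilon}$ with $a_{n,0}$.
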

Here $u_n(1,y)$ is the coefficient of $t^0$ in the local heat asymptotics for the infinite cone $C_N$ at the point $(1,y)$; it is identically zero if $n$ is odd. Note that the coefficient of $(\log\epsilon)^2$ is $-1$ times the coefficient of $\log t$ in the heat trace asymptotics of Cheeger for manifolds with conic singularities \cite{ch2}.

To prove Theorem \ref{detapprox}, we use the representation of the zeta
function in terms of the heat trace, and then study the
precise microlocal structure of the heat
trace on $\Omega_{\epsilon}$ as a function of $t$ and $\epsilon$. 
We do not use Rowlett's analytic surgery
approach; rather, we perform a direct parametrix construction for the
heat kernel of $\Omega_{\epsilon}$, which gives us enough information
to pass to the heat trace. The key structure theorem we prove is motivated by the work of
Mazzeo and Rowlett in \cite{mr}. Let $Q$ be the quadrant $\mathbb R_+(\sqrt t)\times\mathbb R_+(\epsilon)$, and then let $Q_0$ be $[Q;\{(0,0)\}]$; that is, $Q$ with a radial blow-up at $(0,0)$ in the coordinates $(\sqrt t,\epsilon)$. The space $Q_0$ is illustrated in Figure \ref{qnaught}, and its boundary faces are labeled L, F, and R as illustrated. Let $\chi_{1}$ be a smooth radial cutoff function on $\Omega_{\epsilon}$, equal to 1 on $r\leq 15/16$ and 0 on $r\geq 17/16$ and non-increasing in $r$, and let $\chi_{2}=1-\chi_{1}$. Then we have the following:

\begin{figure}
\centering
\includegraphics[scale=0.70]{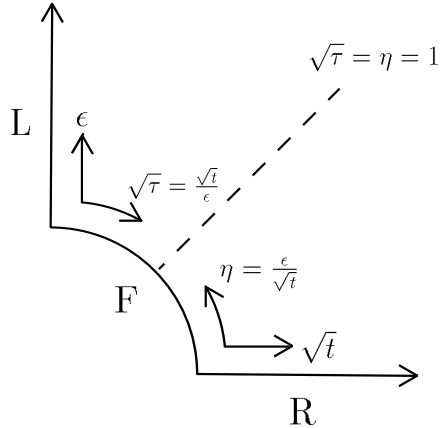}
\caption{The blown-up space $Q_0$.}\label{qnaught}
\end{figure}

\begin{theorem}\label{structure} $Tr H^{\Omega_{\epsilon}}(t)$
is polyhomogeneous conormal on $Q_{0}$, with leading orders $-n$ at L and F (in terms of $\sqrt t$) and leading order $0$ at R. Moreover, we have
\begin{equation}\label{structureeq}Tr H^{\Omega_{\epsilon}}(t)=
\int_{Z}\chi_1(\epsilon z)H^Z(\frac{t}{\epsilon^2},z,z)\ dz+\int_{\Omega_0}\chi_2(z)H^{\Omega_0}(t,z,z)\ dz+R(\epsilon,t),
\end{equation}
where $R(\epsilon,t)$ is polyhomogeneous conormal on $Q$ with infinite-order decay at $t=0$.
\end{theorem}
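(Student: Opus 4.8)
The plan is to construct a gluing parametrix for the heat kernel of $\Omega_\epsilon$ out of the two model heat kernels $H^{\epsilon Z}$ and $H^{\Omega_0}$, invert it by a Duhamel--Neumann series, and then pass to the trace. First I fix auxiliary radial cutoffs $\psi_1,\psi_2$ on $\Omega_\epsilon$ with $\psi_i\equiv 1$ on $\operatorname{supp}\chi_i$, chosen so that $\psi_1$ is supported where $\Omega_\epsilon$ is isometric to $\epsilon Z$, $\psi_2$ is supported where $\Omega_\epsilon$ is isometric to $\Omega_0$, and $\operatorname{supp}\nabla\psi_i$ is separated from $\operatorname{supp}\chi_i$ by a fixed positive distance; this is possible since $\operatorname{supp}\chi_1\subset\{r\le 17/16\}$ while the conic collar persists out to $r=2$. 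Set
\[
G(\epsilon,t,z,z')=\psi_1(z)H^{\epsilon Z}(t,z,z')\chi_1(z')+\psi_2(z)H^{\Omega_0}(t,z,z')\chi_2(z').
\]
Then $G(\epsilon,0)=\delta$, and since each model solves its own heat equation on the support of the corresponding $\psi_i$,
\[
E(\epsilon,t):=(\partial_t+\Delta_{\Omega_\epsilon})G=[\Delta,\psi_1]H^{\epsilon Z}\chi_1+[\Delta,\psi_2]H^{\Omega_0}\chi_2,
\]
which is supported in a fixed compact collar, bounded away from the shrinking conic region, and off the diagonal by a fixed positive distance. Putting $z=z'$, using $\psi_i\chi_i=\chi_i$, the defining isometries, and the scaling identity $H^{\epsilon Z}(t,z,z')=\epsilon^{-n}H^Z(t/\epsilon^2,z/\epsilon,z'/\epsilon)$, one sees that $\int_{\Omega_\epsilon}G(\epsilon,t,z,z)\,dz$ equals precisely the first two terms on the right-hand side of (\ref{structureeq}). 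So the theorem reduces to showing that $R(\epsilon,t):=Tr\,H^{\Omega_\epsilon}(t)-\int_{\Omega_\epsilon}G(\epsilon,t,z,z)\,dz$ is polyhomogeneous conormal on $Q$ and $O(t^\infty)$ at $t=0$, and that the full trace is polyhomogeneous on $Q_0$ with the stated leading orders.

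Next I would invert $G$ through $H^{\Omega_\epsilon}=\sum_{k\ge 0}(-1)^k G\circledast E^{\circledast k}$, where $\circledast$ is Duhamel convolution. Because $E(s)$ is supported off the diagonal at a fixed distance and away from the conic scale, off-diagonal Gaussian bounds for $H^{\epsilon Z}$ and for $H^{\Omega_0}$ away from the cone point, which are uniform in $\epsilon$ on the regions in play, give $\|E(s)\|_{\mathcal C^0}\le C_N s^N$ for every $N$ on $0<s\le1$, with rapid decay as $s\to\infty$; together with uniform $L^2$ and Schur bounds on $G(s)$ this makes the series converge, uniformly in $\epsilon$, to the heat kernel of $\Omega_\epsilon$. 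Tracing gives $R=\sum_{k\ge 1}(-1)^k Tr(G\circledast E^{\circledast k})$, and since every summand contains at least one factor $E$ and all of its time arguments lie in $[0,t]$, a routine estimate with the above bounds yields $R(\epsilon,t)=O(t^\infty)$ as $t\to 0$, uniformly in $\epsilon$.

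The heart of the argument, and the main obstacle, is to show that $R$ is polyhomogeneous conormal jointly in $(\sqrt t,\epsilon)$ on $Q$, with no blow-up of the corner required. The factor $[\Delta,\psi_2]H^{\Omega_0}\chi_2$ carries no $\epsilon$-dependence, so all $\epsilon$-dependence in the iterated convolutions and the trace integral is funneled through factors $H^{\epsilon Z}$ whose arguments lie in fixed compact collars; by the scaling identity these are $H^Z(s/\epsilon^2,\cdot,\cdot)$ with arguments far out in the conic end of $Z$, at radius of order $1/\epsilon$. Here I would invoke the polyhomogeneous structure of the heat kernel on $Z$ established in \cite{s1}, in particular its behavior as the radial variables tend to infinity and for all positive time, to conclude that these factors are polyhomogeneous conormal in $(\sqrt s,\epsilon)$ on $Q$, and that the rescalings $t\mapsto t/\epsilon^2$ and $z\mapsto z/\epsilon$ cooperate rather than forcing a corner blow-up, precisely because the relevant arguments are separated off the diagonal. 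The Duhamel convolutions (integrations over time simplices) and the trace integral over $\Omega_\epsilon$ are then pushforwards of polyhomogeneous conormal densities along maps with compact fibres, and hence preserve polyhomogeneity by Melrose's pushforward theorem; the quantitative bounds above give convergence of the $k$-series in the polyhomogeneous topology. This establishes the decomposition (\ref{structureeq}) with $R$ as claimed.

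It remains to deduce that $Tr\,H^{\Omega_\epsilon}(t)$, being the sum of the two explicit terms and $R$, is polyhomogeneous on $Q_0$ with the stated leading orders. The remainder $R$ is polyhomogeneous on $Q$ and vanishes to infinite order at $t=0$, so it lifts to $Q_0$ and contributes nothing at the leading orders. The term $\int_{\Omega_0}\chi_2(z)H^{\Omega_0}(t,z,z)\,dz$ is $\epsilon$-independent and, being supported away from the cone point, has an ordinary short-time heat expansion beginning at $(\sqrt t)^{-n}$ with no logarithmic terms, giving leading order $-n$ at L and F and order $0$ at R. The term $\int_Z\chi_1(\epsilon z)H^Z(t/\epsilon^2,z,z)\,dz$, analyzed through \cite{s1} and the substitution $\tau=t/\epsilon^2$ --- the cutoff at $Z$-radius $1/\epsilon$ being exactly what produces the front face F --- has leading order $-n$ at L and F (from the $(4\pi\tau)^{-n/2}$ term together with the scaling) and, as $\epsilon\to 0$ with $t$ fixed, converges to $\int_{\Omega_0}\chi_1(z)H^{\Omega_0}(t,z,z)\,dz$, so that it has order $0$ at R. Summing the three pieces yields leading orders $-n$ at L and F and $0$ at R.
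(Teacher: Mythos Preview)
Your overall architecture matches the paper's exactly: a gluing parametrix $G$, Duhamel--Neumann inversion $H^{\Omega_\epsilon}=G-G*K$, identification of $Tr\,G$ with the two explicit integrals in (\ref{structureeq}), and reduction to showing that the remainder is phg on $Q$ with infinite-order decay at $t=0$. That skeleton is correct. But two of the load-bearing steps are genuinely glossed over.

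First, the polyhomogeneity of $R$ on $Q$ is not a routine consequence of ``pushforward along maps with compact fibres.'' The convolution $E_\epsilon*G_\epsilon$ contains terms like
\[
\int_0^t\int H^Z\!\Big(\tfrac{t-s}{\epsilon^2},\tfrac{z}{\epsilon},\tfrac{z''}{\epsilon}\Big)\,\chi_1(z'')\,H^Z\!\Big(\tfrac{s}{\epsilon^2},\tfrac{z''}{\epsilon},\tfrac{z'}{\epsilon}\Big)\,dz''\,ds,
\]
and here $z''$ can approach $z'$ (only the first factor is off-diagonal), while $s/\epsilon^2$ and $(t-s)/\epsilon^2$ each range from small to large depending on where $s$ sits relative to $\epsilon^2$, $t/2$, $t-\epsilon^2$. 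In each regime a different structure theorem for $H^Z$ applies (short-time on $Z^2_{sh}$ versus long-time on $Z^2_{w,sc}$), and integration in $z''$ and $s$ is only a b-fibration after the domain has been blown up to match. The paper spends its entire appendix (Lemma \ref{technicallemma}) on this: four integrals, each split into a small-$\tau$ and large-$\tau$ case, the large-$\tau$ case further into regions I--IV, with explicit identification of the b-fibration in each. Your one sentence invoking the pushforward theorem does not supply any of this. Relatedly, your appeal to ``uniform off-diagonal Gaussian bounds for $H^{\epsilon Z}$'' is unjustified: after rescaling you need control of $H^Z(\tau,\cdot,\cdot)$ at radius $\sim 1/\epsilon$ and time $\tau$ up to order $1/\epsilon^2$, exactly the regime where Gaussian estimates are unavailable and the phg structure of \cite{s1} (Theorem \ref{maincor} here) must be used directly, as in the paper's Lemma \ref{errorlemma}.

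Second, your argument for leading order $0$ at R of the $\epsilon Z$ term is circular. You assert that $\int_Z\chi_1(\epsilon z)H^Z(t/\epsilon^2,z,z)\,dz$ converges as $\epsilon\to 0$ to $\int_{\Omega_0}\chi_1(z)H^{\Omega_0}(t,z,z)\,dz$; but this is (a localized version of) heat trace convergence, which is Theorem \ref{htconverg}, and in the paper's logic Theorem \ref{htconverg} is \emph{deduced from} Theorem \ref{structure}, not the other way round. The paper instead proves the order-$0$ behavior at R directly (Lemma \ref{paramphgc}) by pushing forward the diagonal restriction of $H^Z$ from $Z^2_{w,sc}$, and must in fact split $F=F_1+F_2$ using the extra decay at bf$_0$ of the zf leading term to avoid a spurious $\log\epsilon$ from an extended union.
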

\begin{remark} Polyhomogeneous conormal distributions (which we sometimes abbreviate ``phg" or ``phg conormal") on manifolds with corners were originally introduced by Melrose; good introductions may be found in \cite[Chapter 5]{me}, \cite[Ch. 2A]{ma}, or \cite{gri}. They should be thought of as generalizations of smooth functions, where negative and/or fractional exponents, as well as some logarithmic terms, are allowed in the asymptotic expansions at the boundaries.
\end{remark}

Theorem \ref{structure} both describes the asymptotic structure of the heat trace and identifies all the terms in the asymptotics at L and F. On the other hand, it does not identify the leading-order term in the asymptotics of the heat trace at R, which is necessary for the proof of Theorem \ref{detapprox}. We therefore need the following heat trace convergence result as well:

\begin{theorem}\label{htconverg} As $\epsilon\rightarrow 0$, 
for each fixed positive $t$, 
\[Tr H^{\Omega_{\epsilon}}(t)\rightarrow Tr H^{\Omega_{0}}(t).\] 
Moreover, as $t\rightarrow\infty$,
$Tr H^{\Omega_{\epsilon}}(t)-1$ decays exponentially in $t$,
uniformly in $\epsilon$. 
\end{theorem}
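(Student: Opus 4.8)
The plan is to reduce both assertions to a single uniform spectral estimate: a Weyl-type upper bound
\[
N_\epsilon(\lambda) := \#\{i : \lambda_{\epsilon,i} \le \lambda\} \le C(\lambda^{n/2} + 1), \qquad \text{for all }\lambda \ge 0 \text{ and } 0 < \epsilon \le \epsilon_0,
\]
with $C$ and $\epsilon_0$ independent of $\epsilon$. Granting this, the first assertion follows by dominated convergence for series: the bound gives $\lambda_{\epsilon,i} \ge c\,i^{2/n}$ for all $i$ above an $\epsilon$-independent threshold, so for each fixed $t > 0$ the terms $e^{-\lambda_{\epsilon,i}t}$ are dominated term by term by an $\epsilon$-independent summable sequence; since $\lambda_{\epsilon,i} \to \lambda_{0,i}$ for every $i$ by Theorem \ref{specconv}, we may pass to the limit in $Tr H^{\Omega_\epsilon}(t) = \sum_i e^{-\lambda_{\epsilon,i}t}$ to obtain $Tr H^{\Omega_0}(t)$. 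For the second assertion, Theorem \ref{specconv} also gives a uniform spectral gap $\lambda_{\epsilon,2} \ge c_0 > 0$ for $\epsilon$ small (and $\lambda_{\epsilon,2}$ is bounded below on compact $\epsilon$-intervals by continuity), so for $t \ge 1$,
\[
Tr H^{\Omega_\epsilon}(t) - 1 = \sum_{i \ge 2} e^{-\lambda_{\epsilon,i}t} \le e^{-c_0(t-1)}\sum_{i \ge 1} e^{-\lambda_{\epsilon,i}} = e^{-c_0(t-1)}\,Tr H^{\Omega_\epsilon}(1),
\]
and the Weyl bound gives $Tr H^{\Omega_\epsilon}(1) = \int_0^\infty e^{-\lambda}\,dN_\epsilon(\lambda) = \int_0^\infty N_\epsilon(\lambda)e^{-\lambda}\,d\lambda \le C\int_0^\infty(\lambda^{n/2}+1)e^{-\lambda}\,d\lambda$, which is bounded uniformly in $\epsilon$. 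This is the claimed uniform exponential decay.

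It remains to prove the uniform counting bound, which I would obtain by Dirichlet--Neumann bracketing adapted to the construction of $\Omega_\epsilon$. Cut $\Omega_\epsilon$ along the two smooth hypersurfaces $\{r = \epsilon\}$ and $\{r = 1\}$; the latter is the gluing locus, with fixed cross-section $(N, dy^2)$. This splits $\Omega_\epsilon$ into three pieces: the rescaled core $\epsilon K$, where $K = Z \cap \{r \le 1\}$ is fixed; the exact truncated cone $C_N \cap \{\epsilon \le r \le 1\}$; and the outer piece $\Omega_0 \cap \{r \ge 1\}$, which does not depend on $\epsilon$. Neumann bracketing yields
\[
N_\epsilon(\lambda) \le N^{\mathrm{Neu}}_{\epsilon K}(\lambda) + N^{\mathrm{Neu}}_{C_N \cap \{\epsilon \le r \le 1\}}(\lambda) + N^{\mathrm{Neu}}_{\Omega_0 \cap \{r \ge 1\}}(\lambda).
\]
The third term is fixed and obeys the usual Weyl bound. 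For the core, scaling gives $N^{\mathrm{Neu}}_{\epsilon K}(\lambda) = N^{\mathrm{Neu}}_{K}(\epsilon^2\lambda) \le C_K((\epsilon^2\lambda)^{n/2}+1) \le C_K(\lambda^{n/2}+1)$ uniformly in $\epsilon \le 1$. The remaining term is the crux: I would estimate it via $N^{\mathrm{Neu}}_{C_N \cap \{\epsilon \le r \le 1\}}(\lambda) \le e\,Tr H^{\mathrm{Neu}}_{C_N \cap \{\epsilon \le r \le 1\}}(1/\lambda)$ and bound the truncated-cone heat trace by the free conic heat kernel plus reflection terms off the two boundary spheres, using that $\int_{C_N \cap \{r \le 1\}} H^{C_N}(s,w,w)\,dV$ is finite with small-$s$ behavior $\lesssim s^{-n/2}$ (Cheeger's analysis of the conic heat trace near the tip, \cite{ch2}) and that the boundary spheres have areas $\mathrm{vol}(N)$ and $\epsilon^{n-1}\mathrm{vol}(N) \le \mathrm{vol}(N)$, so the boundary contributions are $\lesssim s^{-(n-1)/2}$ uniformly in $\epsilon$. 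This gives $Tr H^{\mathrm{Neu}}_{C_N \cap \{\epsilon \le r \le 1\}}(s) \lesssim s^{-n/2}+1$ uniformly in $\epsilon$, and hence the uniform counting bound.

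The main obstacle is this last estimate on the degenerating conic annulus: as $\epsilon \to 0$ it exhausts the genuinely singular model $C_N \cap \{r \le 1\}$, the curvature near the inner boundary $\{r = \epsilon\}$ is unbounded, and the naive Weyl heuristic (volume times $s^{-n/2}$) does not by itself control the reflection term at $\{r = \epsilon\}$; one has to exploit the exact dilation invariance of the cone together with the integrability of the conic heat-trace density at the tip. I should add that, if one is willing to invoke Theorem \ref{structure}, the uniform counting bound is essentially free: polyhomogeneity on $Q_0$ with leading order $-n$ at the faces L and F (in $\sqrt t$) forces $Tr H^{\Omega_\epsilon}(t) \lesssim t^{-n/2}$ uniformly in $\epsilon$ for small $t$, whence $N_\epsilon(\lambda) \le e\,Tr H^{\Omega_\epsilon}(1/\lambda) \lesssim \lambda^{n/2}+1$; but the bracketing argument above is self-contained and, unlike earlier heat-trace convergence results, needs no curvature hypothesis.
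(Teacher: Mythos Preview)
Your high-level strategy---prove a uniform Weyl-type bound $N_\epsilon(\lambda)\le C(\lambda^{n/2}+1)$, convert it to $\lambda_{\epsilon,k}\ge c\,k^{2/n}$, and then combine with spectral convergence (dominated convergence for the series) and the uniform spectral gap---is exactly what the paper does. The paper's proof of both conclusions in Section~2.3 is line-for-line your argument; the only difference is how the Weyl bound is obtained.

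The paper takes the route you mention at the end of your proposal: it invokes Theorem~\ref{structure} to get $Tr\,H^{\Omega_\epsilon}(t)\le Ct^{-n/2}$ uniformly (Lemma~\ref{auxiliaryone}), and then derives the eigenvalue lower bound (Lemma~\ref{eigenbound}) by the standard $N_\epsilon(\lambda)e^{-\lambda t}\le Tr\,H^{\Omega_\epsilon}(t)$ trick. Your Neumann bracketing alternative is a genuinely different and more classical route, and would make the argument independent of the microlocal machinery. However, the step you flag as the ``main obstacle'' is indeed a gap as written: the ``free conic heat kernel plus reflection terms'' heuristic does not give a rigorous upper bound for a \emph{Neumann} heat trace, since Neumann heat kernels are not dominated by the free kernel (that monotonicity goes the right way only for Dirichlet). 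To make the annulus estimate rigorous you would have to separate variables on the exact cone and count radial Neumann eigenvalues mode by mode, uniformly as the inner radius $\epsilon\to 0$, or else run a quantitative not-feeling-the-boundary argument that handles the highly curved inner sphere. The paper itself remarks that a classical proof of the eigenvalue bound ``is much more involved'' and opts for the Theorem~\ref{structure} shortcut; your proposal is consistent with that assessment.

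One minor point: your indexing has $\lambda_{\epsilon,1}=0$ and the spectral gap at $\lambda_{\epsilon,2}$, whereas the paper indexes the positive spectrum starting at $i=1$; this is harmless but worth aligning.
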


The natural follow-up to Theorem \ref{detapprox} 
would be to obtain a similar determinant
approximation formula for the Laplacian acting on differential forms, preferably
also on sections of twisted form bundles. This would enable us to derive an
approximation formula for the analytic torsion. If we had a similar approximation
formula for the Reidemeister torsion, we could hope to apply the Cheeger-Muller
theorem on each $\Omega_{\epsilon}$ and then take a limit as $\epsilon
\rightarrow 0$ to obtain a Cheeger-Muller theorem for $\Omega_{0}$. This is
ongoing work.

\subsection{Outline of the proof}

The proofs proceed in two steps: first we assume Theorem \ref{structure} and prove the rest of the results, and then we return to prove Theorem \ref{structure}.
In section 2, we extend Theorem \ref{specconv} to cover dimensions
2, 3, and 4. We then prove a uniform lower bound on the eigenvalues of
$\Delta_{\Omega_{\epsilon}}$. Combining these two results with
Theorem \ref{structure} allows us to prove Theorem \ref{htconverg}.
Then, in section 3, we use Theorem \ref{structure} and Theorem \ref{htconverg} to prove Theorem \ref{detapprox}. We do this directly, by using the heat trace to construct the zeta
function and then analyzing the derivative at zero. This analysis is an example of how geometric microlocal structure theorems such as Theorem \ref{structure} may be powerfully combined with geometric information such as Theorems \ref{specconv} and \ref{htconverg}.

Section 4 is devoted to the proof of Theorem \ref{structure}. We construct
a parametrix for the heat equation on $\Omega_{\epsilon}$ 
by smoothly gluing together the heat kernels on $\epsilon Z$ for
$r\leq 1$ and on $\Omega_{0}$ for $r\geq 1$. The heat kernel on $\epsilon Z$
is a parabolic scaling of the heat kernel on $Z$, which
explains why we need to understand the long-time heat kernel on $Z$ as well
as the short-time heat kernel. 
We combine the results in \cite{s1} describing the structure
of the heat kernel on $Z$ with the results in \cite{mo} on the heat kernel on manifolds with conic singularities to prove Theorem \ref{structure}.
This section involves extensive use of geometric microlocal analysis, in particular Melrose's pushforward theorem, and is the most technical part of this work. For the necessary background on geometric microlocal analysis, see \cite{me,me2,gri,s,s1}. The Appendix contains the proof of one particularly technical result.

\subsection{Acknowledgements}
This work comprises the second part of my Stanford Ph.D. thesis \cite{s}. First and foremost, I would like to thank my advisor, Rafe Mazzeo, who introduced me to this problem and shared tremendous advice and support. Special thanks are also due to Colin Guillarmou for helpful comments and bug-spotting. Among the many other people who provided insight, I would like to particularly mention Xianzhe Dai, Leonid Friedlander, Andrew Hassell, and Julie Rowlett. I also wish to thank the anonymous referee for useful comments and suggestions. Finally, I am grateful to Gilles Carron and the Universit\'e de Nantes for their hospitality during fall 2010, and to the ARCS foundation for their financial support during the 2011-2012 academic year.

\section{Heat trace convergence}

In this section, we prove Theorem \ref{htconverg}. Our strategy is to combine the spectral convergence of \cite{ma2},
\cite{row}, and \cite{at} with a uniform lower bound on the eigenvalues.
The uniform lower bound will allow us to control the remainders in
the infinite sum
\[Tr H^{\Omega_{\epsilon}}(t)=\sum_{k=0}^{\infty}e^{-\lambda_{\epsilon,k}t}.\]

\subsection{Spectral convergence}

As discussed in \cite{row2}, the spectral convergence result of Theorem \ref{specconv} has only been demonstrated when $n\geq 5$. We must therefore extend Theorem \ref{specconv} to the cases where $2\leq n\leq 4$.  Our approach is the same as that in \cite{ma2} and \cite{row}. The key is to prove that if we have a sequence of eigenvalues $\lambda_{\epsilon}$ of 
$\Delta_{\Omega_{\epsilon}}$ which converge to $\bar{\lambda}$, then
$\bar{\lambda}$ is an eigenvalue of the Friedrichs extension of
$\Delta_{\Omega_{0}}$. This will show that every accumulation point
of $\sigma(\Delta_{\Omega{\epsilon}})$ is in $\sigma(\Delta_{\Omega_{0}})$.

Assume that for each $\epsilon>0$, $f_{\epsilon}$ is a nonzero eigenfunction
of $\Delta_{\epsilon}$ with eigenvalue $\lambda_{\epsilon}$, normalized so
that $||f_{\epsilon}||_{\infty}=1$. Over any compact set in $\Omega_{0}$
away from the conic tip, 
the Arzela-Ascoli theorem implies that $f_{\epsilon}$ converges
uniformly to a limiting function $\bar f$. By elliptic estimates, the convergence
is actually in $C^{\infty}$, so the limit $\bar f$ is smooth and satisfies 
$\Delta_{\Omega_{0}}\bar f=\bar\lambda\bar f$. 
Note that since each $f_{j}$ is bounded by $1$, we also
have $||\bar f||_{\infty}\leq 1$, which automatically implies that
$\bar f$ is in the Friedrichs domain of $\Delta_{\Omega_{0}}$. 
In order to show that $\bar{\lambda}$ is an eigenvalue of
$\Delta_{\Omega_{0}}$, we must show that $\bar f$ is nonzero.

Following \cite{ma2} and \cite{row}, we define a weight function $R_{\epsilon}$
on $\Omega_{\epsilon}$ for each $\epsilon$. We set $R_{\epsilon}=\epsilon$
for $r\leq\epsilon$, $R_{\epsilon}=r$ for 
$\epsilon\leq r\leq 1$, and $R_{\epsilon}=1$ outside
$r=1$. Let $\delta$ be a small positive number which we will choose later
in the argument. For each $\epsilon$, let $p_{\epsilon}$ be the point
at which the supremum of $|f_{\epsilon}R_{\epsilon}^{\delta}|$ is attained.
Re-normalize the $f_{\epsilon}$ by writing $\varphi_{\epsilon}=
f_{\epsilon}|R_{\epsilon}^{\delta}f_{\epsilon}(p_{\epsilon})|^{-1}$,
so that the supremum of $|R_{\epsilon}^{\delta}\varphi_{\epsilon}|$ is 1 
and it is achieved at $p_{\epsilon}$. 
Note that the $\varphi_{\epsilon}$ are still eigenfunctions
of $\Delta_{\Omega_{\epsilon}}$.

We now analyze the behavior of $R_{\epsilon}(p_{\epsilon})$ as $\epsilon$
approaches zero. Passing to a subsequence, we may assume that we are in
one of three cases.

\medskip

\textbf{Case 1:} Suppose that 
$R_{\epsilon}(p_{\epsilon})$ approaches $R>0$ as $\epsilon$
goes to zero. In this case, some subsequence of $p_{\epsilon}$ approaches some
$\bar p$ in $\Omega_{0}$ away from the conic tip. Arguing as before, the
$\varphi_{\epsilon}$ converge to some function $\bar\varphi$ on compact subsets
away from the conic tip,
and since the renormalizing factor $R_{\epsilon}^{\delta}(p_{\epsilon})$ 
converges to $R^{\delta}$, we have $\bar\varphi=R^{-\delta}\bar f$. Since 
$|R_{\epsilon}^{\delta}\varphi_{\epsilon}(p_{\epsilon})|=1$ for all $\epsilon$,
we must have $|\bar\varphi(p)|=R^{-\delta}$. Therefore $|\bar f(p)|=1$, 
so $\bar f$ is nonzero; we already
showed that $\bar f$ is in the Friedrichs domain, so we conclude that
$\bar\lambda\in\sigma(\Delta_{\Omega_{0}})$.

\medskip

\textbf{Case 2:} Suppose that there exists a constant $C$ so that
$R_{\epsilon}(p_{\epsilon})\leq C\epsilon$ as $\epsilon$ goes to zero. We
will derive a contradiction. Restricting $\varphi_{\epsilon}$ to the
region where $r\leq 1$, we consider $\varphi_{\epsilon}$ as an eigenfunction
of $\Delta_{\epsilon Z}$ with eigenvalue $\lambda_{\epsilon}$. Rescaling
by $\epsilon$, we view $\varphi_{\epsilon}$ as a function on 
$Z$; for $|z|\leq 1/\epsilon$, it is an
eigenfunction of $\Delta_{Z}$ with eigenvalue $\epsilon^{2}\lambda_{\epsilon}$.

For each $\epsilon$, let $\psi_{\epsilon}=R_{\epsilon}(p_{\epsilon})^{\delta}
\varphi_{\epsilon}$; it is also an
eigenfunction of $\Delta_{Z}$ with the same eigenvalue, and we have
$|\psi_{\epsilon}(p_{\epsilon})|=1$. Moreover, $p_{\epsilon}$ maximizes
$R_{\epsilon}^{\delta}|\varphi_{\epsilon}|$, so it also maximizes
$R_{\epsilon}^{\delta}|\psi_{\epsilon}|$. Since $R_{\epsilon}$
rescales to a function on $Z$ which is $\epsilon |z|$ 
outside $|z| =1$, we have that for $|z| \geq 1$,
\[(\epsilon|z|)^{\delta}|\psi_{\epsilon}(z)|
\leq R_{\epsilon}(p_{\epsilon})^{\delta}\leq C^{\delta}\epsilon^{\delta}.\]
Therefore, for $|z|\geq 1$, 
we have $|\psi_{\epsilon}(z)|\leq C^{\delta}|z|^{-\delta}$.

As before, we take a limit of the $\psi_{\epsilon}$
(uniformly on compact subsets of $Z$) to obtain a limit $\bar\psi$.
Since $\epsilon^{2}\lambda_{\epsilon}$ goes to zero, $\bar\psi$ is a harmonic
function on $Z$.
The rescaled points $p_{\epsilon}$ lie in a bounded subset of $Z$ and hence
converge to some $\bar p\in Z$, and so we must have 
$\bar\psi(\bar p)=1$. Since each $|\psi_{\epsilon}(z)|\leq C^{\delta}|z|^{-\delta}$
for $|z|\geq 1$, the limit $\bar\psi$ must satisfy the same bound. However,
since $\delta>0$, $\bar\psi$ is a nonzero harmonic function on $Z$ which
approaches zero at infinity. This is impossible by the maximum principle,
and we have a contradiction.

\medskip

\textbf{Case 3:} Finally, suppose that $R_{\epsilon}(p_{\epsilon})$ approaches
zero but that $\epsilon^{-1}R_{\epsilon}(p_{\epsilon})$ goes to infinity.
Consider the restriction of $\varphi_{\epsilon}$ to the region in $\Omega_{\epsilon}$
between $\epsilon$ and 1, where the metric is exactly conic and $R_{\epsilon}$
is precisely $r$. We again rescale, but this time by $R_{\epsilon}(p_{\epsilon})$
instead of $\epsilon$.
The result is that we view $\varphi_{\epsilon}$
 as a function on the region of the infinite cone $C_{N}$ between
$s=\epsilon/(R_{\epsilon}(p_{\epsilon}))$ and $s=1/(R_{\epsilon}(p_{\epsilon}))$,
where $s$ is the radial variable on the cone. Note that $p_{\epsilon}\in\{s=1\}$.

As in Case 2, we again let $\psi_{\epsilon}=R_{\epsilon}(p_{\epsilon})^{\delta}
\varphi_{\epsilon}$.
This rescaled function $\psi_{\epsilon}$ is an eigenfunction of $\Delta_{C_{N}}$
with eigenvalue $R_{\epsilon}(p_{\epsilon})^{2}\lambda_{\epsilon}$, and
$|\psi_{\epsilon}|$ attains its maximum of 1 at $p_{\epsilon}$.
The function $R_{\epsilon}$ lifts to $R_{\epsilon}(p_{\epsilon})s$, so
we have that for any point $(s,\theta)\in C_{N}$ with $s$ between
$\epsilon/(R_{\epsilon}(p_{\epsilon}))$ and $1/(R_{\epsilon}(p_{\epsilon}))$,
\[(R_{\epsilon}(p_{\epsilon})s)^{\delta}|\psi_{\epsilon}(s,\theta)|
\leq R_{\epsilon}(p_{\epsilon})^{\delta}.\]
Hence $|\psi_{\epsilon}(s,\theta)|\leq s^{-\delta}$.

As $\epsilon$ goes to zero, the lower bound for $s$ goes to zero
and the upper bound for $s$ goes to infinity. Therefore, as in the
other cases, the functions $\psi_{\epsilon}$ converge to a limit
$\bar\psi$ on $C_{N}$. The rescaled points $p_{\epsilon}$ all have $s=1$ and
hence have a subsequence converging to a point $\bar p$. The eigenvalues
$R_{\epsilon}(p_{\epsilon})^{2}\lambda_{\epsilon}$ converge to zero,
so $\bar\psi$ is a harmonic function on $C_{N}$; moreover, $\bar\psi(p)=1$, so
it is nonzero. Finally, $\bar\psi(s,\theta)\leq s^{-\delta}$ for all $s$.
If $\delta$ is not an indicial root of the conic Laplacian, this is impossible
(see \cite{ma2}).
Since the indicial roots are discrete, we may pick $\delta$ which is not
an indicial root. This contradiction completes the proof that $\bar\lambda$ is
in $\sigma(\Delta_{\Omega_{0}})$.

\medskip

In order to complete the proof of spectral convergence,
we must also show that every point of $\sigma(\Delta_{\Omega_{0}})$ is an
accumulation point of $\sigma(\Delta_{\Omega_{\epsilon}})$, and that the
multiplicities are correct. However, the proof of these assertions
in \cite{row} does not depend on the assumption 
$n\geq 5$. Therefore, the remainder
of the proof proceeds as in \cite{row}, and we will not repeat it here.
We have now established Theorem \ref{specconv} for all $n\geq 2$.

\subsection{Lower bounds for eigenvalues}

We now prove uniform lower bounds on the eigenvalues of $\Delta_{\Omega_{\epsilon}}$. This sort of lower bound is generally difficult to prove using classical methods. However, as we will see, it is a relatively straightforward consequence of Theorem \ref{structure}. The key step is the following upper bound on the heat trace:
\begin{lemma}\label{auxiliaryone} For any fixed $T>0$, there is a constant $C$ so that
for all $t\leq T$ and all $\epsilon<1/2$ (including $\epsilon=0$),
\[Tr H^{\Omega_{\epsilon}}(t)\leq Ct^{-n/2}.\]
\end{lemma}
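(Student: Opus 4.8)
The plan is to extract the bound directly from the structure formula \eqref{structureeq} in Theorem \ref{structure}. Since $Tr H^{\Omega_\epsilon}(t)$ is written as a sum of three pieces, it suffices to bound each piece by $Ct^{-n/2}$ uniformly in $\epsilon \le 1/2$ and $t \le T$. The remainder term $R(\epsilon,t)$ is polyhomogeneous conormal on the (uncut) quadrant $Q = \mathbb{R}_+(\sqrt t)\times\mathbb{R}_+(\epsilon)$ with infinite-order decay as $t\to 0$; hence on the compact region $\{t\le T,\ \epsilon\le 1/2\}$ it is bounded (in fact $\mathcal O(t^\infty)$), so it contributes at most a constant, which is certainly $\le Ct^{-n/2}$ for $t\le T$. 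So the real work is in the two integral terms.

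For the second term, $\int_{\Omega_0}\chi_2(z)H^{\Omega_0}(t,z,z)\,dz$, note that $\chi_2$ is supported in the region $r\ge 15/16$, which is a fixed compact piece of $\Omega_0$ bounded away from the conic tip and with smooth metric there. On such a region the standard short-time on-diagonal heat kernel bound gives $H^{\Omega_0}(t,z,z)\le C t^{-n/2}$ uniformly for $t\le T$; integrating over the compact support of $\chi_2$ yields the desired bound for this term. (One can equally invoke polyhomogeneity of $H^{\Omega_0}$ on the appropriate blown-up heat space, but the classical interior bound is enough since $\chi_2$ avoids the singularity.)

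The first term, $\int_Z \chi_1(\epsilon z) H^Z(t/\epsilon^2, z, z)\,dz$, is the one requiring care, and is where I expect the main obstacle. After the substitution $w = \epsilon z$ (so $dz = \epsilon^{-n}\,dw$), this becomes $\epsilon^{-n}\int_{\epsilon Z}\chi_1(w) H^Z(t/\epsilon^2,\cdot,\cdot)\,dw$; equivalently it is exactly $\int \chi_1 H^{\epsilon Z}(t,\cdot,\cdot)$ by the parabolic scaling relating $H^{\epsilon Z}$ and $H^Z$. Either way, one wants the bound $\int_Z \chi_1(\epsilon z)H^Z(t/\epsilon^2,z,z)\,dz \le C(t/\epsilon^2)^{-n/2}\cdot\epsilon^{-n}\cdot(\text{volume factor}) = Ct^{-n/2}$, which is dimensionally the right scaling but must be justified uniformly in the time variable $\tau = t/\epsilon^2$, which ranges over all of $(0,\infty)$ as $\epsilon$ and $t$ vary. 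Thus one cannot just use short-time asymptotics on $Z$: for $\tau$ small one uses the short-time polyhomogeneous structure of $H^Z$ from \cite{s1} (giving $H^Z(\tau,z,z)\le C\tau^{-n/2}$ on the relevant region, with the conic point contributing no worse because $\chi_1$ localizes near the conic end of $Z$ where the geometry is exactly conic and the on-diagonal heat trace density is controlled), while for $\tau$ bounded below one uses the large-time behavior of $H^Z$ — again from \cite{s1} — together with the fact that the integral is over the region $|z|\le 17/(16\epsilon)$ of volume $\mathcal O(\epsilon^{-n})$, so that $\int\chi_1(\epsilon z)H^Z(\tau,z,z)\,dz$ grows at most polynomially in $\epsilon^{-1}$ and this is absorbed into $t^{-n/2} = (\tau\epsilon^2)^{-n/2}$ when $\tau$ is bounded below and $t\le T$. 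The cleanest packaging is probably to observe that $\int_Z\chi_1(\epsilon z)H^Z(t/\epsilon^2,z,z)\,dz$, as a function on $Q$, is itself polyhomogeneous conormal (this is essentially part of what Theorem \ref{structure} asserts, since this term appears explicitly in \eqref{structureeq} and the total is phg on $Q_0$ with leading order $-n$ in $\sqrt t$ at the faces L and F), and its leading behavior in $\sqrt t$ is order $-n$ — i.e. order $-n/2$ in $t$ — with no faster blow-up, uniformly up to $\epsilon = 0$; restricting to $t\le T$ then gives exactly $\le Ct^{-n/2}$. I would state the argument in that form: invoke Theorem \ref{structure} to say each summand in \eqref{structureeq} is phg conormal on $Q_0$ with worst order $-n$ in $\sqrt t$, and conclude that the sum, hence $Tr H^{\Omega_\epsilon}(t)$, is bounded by $Ct^{-n/2}$ on $\{t\le T,\ \epsilon\le 1/2\}$. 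The inclusion of $\epsilon = 0$ is immediate since the bound for $H^{\Omega_0}$ is the classical one (with the Friedrichs heat kernel near the cone point also satisfying $\le Ct^{-n/2}$ by Cheeger's conic heat asymptotics \cite{ch2}).
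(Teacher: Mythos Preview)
Your proposal is correct, and the argument you eventually land on --- invoke the polyhomogeneity on $Q_0$ with leading order $-n$ in $\sqrt t$ and conclude the bound --- is exactly the paper's. The difference is that you arrive there via a detour: you first decompose $Tr H^{\Omega_\epsilon}(t)$ using \eqref{structureeq}, bound the remainder and the $\Omega_0$ piece directly, and wrestle with the $Z$-piece through scaling before conceding that the ``cleanest packaging'' is simply to use the phg structure. The paper skips the decomposition entirely and applies that packaging to the full trace in one stroke: since $Tr H^{\Omega_\epsilon}(t)$ is phg conormal on $Q_0$ with leading orders $-n$, $-n$, $0$ at L, F, R, and $t^{n/2}$ has orders $n$, $n$, $0$ at the same faces, the product $t^{n/2}\,Tr H^{\Omega_\epsilon}(t)$ has leading order $0$ at every boundary hypersurface of $Q_0$, is therefore continuous up to the boundary, and is bounded on the compact region $\{t\le T,\ \epsilon\le 1/2\}$. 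Your piecewise analysis buys nothing extra here and requires you to know (or re-prove) the leading orders of each summand individually, which Theorem~\ref{structure} as stated only gives for the total trace and for $R(\epsilon,t)$. One small omission: in your final formulation you mention only the orders at L and F; boundedness on $Q_0$ also needs the order at R to be nonnegative, which the paper checks explicitly.
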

\begin{proof} From Theorem \ref{structure}, $Tr H^{\Omega_\epsilon}(t)$ is polyhomogeneous conormal on $Q_0$, with leading orders $-n$ at L and F (in terms of $\sqrt t$) and 0 at R. The function $t^{n/2}$ is polyhomogeneous on $Q$ and hence on $Q_{0}$,
with leading orders $n$ at L and F and 0 at R. Therefore, $t^{n/2}Tr H^{\Omega_{\epsilon}}(t)$
is polyhomogeneous on $Q_{0}$ with leading order 0 at each boundary face. In particular, $t^{n/2}Tr H^{\Omega_\epsilon}(t)$ is continuous up to the boundary of $Q_0$. If we
restrict to $t\leq T$ for any fixed $T$ and to $\epsilon<1/2$, then $t^{n/2}(Tr H^{\Omega_{\epsilon}}(t))$ is uniformly bounded, proving the lemma.
\end{proof}

Lemma \ref{auxiliaryone} immediately implies a uniform lower bound for the
eigenvalues:
\begin{lemma}\label{eigenbound} There is an $N_0\in\mathbb N$ 
and a $C'>0$ such that for all $k\geq N_0$ and all $\epsilon<1/2$
(including $\epsilon=0$),
\[\lambda_{\epsilon,k}\geq C'k^{2/n}.\]
\end{lemma}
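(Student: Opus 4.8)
The plan is to read off the eigenvalue lower bound from the heat‑trace upper bound of Lemma~\ref{auxiliaryone} by the classical ``easy half'' of the Tauberian comparison between the counting function and $Tr H$. Order the eigenvalues nondecreasingly, $0=\lambda_{\epsilon,0}\le\lambda_{\epsilon,1}\le\cdots$, as in the expansion $Tr H^{\Omega_{\epsilon}}(t)=\sum_{k\ge 0}e^{-\lambda_{\epsilon,k}t}$ used in this section. The first step is the trivial observation that, for every $k$ and every $t>0$,
\[
(k+1)\,e^{-\lambda_{\epsilon,k}t}\ \le\ \sum_{j=0}^{k}e^{-\lambda_{\epsilon,j}t}\ \le\ Tr H^{\Omega_{\epsilon}}(t),
\]
since each of the $k+1$ terms on the left is at least $e^{-\lambda_{\epsilon,k}t}$. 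Fixing, say, $T=1$ and applying Lemma~\ref{auxiliaryone}, the right‑hand side is bounded by $Ct^{-n/2}$ for all $t\le 1$ and all $\epsilon<1/2$, with $C$ independent of $k$ and of $\epsilon$; this is exactly where the uniformity ``including $\epsilon=0$'' is inherited.

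The second step is to choose $t$ as a function of $k$ so as to make this estimate efficient. Taking $t=A\,k^{-2/n}$ with $A:=(2C)^{2/n}$, one computes $Ct^{-n/2}=CA^{-n/2}k=k/2$, so the displayed inequality forces $e^{-\lambda_{\epsilon,k}t}<1/2$, hence $\lambda_{\epsilon,k}t>\log 2$, i.e.
\[
\lambda_{\epsilon,k}\ >\ \frac{\log 2}{t}\ =\ \frac{\log 2}{(2C)^{2/n}}\,k^{2/n}.
\]
This gives the claim with $C':=(\log 2)\,(2C)^{-2/n}$, provided only that the chosen $t=Ak^{-2/n}$ indeed lies in the range $t\le T=1$ in which Lemma~\ref{auxiliaryone} was applied; this requires $k^{2/n}\ge A$, i.e. $k\ge N_0:=\lceil 2C\rceil$, which accounts for the threshold $N_0$ in the statement.

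There is no essential difficulty here: the argument is a short application of Lemma~\ref{auxiliaryone}, and the genuine analytic content sits in Theorem~\ref{structure}, which underlies that lemma. The only points deserving attention are (i) keeping $t=Ak^{-2/n}$ inside the window $t\le T$ where the heat‑trace bound is valid --- this, and only this, is what produces the cutoff $k\ge N_0$ --- and (ii) choosing the constant $A$ large relative to $C$ so that the logarithm emerging from the exponential is bounded below by a fixed positive quantity; both are settled by the explicit choices above. Any other fixed $T$ would serve equally well, changing only $N_0$ and the value of $C'$, not the form of the bound.
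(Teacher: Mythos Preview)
Your proof is correct and follows essentially the same approach as the paper: bound a partial sum of the heat trace below by $(k+1)e^{-\lambda_{\epsilon,k}t}$, apply the uniform bound $Tr H^{\Omega_\epsilon}(t)\le Ct^{-n/2}$ from Lemma~\ref{auxiliaryone}, and then choose $t$ appropriately. The only cosmetic difference is that the paper routes the argument through the counting function $N_\epsilon(\lambda)$ and optimizes $t=(n/2)/\lambda$ before specializing to $\lambda=\lambda_{\epsilon,k}$, whereas you pick $t=Ak^{-2/n}$ directly; the content is the same.
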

\begin{proof} Let $N_{\epsilon}(\lambda)$ be the number of eigenvalues of
$\Delta_{\Omega_{\epsilon}}$ less than or equal to $\lambda$. Then apply
Lemma \ref{auxiliaryone} with $T=n/2$ to obtain that for all $t\leq n/2$,
\begin{equation}N_{\epsilon}(\lambda)e^{-\lambda t}\leq\sum_{\lambda_{\epsilon,j}\leq\lambda}
e^{-\lambda_{\epsilon,j}t}\leq Tr H^{\Omega_{\epsilon}}(t)\leq Ct^{-n/2}.
\end{equation}
Therefore, for all $t\leq n/2$,
\begin{equation}\label{countingbound} 
N_{\epsilon}(\lambda)\leq Ct^{-n/2}e^{\lambda t}.
\end{equation}
The right side of (\ref{countingbound}) is minimized when $t=(n/2)/\lambda$,
which is bounded by $n/2$ when $\lambda\geq 1$. So when $\lambda\geq 1$,
\begin{equation}N_{\epsilon}(\lambda)\leq \tilde C\lambda^{n/2}.
\end{equation}
Applying this to $\lambda=\lambda_{\epsilon,k}$, we see that for
$\lambda_{\epsilon,k}\geq 1$,
\[k\leq\tilde C\lambda_{\epsilon,k}^{n/2},\]
and hence
\[\lambda_{\epsilon,k}\geq C'k^{2/n}.\]
Finally, note that $N_{\epsilon}(1)\leq\tilde C$, so picking any $N_0\geq\tilde C$
ensures that if $k\geq N$, $\lambda_{\epsilon,k}\geq 1$. This completes
the proof of Lemma \ref{eigenbound}.
\end{proof}

This section illustrates the power of microlocal structure theorems such as Theorem \ref{structure}; it is possible to prove Lemma \ref{eigenbound} via a classical argument involving isoperimetric inequalities, but that method is much more involved.

\subsection{Heat trace convergence}
We now prove Theorem \ref{htconverg}, starting with the exponential decay claim. By Theorem \ref{specconv}, $\lambda_{\epsilon,1}$ converges to $\lambda_{0,1}>0$, and hence is bounded below uniformly by
$\lambda_{0,1}/2$ for small $\epsilon$; the same is of course true
for $\lambda_{\epsilon,k}$ for all $k$. Combining this observation with Lemma \ref{eigenbound}, we have
\begin{equation}\label{onemore}
Tr H^{\Omega_{\epsilon}}(t)\leq 1+(N_0-1)e^{-(\lambda_{0,1}/2)t}+
\sum_{k=N_0}^{\infty}e^{-tC'k^{2/n}}.\end{equation}
Since the infinite sum in (\ref{onemore}) is convergent for any $t$ and 
exponentially decaying in $t$, 
subtracting 1 from both sides immediately proves the second statement
in Theorem \ref{htconverg}.

To prove the heat trace convergence part of Theorem \ref{htconverg}, 
fix $\delta>0$ and $t>0$.
For any $M\geq N_0$, we write
\[|Tr H^{\Omega_{\epsilon}}(t)-Tr H^{\Omega_{0}}(t)|\leq
\sum_{k=1}^{M}|e^{-\lambda_{\epsilon,k}t}-e^{-\lambda_{\epsilon,0}t}|
+\sum_{k=M+1}^{\infty}e^{-\lambda_{\epsilon,k}t}+\sum_{k=M+1}^{\infty}
e^{-\lambda_{0,k}t}.\] 
Lemma \ref{eigenbound} implies that the second and third terms are both bounded by
\[\sum_{k=M+1}^{\infty}e^{-tC'k^{2/n}},\]
which goes to zero as $M$ goes to infinity. So we may fix an $M$ such that
both of these terms are bounded by $\delta/3$ for any $\epsilon<1/2$. 
Finally, by spectral convergence, for sufficiently small $\epsilon$,
the first term is also bounded by $\delta/3$. We conclude that for sufficiently
small $\epsilon$,
\[|Tr H^{\Omega_{\epsilon}}(t)-Tr H^{\Omega_{0}}(t)|<\delta.\]
Since $d>0$ was arbitrary, this completes the proof of Theorem \ref{htconverg}.


\section {The determinant formula}

The proof of Theorem \ref{detapprox} involves direct analysis of the zeta function on $\Omega_\epsilon$, in particular the standard heat trace formula
\begin{equation}\label{zetafunction}
\zeta_{\Omega_{\epsilon}}(s)=\frac{1}{\Gamma(s)}\int_{0}^{\infty}
(Tr H^{\Omega_{\epsilon}}(t)-1) t^{s-1}\ ds,\end{equation}
where equality is in the sense of meromorphic continuation from the
region where $\Re s>n/2$. We prove Theorem \ref{detapprox} by decomposing
the heat trace into several pieces and analyzing (\ref{zetafunction})
for each piece, using Theorems \ref{structure} and \ref{htconverg}.

\subsection{Long-time contribution}
We begin by studying the long-time piece of the zeta function:
\begin{equation}\label{zetafunctionlong}
\zeta_{\Omega_{\epsilon},L}(s)=\frac{1}{\Gamma(s)}\int_{1}^{\infty}
(Tr H^{\Omega_{\epsilon}}(t)-1) t^{s-1}\ ds.\end{equation}

The integrand decays exponentially in $t$ as $t\rightarrow\infty$
for any $s\in\mathbb C$, so we can differentiate under the integral sign.
The function $\frac{1}{\Gamma(s)}$ is holomorphic at $s=0$ with leading order $s$,
so differentiation gives:
\begin{equation}\label{zetaprimelong}
\zeta'_{\Omega_{\epsilon},L}(0)=\int_{1}^{\infty}(Tr H^{\Omega_{\epsilon}}(t)-
1)
t^{-1}\ dt.\end{equation} 
As an aside, note that in fact $\zeta_{\Omega_{\epsilon,L}}(0)=0$;
the integral in (\ref{zetafunctionlong}) is holomorphic on $\mathbb C$,
and hence multiplication by the inverse gamma function gives a holomorphic
function which is 0 at $s=0$.

By Theorem \ref{htconverg}, there exist $\mu>0$ and $C<\infty$
independent of $\epsilon$ such that:
\[|(Tr H^{\Omega_{\epsilon}}(t)-1)|\leq Ce^{-\mu t}.\]
Moreover, by the same theorem,
\[\lim_{\epsilon\rightarrow 0} Tr H^{\Omega_{\epsilon}}(t)-1=
Tr H^{\Omega_{0}}(t)-1.\]
We may therefore apply the dominated convergence theorem to
(\ref{zetaprimelong}) to conclude that
\begin{equation}\label{longtimelimit}
\lim_{\epsilon\rightarrow 0}\zeta'_{\Omega_{\epsilon},L}(0)= \zeta'_{\Omega_{0},L}(0).\end{equation}
In fact, we expect that there should
be a polyhomogeneous expansion for $\zeta'_{\Omega_{\epsilon},L}(0)$ as
$\epsilon$ goes to zero, but it is difficult to analyze the long-time behavior
of $\epsilon$-derivatives of $Tr H^{\Omega_{\epsilon}}(t)$.

\subsection{Projection term}
Now we consider the short-time zeta function
\begin{equation}
\zeta_{\Omega_{\epsilon},S}(s)=\frac{1}{\Gamma(s)}\int_{0}^{1}
(Tr H^{\Omega_{\epsilon}}(t)-1) t^{s-1}\ ds.\end{equation}
By direct computation, we may write
\begin{equation}\label{stzetacomplete}
\zeta_{\Omega_{\epsilon},S}(s)=\frac{1}{\Gamma(s)}\int_{0}^{1}
Tr H^{\Omega_{\epsilon}}(t)t^{s-1}\ ds -\frac{1}{s\Gamma(s)}.
\end{equation}
The second term in (\ref{stzetacomplete}) is the contribution
from the projection off the constants. 

\subsection{Leading order term at R}\label{leadingorderterm}
Now that we have analyzed the long-time and projection contributions, we 
consider the integral
\begin{equation}\label{shorttimezeta}
\frac{1}{\Gamma(s)}\int_{0}^{1} Tr H^{\Omega_{\epsilon}}(t)\ t^{s-1}\ dt.
\end{equation}.
To best take advantage of our knowledge about the leading-order terms of
$Tr H^{\Omega_{\epsilon}}(t)$ on $Q_{0}$, we break off the leading-order
term at R in a neighborhood of R. In particular, fix $b\in(0,\infty)$,
and break the analysis of (\ref{shorttimezeta}) into two regions: 
$\sqrt t<b\epsilon$ and $\sqrt t>b\epsilon$. We work in local coordinates in each region; let $\tau=t/\epsilon^2$ and $\eta=\epsilon/\sqrt t$, as inspired by \cite{mr} and illustrated in Figure \ref{qnaught}.
When $\sqrt t<b\epsilon$, we use the coordinates $(\sqrt\tau,\epsilon)$;
when $\sqrt t>b\epsilon$, we instead use $(\eta,\sqrt t)$.

In the region $\{\sqrt t>b\epsilon\}$, we break off the leading-order term at
R, which by Theorem \ref{htconverg} is 
$Tr H^{\Omega_{0}}(t)\chi\{\sqrt t\geq b\epsilon\}$, and consider it separately.
Its contribution to (\ref{shorttimezeta}) is precisely
\begin{equation}\label{equationeleven}
\frac{1}{\Gamma(s)}\int_{b^{2}\epsilon^{2}}^{1}
Tr H^{\Omega_{0}}(t)t^{s-1}\ dt=
(\zeta_{\Omega_{0},S}(s)+\frac{1}{s\Gamma(s)})
-\frac{1}{\Gamma(s)}\int_{0}^{b^{2}\epsilon^{2}}Tr H^{\Omega_{0}}(t)t^{s-1}\ dt.
\end{equation}
Note that (\ref{equationeleven}) 
is initially valid for large $s$; 
however, since both sides have meromorphic continuations,
it is valid for all $s$.

As proven by Cheeger \cite{ch2} and generalized by Bruning and Seeley \cite{brs}, $Tr H^{\Omega_{0}}(t)$ has a short-time heat expansion given by
\[Tr H^{\Omega_{0}}(t)=\sum_{k=0}^{n}a_{k}t^{(k-n)/2}+
K\log t+\mathcal R(t),\]
where $\mathcal R(t)$ is $\mathcal O(t^{\alpha})$ for some $\alpha>0$. We plug this expansion into the right-hand side of (\ref{equationeleven}).
All the integrals except for the error term may be evaluated
directly, and yield:
\[-\frac{1}{\Gamma(s)}\sum_{k=0}^{n}
a_{k}\frac{(b\epsilon)^{k-n+2s}}{(k-n)/2+s}
+\frac{Kb^{2s}\epsilon^{2s}}{s\Gamma(s)}(\frac{1}{s}-(2\log b+2\log\epsilon)).\]
This is a meromorphic function on $\mathbb C$, 
with a simple pole at $0$ if $K\neq 0$.

As for the error term, it is of order $t^{\alpha}$ for $\alpha>0$,
so its contribution to (\ref{equationeleven}) converges and is holomorphic
in a neighborhood
of $s=0$. It is bounded by
\[\frac{1}{\Gamma(s)}\int_{0}^{b^{2}\epsilon^{2}}Ct^{\alpha+s-1}\ dt
=\frac{C}{\Gamma(s)(\alpha+s)}(b^{2}\epsilon^{2})^{\alpha+s}.\]
In the neighborhood $|s|<\alpha/2$, this is a holomorphic function for 
each $\epsilon$ which decays in $\epsilon$. Thus 
the contribution of this error term to the zeta function is $o(1)$ in $\epsilon$. To show that the same is true for the determinant, we take an $s$-derivative of the $\mathcal R(t)$ portion of (\ref{equationeleven}) and obtain:
\begin{equation}\label{sderiv}-\int_0^{b^2\epsilon^2}\mathcal R(t)t^{s-1}\ dt-\frac{1}{\Gamma(s)}\int_0^{b^2\epsilon^2}\mathcal R(t)t^{s-1}\log t\ dt.\end{equation} Essentially the same analysis, using the fact that $\mathcal R(t)<Ct^{\alpha}$, shows that (\ref{sderiv}) is $o(1)$ in $\epsilon$.\\

Combining the contributions to (\ref{zetafunction}) of the long-time zeta function, the projection off the constants, and the leading-order term in $\sqrt t>b\epsilon$, we have:
\begin{equation}\label{leadingproj}
\zeta_{\Omega_{0}}(s)-\frac{1}{\Gamma(s)}\sum_{k=0}^{n}
a_{k}\frac{(b\epsilon)^{k-n+2s}}{(k-n)/2+s}
+\frac{Kb^{2s}\epsilon^{2s}}{s\Gamma(s)}(\frac{1}{s}-(2\log b+2\log\epsilon)),
\end{equation}
plus a term whose contribution to the
determinant is $o(1)$ as $\epsilon$ goes to zero. Notice that the factors of $(s\Gamma(s))^{-1}$ from the projections off the constants in the short-time and long-time contributions cancel.

\subsection{The $\Omega_{0}$ term}
Now we consider the portion of (\ref{shorttimezeta}) which remains after
subtracting the leading-order term at R. Recall the definitions of $\chi_1(z)$ and $\chi_2(z)$ from the discussion in the introduction preceding Theorem \ref{structure}. Write
\begin{equation}\label{breakup} Tr H^{\Omega_{\epsilon}}(t)=
\int_{\epsilon Z}\chi_{1}(z)H^{\epsilon z}(t,z,z)\ dz+\int_{\Omega_{0}}
\chi_{2}(z)H^{\Omega_{0}}(t,z,z)\ dz+R(\epsilon, t).\end{equation}
We analyze these three terms' contributions to
(\ref{shorttimezeta}) separately; however, we must subtract the leading order term at R in the region $\sqrt t>b\epsilon$. By Theorem \ref{structure} and its proof, each of the three terms is phg conormal on $Q_0$ with leading order 0 at R, so we may separately subtract the leading order term at R from each term in (\ref{breakup}).

First, we consider the $\Omega_0$ term. Its contribution to (\ref{shorttimezeta}) is
\[\frac{1}{\Gamma(s)}\int_{0}^{1}(\int_{\Omega_{0}}\chi_{2}(z)H^{\Omega_{0}}
(t,z,z)\ dz)
t^{s-1}\ dt;\] 
however, we have to subtract the leading-order terms at R in $\sqrt t>b\epsilon$. The $\Omega_{0}$ term is
independent of $\epsilon$, so the leading-order term at R is the whole integral.
Therefore, we only integrate from $0$ to $b^{2}\epsilon^{2}$. The contribution
to (\ref{shorttimezeta}) is
\[\frac{1}{\Gamma(s)}\int_{0}^{b^{2}\epsilon^{2}}
(\int_{\Omega_{0}}\chi_{2}(z)H^{\Omega_{0}}(t,z,z)\ dz)t^{s-1}\ dt.\]
This analysis is similar to the previous section. Note that the short-time
heat expansion for $\Omega_{0}$ is local, and $\chi_{2}(z)$ localizes
away from the cone point, so there is no logarithmic term. Therefore,
there exist $\tilde a_{k}$ such that
\[\int_{\Omega_{0}}\chi_{2}(z)H^{\Omega_{0}}(t,z,z)\ dz=
\sum_{k=0}^{n}\tilde a_{k}t^{(k-n)/2}+\mathcal O(t^{\alpha}).\]
As in the discussion of the leading-order term at R, we obtain a contribution
to (\ref{shorttimezeta}) of
\begin{equation}\label{omeganoughtcontrib}
\frac{1}{\Gamma(s)}\sum_{k=0}^{n}
\tilde a_{k}\frac{(b\epsilon)^{k-n+2s}}{(k-n)/2+s},\end{equation}
modulo an error term whose contribution to the determinant is $o(1)$ in
$\epsilon$.

\subsection{The error term}\label{theerrorterm}
Now we consider the term coming from $R(\epsilon,t)$. In the region
$\sqrt t<b\epsilon$, this decays to infinite order in both
$\tau$ and $\epsilon$; in particular, it is less than $C\tau\epsilon^{2}=Ct$
for some fixed $C$. As before, any term of order $\mathcal O(t^{\alpha})$
for $\alpha>0$ contributes only $o(1)$ to the determinant when integrated
over the region $\sqrt t<b\epsilon$, so the contribution from this region
is $o(1)$.

On the other hand, where $\sqrt t>b\epsilon$, $R(\epsilon,t)$
is a function of $(\eta,\sqrt t)$. It decays to infinite order in $\sqrt t$,
and since we have subtracted the leading order term at R it decays
to some positive order $\delta$ in $\eta$. In particular,
$R(\epsilon,t)$ is bounded in the region $\sqrt t>b\epsilon$ by
$C\sqrt t^{3\delta}\eta^{\delta}=Ct^{\delta}\epsilon^{\delta}$.
The contribution to (\ref{shorttimezeta}) is an integral
between $t=b^{2}\epsilon^{2}$ and $t=1$, so it is convergent
and holomorphic for all $s$ (we do not integrate down to $s=0$).
We may then differentiate under the integral sign and obtain that
its contribution to $\zeta_{\Omega_{\epsilon},S}'(0)$ is
\[\int_{b^{2}\epsilon^{2}}^{1}R(\epsilon,t)t^{-1}\ dt,\]
which is bounded by
\[C\epsilon^{\delta}\int_{0}^{1}t^{\delta-1}\ dt=\frac{C}{\delta}\epsilon^{\delta}.\]
Therefore, the remainder term $R(\epsilon,t)$
contributes only $o(1)$ to the determinant
as $\epsilon$ approaches zero after we subtract the leading order at R.

\subsection{The $\epsilon Z$ term}\label{epsilonzee}

It remains only to analyze the contribution to (\ref{shorttimezeta})
of the first term in (\ref{breakup}).
When we plug this term into (\ref{shorttimezeta}), we get
\[\frac{1}{\Gamma(s)}\int_{0}^{1}(\int_{\epsilon Z}\chi_{1}(z)
H^{\epsilon Z}(t,z,z)\ dz)t^{s-1}\ dt,\]
which may be rewritten, by scaling and a change of variables, as:
\begin{equation}\label{anotherthingtolabel}\frac{2\epsilon^{2s}}{\Gamma(s)}\int_{0}^{1/\epsilon}(\int_{Z}\chi_{1}(\epsilon z)
H^{Z}(\tau,z,z)\ dz)\sqrt\tau^{2s-1}\ d\sqrt\tau. \end{equation}

We now recognize the term
\begin{equation}\label{rememberme}\int_{Z}\chi_{1}(\epsilon z)
H^{Z}(\tau,z,z)\ dz\end{equation} from the discussion in \cite{s1} of the renormalized heat trace on asymptotically conic manifolds. Recall from \cite{s1} that the renormalized heat trace $^RTr H^Z(\tau)$ is defined to be the constant term in the expansion as $\epsilon$ goes to zero of
\[\int_{|z|\leq 1/\epsilon}H^Z(\tau,z,z).\]
In \cite{s1}, we prove that (\ref{rememberme}) is polyhomogeneous conormal in $(\tau,\epsilon)$ for small $\tau$ and on $X_b^2(\tau^{-1/2},\epsilon)$ for large $\tau$.\footnote{The definition of $X_b^2(\tau^{-1/2},\epsilon)$ may be found in \cite{ms}; it is just $\mathbb R_+(\tau^{-1/2})\times\mathbb R_+(\epsilon)$ with a radial blow-up at $\tau^{-1/2}=\epsilon=0$. As always, see \cite{me,me2,gri} for background on blow-ups and polyhomogeneous conormal functions.} Moreover, since $Z$ is exactly conic near infinity, \cite[Lemma 19]{s1} gives an explicit expansion for (\ref{rememberme}) as $\epsilon$ goes to zero:
\[
\int_{Z}\chi_{1}(\epsilon z)H^{Z}(\tau,z,z)\ dz=
\sum_{k=0}^{n-1}D_{k}\tau^{(k-n)/2}\epsilon^{k-n}-L\log\epsilon\]
\begin{equation}\label{smoothexpredux}+(^{R}Tr H^{Z}(\tau)-Ll_{log})
+\tilde R(\epsilon,\tau),\end{equation}
where $\tilde R(\epsilon,\tau)$ vanishes as $\epsilon\rightarrow 0$ for each $\tau$. Moreover, if we let $u_n(z)$ be the coefficients of the heat asymptotics on the infinite cone $C_N$, then
 \[L=\int_{N}u_{n}(1,y)\ dy,\ \ D_{k}=\frac{l_{k}}{k-n}\int_{N}u_{k}(1,y)\ dy,\]\[
l_{k}=-\int_{1/2}^{2}\chi_{1}'(r)r^{k-n}\ dr, l_{log}=-\int_{1/2}^{2}\chi_{1}'(r)\log r\ dr.\]
Note that by the phg conormality of (\ref{rememberme}) and the expansion (\ref{smoothexpredux}), the remainder term $\tilde R(\epsilon,\tau)$ has the same phg conormality as (\ref{rememberme}).

We will plug (\ref{smoothexpredux}) into (\ref{shorttimezeta}).
Since we still need to subtract off the leading order terms in 
$\sqrt\tau>b$, we break up the integral at $\sqrt\tau=b$.

\subsubsection{$\tau<b^2$:}

For $\tau<b^{2}$, $(\epsilon,\tau)$ are good coordinates, 
and we rewrite the part of the integral (\ref{anotherthingtolabel}) where $\tau<b^2$ as an
integral in $\sqrt\tau$:
\begin{equation}\label{shorttimezetatau}
\frac{2\epsilon^{2s}}{\Gamma(s)}\int_{0}^{b}Tr H^{\Omega_{\epsilon}}(\tau\epsilon^2)
\sqrt\tau^{2s-1}d\sqrt\tau.\end{equation}
Plugging the expansion (\ref{smoothexpredux}) into (\ref{shorttimezetatau}) and integrating gives
\[\sum_{k=0}^{n-1}\frac{2D_{k}\epsilon^{2s+k-n}b^{2s+k-n}}{(2s+k-n)\Gamma(s)}-
\frac{L\epsilon^{2s}b^{2s}\log\epsilon}{s\Gamma(s)}
-\frac{Ll_{log}\epsilon^{2s}b^{2s}}{s\Gamma(s)}\]\[
+\frac{2\epsilon^{2s}}{\Gamma(s)}\int_{0}^{b}\ ^{R}Tr H^{Z}(\tau)\sqrt\tau^{2s-1}\ d\sqrt\tau
+\hat R(\epsilon,s),\]
where \begin{equation}\label{remainone}\hat R(\epsilon,s)=
\frac{2\epsilon^{2s}}{\Gamma(s)}\int_{0}^{b}\tilde R(\epsilon,\tau)\sqrt\tau^{2s-1}\
d\sqrt\tau.\end{equation}

Note that $\tilde R(\epsilon,\tau)$ is phg conormal
in $(\epsilon,\tau)$ for small $\tau$ and the leading order
at $\epsilon=0$ is positive, so the coefficients of powers of
$\tau$ all decay to a positive order in $\epsilon$; let $\delta$ be less than that positive power. Then there exist $l$, $z_k$, and $p_k$ for each $k$ between $0$ and $l$ such that
\[\tilde R(\epsilon,\tau)=\sum_{k=0}^{l}b_{k}(\epsilon)\tau^{z_{k}}(\log\tau)^{p_{k}}
+\mathcal O(\epsilon^{\delta}\tau^{\delta/2}),\]
where $b_{k}(\epsilon)$ are $\mathcal O(\epsilon^{\delta})$ as well.
Plugging this expansion into (\ref{remainone}), the 
$\mathcal O(\epsilon^{\delta}\tau^{\delta/2})$ term is bounded by $C\epsilon^{\delta}\tau^{\delta/2}=t^{\delta/2}$ for $\sqrt\tau<b$, and hence its contribution to the zeta function and determinant is $o(1)$, as before. The rest of the expansion gives
\[\frac{\epsilon^{2s}}{\Gamma(s)}\sum_{k=0}^{l}b_{k}(\epsilon)
\int_{0}^{b}\tau^{z_{k}+2s-1}(\log\tau)^{p_{k}}\ d\sqrt\tau.\]
Each term, in a neighborhood of $s=0$, is $\epsilon^{2s+\delta}$ times an
explicit meromorphic function of $s$ with coefficient depending on $b$.
Therefore, each of the coefficients in the Laurent series for (\ref{remainone}) around $s=0$ is $\mathcal O(\epsilon^{\delta})$, and so the contribution of this remainder term to the zeta function and determinant is $o(1)$ (in fact, $\mathcal O(\epsilon^{\delta})$).

\subsubsection{$\tau>b^2$:}

In this region, we use the coordinates $(\eta,\sqrt t)$, and the expansion (\ref{smoothexpredux}) becomes:
\begin{equation}\label{smoothexpnearr}
\sum_{k=0}^{n-1}D_{k}(\sqrt t)^{k-n}-L\log(\sqrt t)-L\log\eta
+^{R}Tr H^{Z}(\eta^{-2})-Ll_{log}+\tilde R.\end{equation}
We need to subtract the leading order term at R, which in these coordinates is the face $\eta=0$. We do this first for the non-remainder terms and then for $\tilde R$.

The expansion (\ref{smoothexpnearr}) is phg conormal on $Q_0$, hence in $(\eta,\sqrt t)$ in this region, and it has leading order 0 at R. The non-remainder terms are phg conormal as a consequence of the phg conormality of the renormalized heat trace (proven in \cite{s1}). Moreover, they are precisely the finite and divergent parts of (\ref{rememberme}) at F, and hence have leading order 0 at R; this implies that $^RTr H^Z(\eta^{-2})$ has leading-order behavior of $L\log\eta$ at $\eta=0$. If we let $f_{\infty}$ be the constant term in the expansion at $\eta=0$ of $^{R}Tr H^{Z}(\eta^{-2})$, we see that the terms which remain after subtracting the leading order at R of the non-remainder terms in (\ref{smoothexpnearr}) are precisely
\begin{equation}\label{someterms}
-L\log\eta+^{R}Tr H^{Z}(\eta^{-2})-f_{\infty}.\end{equation}
We now plug these terms into (\ref{shorttimezeta}); we could switch (\ref{shorttimezeta}) to an integral in $\eta$, but it is easier to switch (\ref{someterms}) back to $(\epsilon,\tau)$, and it becomes
\[L\log(\sqrt\tau)+^{R}Tr H^{Z}(\tau)-f_{\infty}.\]
Plugging these into (\ref{shorttimezetatau}) and integrating from $\sqrt\tau=b$ to $\sqrt\tau=1/\epsilon$ gives a contribution of
\begin{equation}\label{thatcontrib}
\frac{2\epsilon^{2s}}{\Gamma(s)}\int_{b}^{1/\epsilon}
(L\log(\sqrt\tau)+^{R}Tr H^{Z}(\tau)-f_{\infty})\sqrt\tau^{2s-1}\ d\sqrt\tau.
\end{equation}

To get rid of the artificial upper bound of $1/\epsilon$, examine
\begin{equation}\label{thiscontrib}
\frac{2\epsilon^{2s}}{\Gamma(s)}\int_{1/\epsilon}^{\infty}
(L\log(\sqrt\tau)+^{R}Tr H^{Z}(\tau)-f_{\infty})\sqrt\tau^{2s-1}\ d\sqrt\tau.
\end{equation}
Because of the positive-order decay of (\ref{someterms}) at $\tau=\infty$, the integral in
(\ref{thiscontrib}) is well-defined in a neighborhood of $s=0$ and has
uniform-in-$s$ decay to a positive order in $\epsilon$. The same is true
of its $s$-derivative, precisely as in subsection \ref{leadingorderterm}. 
Therefore, (\ref{thiscontrib}) contributes
only terms of order $o(1)$ to the determinant, and we can replace
the analysis of (\ref{thatcontrib}) by analysis of
\begin{equation}\label{finalterm}
\frac{2\epsilon^{2s}}{\Gamma(s)}\int_{b}^{\infty}
(L\log(\sqrt\tau)+^{R}Tr H^{Z}(\tau)-f_{\infty})\sqrt\tau^{2s-1}\ d\sqrt\tau.
\end{equation}
We can directly evaluate most of the integrals in (\ref{finalterm}),
and we obtain a contribution of
\begin{equation}\label{finalcontrib}
\frac{-L\epsilon^{2s}b^{2s}}{2s\Gamma(s)}[2\log b-\frac{1}{s}]
+\frac{2\epsilon^{2s}}{\Gamma(s)}\int_{b}^{\infty}\ ^{R}Tr H^{Z}(\tau)
\sqrt\tau^{2s-1}\ d\sqrt\tau+\frac{f_{\infty}\epsilon^{2s}b^{2s}}{s\Gamma(s)}.
\end{equation}.

As for the remainder, since both (\ref{smoothexpnearr}) and the non-remainder terms in (\ref{smoothexpnearr}) are phg conormal on $Q_0$, so is $\tilde R$; moreover, it has positive leading order at F and leading order 0 at R. When we subtract the constant term at R, $\tilde R$ has positive-order decay at both L and R, hence at both $\eta=0$ and $\sqrt t=0$. As in the analysis of the remainder term in subsection \ref{theerrorterm}, $\tilde R$ only contributes a term of $o(1)$ to the zeta function and determinant.

Combining all the analysis in this subsection, we see that the
total contribution of the $\epsilon Z$ term to the zeta function, after subtracting the leading
orders at R, is
\[\sum_{k=0}^{n-1}\frac{2D_{k}\epsilon^{2s+k-n}b^{2s+k-n}}{(2s+k-n)\Gamma(s)}-
\frac{L\epsilon^{2s}b^{2s}\log\epsilon}{s\Gamma(s)}
-\frac{Ll_{log}\epsilon^{2s}b^{2s}}{s\Gamma(s)}\]\begin{equation}
\label{ezcontrib}
-\frac{L\epsilon^{2s}b^{2s}}{2s\Gamma(s)}[2\log b-\frac{1}{s}]
+\frac{f_{\infty}\epsilon^{2s}b^{2s}}{s\Gamma(s)}+
\frac{\epsilon^{2s}}{\Gamma(s)}\int_{0}^{\infty}\ ^{R}Tr H^{Z}(\tau)\tau^{s-1}\ 
d\tau,\end{equation}
modulo terms which contribute only $o(1)$ to the determinant expansion.

We now recognize the last term above as precisely 
$\epsilon^{2s}(^{R}\zeta_{Z}(s))$ - that is, $\epsilon^{2s}$ times the
renormalized zeta function on $Z$. To analyze this term, recall that the renormalized heat trace $^RTr H^Z(\tau)$ has polyhomogeneous expansions in $\tau$ at $\tau=0$ and $\tau=\infty$. From Proposition \ref{indexsetatzero}, the expansion at $\tau=0$ has index set in terms of $\sqrt\tau$ given by $-n+2\mathbb N_0$, similar to the usual heat kernel on a compact manifold; hence \[\int_{0}^{1}\ ^{R}Tr H^{Z}(\tau)\tau^{s-1}\ d\tau\] will have no pole at $s=0$. However, there is a term $-L\log\sqrt\tau$ in the expansion at $\tau=\infty$, and as a result the renormalized zeta function $^R\zeta_Z(s)$ may have a pole at $s=0$. In particular, we may write the Laurent series for $^R\zeta_Z(s)$ about $s=0$ as
\[^R\zeta_Z(s)=-\frac{L}{4}s^{-1}+^R\zeta_Z(0)+^R\zeta_Z'(0)\cdot s+\mathcal O(s^2).\]

\subsection{Proof of Theorem \ref{detapprox}}
We now combine 
(\ref{leadingproj}), 
(\ref{omeganoughtcontrib}), and (\ref{ezcontrib}), treating the $k=n$ case
of (\ref{leadingproj}),
(\ref{omeganoughtcontrib}), and (\ref{ezcontrib}) separately. We also use the fact, from \cite[Theorem 2.1]{ch2}, that $K=-L/2$, which yields major cancellations.
The zeta
function $\zeta_{\Omega_{\epsilon}}(s)$, modulo terms which contribute only
$o(1)$ to the determinant, is:

\begin{equation}\label{finalmess}\zeta_{\Omega_{0}}(s)+\frac{1}{\Gamma(s)}
\sum_{k=0}^{n-1}\frac{(-a_{k}+\tilde a_{k}+D_{k})
{b \epsilon}^{2s+k-n}}{s+(k-n)/2}
+\frac{\epsilon^{2s}b^{2s}}{s\Gamma(s)}(-a_{n}+\tilde a_{n}-Ll_{\log}
+f_{\infty})+
\epsilon^{2s}(^{R}\zeta_{Z}(s)).\end{equation}

This expression is a meromorphic function of $s$, uniformly in $\epsilon$. We may therefore
evaluate the $s$ coefficient of its Laurent series at $s=0$. When we do this,
we must take derivatives of the gamma function, so we 
write the Laurent series for $\frac{1}{\Gamma(s)}$ as $s+A_{2}s^{2}+
A_{3}s^{3}+\ldots$. We obtain
\[\zeta'_{\Omega_{\epsilon}}(0)=
\zeta'_{\Omega_{0}}(0)-\frac{L}{2}(\log\epsilon)^2+^{R}\zeta_{Z}(0)(2\log\epsilon)+^{R}\zeta_{Z}'(0)
+\sum_{k=0}^{n-1}\frac{(-a_{k}+\tilde a_{k}+D_{k})b^{k-n}\epsilon^{k-n}}
{(k-n)/2}\]
\[+(2\log b+2\log\epsilon +A_{2})(-a_{n}+\tilde a_{n}-Ll_{\log}
+f_{\infty})+o(1).\]
This equation appears quite complicated. However, the choice of $b$ was
arbitrary, so the coefficients of all powers of $\epsilon$ must be independent
of $b$. This allows successive simplifications.

First notice that the coefficient of $\epsilon^{k-n}$, for each $k$ between
0 and $n-1$, is a constant times $b^{k-n}$. This coefficient must be
independent of $b$, so the constant must be zero\footnote{
This cancellation
may seem rather fortunate, but it just reflects the artificial nature of the 
decomposition at $\sqrt\tau=b$.}. Therefore
\[\zeta'_{\Omega_{0}}(0)-\frac{L}{2}(\log\epsilon)^2+^{R}\zeta_{Z}(0)(2\log\epsilon)+^{R}\zeta_{Z}'(0)+
(2\log b+2\log\epsilon +A_{2})(-a_{n}+\tilde a_{n}-Ll_{\log}+f_{\infty}),\]
plus terms vanishing as $\epsilon$ goes to zero.
Finally, the constant term in this expansion is
\[\zeta'_{\Omega_{0}}(0)+^{R}\zeta_{Z}'(0)+
(2\log b+A_{2})(-a_{n}+\tilde a_{n}-Ll_{\log}+f_{\infty}),\]
and again it must be independent of $b$. Therefore
$-a_{n}+\tilde a_{n}-Ll_{\log}+f_{\infty}=0$.
We can now write
\[\zeta'_{\Omega_{\epsilon}}(0)=-\frac{L}{2}(\log\epsilon)^2+
2\log\epsilon(^{R}\zeta_{Z}(0))+\zeta'_{\Omega_{0}}(0)+^{R}\zeta_{Z}'(0)
+o(1).\]
Multiplying by $-1$, using the definition of $L$, and using the definition of the determinant of the
Laplacian completes the proof of Theorem \ref{detapprox}.

As a quick check on Theorem \ref{detapprox}, suppose that $Z=\mathbb R^n$; this is the trivial case, where $\Omega_0$ is flat in a neighborhood of P, with no conic singularity, and $\Omega_\epsilon=\Omega_0$ for every $\epsilon$. Then it is easy to compute the renormalized heat trace; it is the finite part as $\epsilon\rightarrow 0$ of
\[\int_{r\leq 1/\epsilon}H^{\mathbb R^n}(t,z,z)\ dz=\frac{Vol(r\leq 1/\epsilon)}{(4\pi t)^{n/2}}=\epsilon^{-n}(4\pi t)^{-n/2}Vol(r\leq 1).\]
The finite part is precisely zero, which means that the renormalized heat trace is identically zero, and hence the same is true for the renormalized zeta function and determinant. Note also that all heat invariants on $\mathbb R^n$ beyond the first are zero. Theorem \ref{detapprox} then reduces to $\zeta'_{\Omega_\epsilon}(0)=\zeta'_{\Omega_0}(0)+o(1)$, as expected (in fact, the error is zero in this case).


\section{The parametrix construction}

In this section, we construct the heat kernel for $\Omega_{\epsilon}$ 
by gluing together the heat kernels on $\epsilon Z$ and $\Omega_{0}$. 
We then use the construction to 
analyze the fine structure of the heat trace and prove
Theorem \ref{structure}.

\subsection{Outline of the parametrix construction}
In order to do a gluing construction, we need to introduce cutoff functions. First restrict attention to
$0<\epsilon<1/2$, and note that $\Omega_{\epsilon}$ is exactly conic
between $r=\frac{\epsilon}{2}$ and $r=2$. Recall from the introduction that $\chi_{1}$ is a smooth radial cutoff function on $\Omega_{\epsilon}$, equal to 1 on $r\leq 15/16$
and 0 on $r\geq 17/16$, and that $\chi_{2}=1-\chi_{1}$. 
Define two slightly larger radial cutoff functions; let $\tilde\chi_1$ be 1 on $r\leq 9/8$
and 0 on $r\geq 5/4$. Let
$\tilde\chi_{2}$ be 0 on $r\leq 3/4$ and $1$ on $r\geq 5/8$. These cutoffs are
illustrated, as functions of $r$, in Figure 6.1.

\begin{figure}\label{cutoffs}
\centering
\includegraphics[scale=0.70]{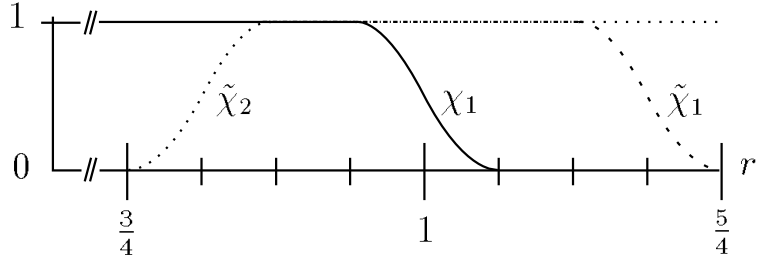}
\caption{Cutoffs as functions of $r$.}
\end{figure}
We can now define the initial parametrix:
\begin{definition} The \emph{parametrix} $G_\epsilon(t)$ is the $t$-dependent operator on $\Omega_\epsilon$ with Schwarz kernel
\begin{equation}\label{parametrix}
G_{\epsilon}(t,z,z')=\tilde\chi_{1}(z)H^{\epsilon Z}(t,z,z')\chi_{1}(z')
+\tilde\chi_{2}(z)H^{\Omega_{0}}(t,z,z')\chi_{2}(z').\end{equation}\end{definition}

Our construction is similar to 
the usual construction of the heat kernel, which is
explained, for example, in \cite{r}.
Denote by $*$ the operation of $t$-convolution: if two 
time-dependent operators $A$ and $B$ on a manifold $M$
have Schwarz kernels $A(t,z,z')$ and $B(t,z,z')$ respectively, and the integral
\begin{equation}\label{convsk}\int_{0}^{t}\int_{M}A(t-s,z,z'')B(s,z'',z')\ dz''\ ds.
\end{equation} converges for all $t$, then the $t$-convolution $A*B$ is well-defined with Schwarz kernel given by (\ref{convsk}).
It is easy to check that $t$-convolution is associative; denote by $A^{*k}$
the operator $A$, $t$-convolved with itself $k$ times. It is also easy to
check that $Tr (A*B)(t)=Tr (B*A)(t)$ whenever both $A*B$ and $B*A$ are trace class.

We first define the error $E_{\epsilon}(t,z,z')$ as
a kernel on $\Omega_{\epsilon}$:
\begin{equation}\label{error}
E_{\epsilon}(t,z,z')=(\partial_{t}+\Delta_{\Omega_{\epsilon},z})G_{\epsilon}(t,z,z').
\end{equation}
Then define
\begin{equation}\label{neumann}
K_{\epsilon}(t,z,z')=\sum_{k=1}^{\infty}(-1)^{k+1}E_{\epsilon}^{*k}(t,z,z').
\end{equation} 
We claim:
\begin{proposition}\label{hkform} For each $\epsilon$,
$H^{\Omega_{\epsilon}}(t,z,z')=G_{\epsilon}(t,z,z')
-(G_{\epsilon}*K_{\epsilon})(t,z,z')$.\end{proposition}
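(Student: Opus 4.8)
The plan is to verify that the claimed formula produces a kernel solving the heat equation with the correct initial condition, and then invoke uniqueness of the heat kernel on each (compact) $\Omega_\epsilon$. Fix $\epsilon$ and work on the compact manifold $\Omega_\epsilon$, so that all the relevant operators are trace class and the convolutions converge; I will suppress $\epsilon$ from the notation where convenient.

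First I would record the basic facts about the parametrix. Since $H^{\epsilon Z}$ and $H^{\Omega_0}$ are genuine heat kernels on their respective (noncompact, but with bounded geometry and no singularities in the relevant regions) manifolds, and since the cutoffs $\tilde\chi_i,\chi_i$ are arranged so that the products in \eqref{parametrix} make sense on $\Omega_\epsilon$ (the supports lie in the exactly-conic collar where $\epsilon Z$ and $\Omega_0$ are both isometric to a piece of the cone), the operator $G_\epsilon(t)$ is well-defined, and as $t\to 0^+$ it converges to the identity: on the diagonal the cutoffs satisfy $\tilde\chi_i\chi_i=\chi_i$ and $\chi_1+\chi_2=1$, so $G_\epsilon(0^+)=\mathrm{Id}$. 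Next, applying $\partial_t+\Delta_{\Omega_\epsilon}$ and using that $H^{\epsilon Z}$ and $H^{\Omega_0}$ solve their heat equations, all surviving terms in $E_\epsilon(t,z,z')$ come from the derivatives hitting the cutoffs $\tilde\chi_i$; hence $E_\epsilon$ is supported where $\nabla\tilde\chi_i\neq 0$, i.e. away from the diagonal (since $\tilde\chi_i=1$ wherever $\chi_i\neq 0$), so $E_\epsilon(t,z,z')$ is a smooth kernel, and moreover it vanishes to infinite order as $t\to 0^+$ because away from the diagonal the heat kernels $H^{\epsilon Z},H^{\Omega_0}$ and all their derivatives are $O(t^\infty)$ (Gaussian off-diagonal decay). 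This infinite-order vanishing is what makes the Volterra series \eqref{neumann} converge: each convolution power $E_\epsilon^{*k}$ is a smooth kernel vanishing like $t^{Nk}$ for any $N$, with norms controlled by the usual $t^k/k!$-type bound from the $k$-fold time integral, so $K_\epsilon(t,z,z')=\sum_{k\ge 1}(-1)^{k+1}E_\epsilon^{*k}$ converges uniformly on compact time intervals to a smooth kernel that again vanishes to infinite order at $t=0$.

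Now I would compute $(\partial_t+\Delta_{\Omega_\epsilon})$ applied to $U_\epsilon:=G_\epsilon - G_\epsilon*K_\epsilon$. Using Duhamel's principle, for any time-dependent kernel $B$ vanishing fast enough at $t=0$ one has $(\partial_t+\Delta_{\Omega_\epsilon})(G_\epsilon*B)=E_\epsilon*B+B$ (the $B$ coming from the boundary term in differentiating the convolution, the $E_\epsilon*B$ from $(\partial_t+\Delta)G_\epsilon=E_\epsilon$). Therefore $(\partial_t+\Delta_{\Omega_\epsilon})U_\epsilon=E_\epsilon-(E_\epsilon*K_\epsilon+K_\epsilon)$. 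By definition $K_\epsilon=\sum_{k\ge1}(-1)^{k+1}E_\epsilon^{*k}$ satisfies the identity $K_\epsilon = E_\epsilon - E_\epsilon*K_\epsilon$, i.e. $E_\epsilon*K_\epsilon+K_\epsilon=E_\epsilon$; rearranging the telescoping series gives exactly this. Hence $(\partial_t+\Delta_{\Omega_\epsilon})U_\epsilon=E_\epsilon-E_\epsilon=0$. For the initial condition: $G_\epsilon*K_\epsilon$ vanishes to infinite order as $t\to 0^+$ (convolution of something $O(1)$ as an operator with something vanishing to infinite order), so $U_\epsilon(0^+)=G_\epsilon(0^+)=\mathrm{Id}$. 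Thus $U_\epsilon$ is a solution of the heat equation on the closed manifold $\Omega_\epsilon$ with $U_\epsilon\to\mathrm{Id}$ as $t\to 0$, and by uniqueness of the heat kernel on a compact Riemannian manifold $U_\epsilon=H^{\Omega_\epsilon}$, which is Proposition \ref{hkform}.

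The main obstacle I anticipate is purely bookkeeping about the gluing region rather than anything deep: one must check carefully that all the kernel products in \eqref{parametrix} and all the convolutions in \eqref{convsk}, \eqref{neumann} are actually well-defined on $\Omega_\epsilon$ — in particular that $H^{\epsilon Z}(t,z,z')$ is only ever evaluated for $z'$ in the region ($r\le 17/16$, via $\chi_1$) where $\Omega_\epsilon$ genuinely coincides with $\epsilon Z$, and similarly for $H^{\Omega_0}$ via $\chi_2$, while $\tilde\chi_i$ being $1$ on the slightly larger set guarantees the cutoff derivatives land off the diagonal. One should also confirm the off-diagonal Gaussian bounds for $H^{\epsilon Z}$ hold uniformly enough (for fixed $\epsilon$ this is standard since $\epsilon Z$ has bounded geometry), giving the infinite-order vanishing of $E_\epsilon$ at $t=0$ needed for convergence of the Volterra series. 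None of this requires the microlocal machinery — that is only needed later for the \emph{uniformity} of these estimates as $\epsilon\to 0$ in Theorem \ref{structure}. Here $\epsilon$ is fixed and the argument is the classical Levi parametrix argument adapted to a glued kernel.
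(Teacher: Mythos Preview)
Your proposal is correct and follows essentially the same approach as the paper: both verify that $G_\epsilon - G_\epsilon*K_\epsilon$ solves the heat equation with the right initial condition via the Duhamel computation and the telescoping identity $K_\epsilon = E_\epsilon - E_\epsilon*K_\epsilon$, then invoke uniqueness. The paper defers the convergence of the Volterra series and the $O(t^\infty)$ vanishing of $E_\epsilon$, $K_\epsilon$, and $G_\epsilon*K_\epsilon$ to its Lemmas~\ref{errorlemma} and~\ref{decay} (proved with uniform-in-$\epsilon$ constants, which is more than needed here), whereas you correctly observe that for fixed $\epsilon$ these follow from standard off-diagonal heat kernel decay and the usual $t^k/k!$ convolution estimate.
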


\begin{proof}
The proof of Proposition \ref{hkform} is relatively standard, and is based
on \cite{r}. We first notice that for any smooth
function $f(z')$ on $\Omega_{\epsilon}$,
\[\lim_{t\rightarrow 0}\int_{\Omega_\epsilon}G_{\epsilon}(t,z,z')f(z')\ dz'=f(z).\]
This is an immediate consequence of (\ref{parametrix}) and the definition
of the heat kernels on $\epsilon Z$ and $\Omega_{0}$. Further, we claim
that for each fixed $\epsilon$ and each $n$, there is a $C_{n,\epsilon}$ so that $|A_{\epsilon}(t,z,z')|<C_{n,\epsilon}t^{n}$ for
small $t$, where $A$ is any of $E$, $K$, or $G*K$; this is a consequence
of Lemmas \ref{errorlemma} and \ref{decay}.
Assuming these two claims, and that all convolutions are well-defined, 
we compute (suppressing $\epsilon$ in the notation):

\[(\partial_{t}+\Delta_{z})(G-G*K)(t,z,z')=E(t,z,z')
-(\partial_{t}+\Delta_{z})(G*K)(t,z,z')\]\[
=E(t,z,z')-\lim_{s\rightarrow t}\int_{\Omega_{\epsilon}}G(t-s,z,z'')K(s,z'',z')\
dz'-(E*K)(t,z,z')\]\begin{equation}\label{thisthingone}
=E(t,z,z')-K(t,z,z')-(E*K)(t,z,z')=0.\end{equation}

Moreover, for any smooth function $f(z')$ on $\Omega_{\epsilon}$,
\begin{equation}\label{thisthingtwo}
\lim_{t\rightarrow 0}\int_{\Omega_{\epsilon}}(G-G*K)(t,z,z')f(z')
=f(z)-\int_{\Omega_{\epsilon}}\lim_{t\rightarrow 0}(G*K)(t,z,z')f(z')\ dz'=f(z).\end{equation}
Uniqueness of the heat kernel completes the proof of Proposition \ref{hkform},
modulo the proofs of Lemmas \ref{errorlemma} and \ref{decay}.
\end{proof}

Proposition \ref{hkform} allows us to write
\[Tr H_{\epsilon}(t)=Tr G_{\epsilon}(t) -Tr (G_{\epsilon}*K_{\epsilon})(t)
=Tr G_{\epsilon}(t)-Tr (K_{\epsilon}*G_{\epsilon})(t).\]
In fact, we notice that $K_{\epsilon}=E_{\epsilon}-(K_{\epsilon}*E_{\epsilon})$,
and so \begin{equation}\label{threeterms}
Tr H_{\epsilon}(t)=Tr(G_{\epsilon}(t))-Tr((E_{\epsilon}*G_{\epsilon})(t))+
Tr((K_{\epsilon}*(E_{\epsilon}*G_{\epsilon}))(t)).\end{equation}
We will analyze each of the three terms and prove
that each is polyhomogeneous conormal on $Q_{0}$; (\ref{structure})
follows immediately. Throughout, we use the local coordinates on $Q_0$ from the previous sections; see Figure \ref{qnaught}.

\subsection{Heat kernels on $\epsilon Z$ and $\Omega_0$}
We first need to understand the asymptotic structure of the heat kernels of $\epsilon Z$ and $\Omega_0$. 
Since $\Omega_0$ is a manifold with a single conic singularity, its heat kernel has been extensively studied. As discussed in section 3, there are short-time heat trace asymptotics due to Cheeger and Bruning-Seeley \cite{ch2,brs}. However, for our parametrix construction, we need a complete description of the asymptotic structure of $H^{\Omega_0}(t,z,z')$ for short time; this description is provided by the work of Mooers \cite{mo}. Near the conic singularity, we use the polar coordinates $z=(r,y)$ and $z'=(r',y')$, so that the heat kernel is a function on
$\mathbb R_+^3(\sqrt t,r,r')\times N_y\times N_y'$. Let $M(t,r,r',y,y')$ be an iterated blow-up of this space: first blow up $\{r=r'=\sqrt t=0\}$, and then blow up the closure of the lift of $\{\sqrt t=0,r=r'>0\}$ (this process is illustrated in Figure \ref{hkconicfig}). Mooers then proved that:

\begin{figure}
\centering
\includegraphics[scale=0.65]{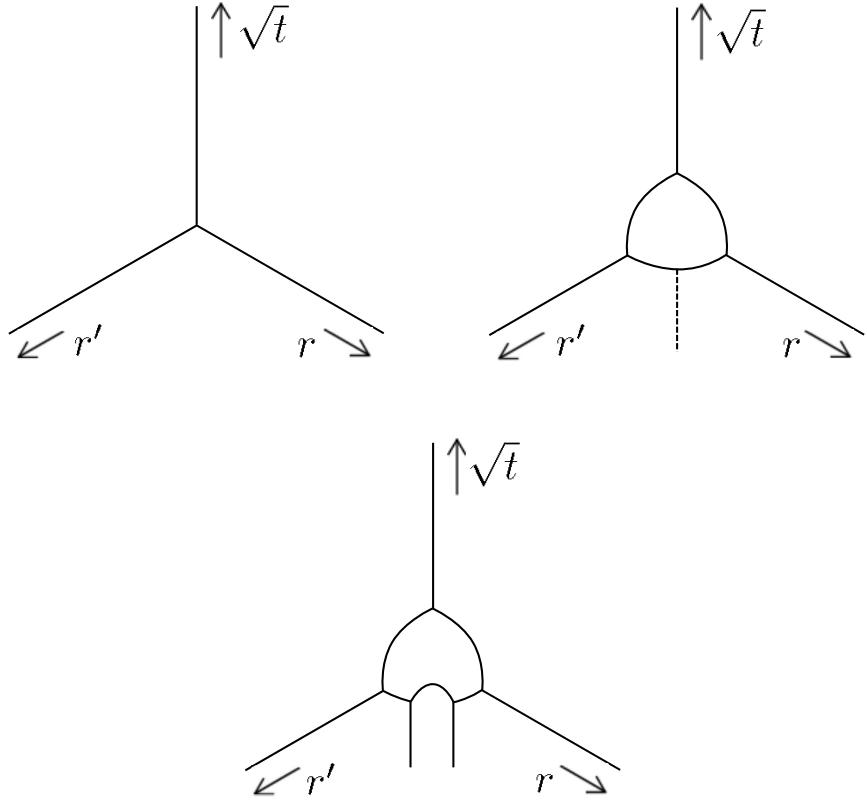}
\caption{Asymptotic structure of the heat kernel on a conic manifold.}\label{hkconicfig}
\end{figure}

\begin{proposition}\label{conehkstructure}
\cite{mo} If $\Omega_{0}$ is any manifold with a conic
singularity, and $(r,y)$ are polar coordinates on $\Omega_{0}$ near the
singularity, then $H^{\Omega_{0}}(t,r,r',y,y')$ is polyhomogeneous
conormal on $M(\sqrt t,r,r',y,y')$. The leading orders are $-n$ (in terms of $\sqrt t$) at the two blown-up faces, $\infty$ at the original $\sqrt t=0$ face, and 0 at the other two faces. \end{proposition}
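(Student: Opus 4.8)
The plan is to follow the standard geometric-microlocal strategy: realize $H^{\Omega_0}$ as a polyhomogeneous conormal distribution on the blown-up heat space $M$ by building an explicit parametrix, computing its error, and removing the error with a convergent Volterra series. Since $\Omega_0$ is a smooth compact manifold away from the cone point, the usual Levi (Minakshisundaram--Pleijel) parametrix already handles the heat kernel there, so the genuinely new work is concentrated near $P$, where $\Omega_0$ is exactly isometric to a neighborhood of the tip in the infinite cone $C_N$.

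First I would isolate the two model problems attached to the blown-up faces of $M$. At the front face of the parabolic diagonal blow-up, the model is the Euclidean heat kernel $(4\pi t)^{-n/2}\exp(-|z-z'|^2/4t)$; it is classical, hence polyhomogeneous, on the blown-up space, with leading order $-n$ in $\sqrt t$ at the front face and infinite-order vanishing elsewhere along $\{\sqrt t=0\}$. At the front face of the first blow-up, $\{r=r'=\sqrt t=0\}$, the model is the exact heat kernel of $C_N$ with the Friedrichs extension, given in closed form by the Cheeger functional calculus: if $\{\phi_j\}$ is an orthonormal basis of eigenfunctions of $\Delta_N$ with eigenvalues $\mu_j$ and $\nu_j=\sqrt{\mu_j+((n-2)/2)^2}$, then the $j$th radial mode is an explicit kernel built from the modified Bessel function $I_{\nu_j}$. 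One must show this model is polyhomogeneous conormal on the corresponding blow-up of $\mathbb R_+^3(\sqrt t,r,r')\times N\times N$, with leading order $-n$ at the front face, $\infty$ at the lifted $\{\sqrt t=0\}$ face, and $0$ at the lifted $\{r=0\}$ and $\{r'=0\}$ faces; this rests on the asymptotics of $I_\nu$ together with uniform-in-$j$ control of the mode sum, and is the most delicate ingredient in the whole argument.

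With the models in hand, I would patch them together with radial cutoffs — the cone model near $r=r'=0$, the interior diagonal parametrix elsewhere — to obtain an initial parametrix $G(t,z,z')$ that is polyhomogeneous conormal on $M$ with exactly the claimed leading orders. Applying $\partial_t+\Delta_{\Omega_0}$ gives an error $E=(\partial_t+\Delta)G$ which, because the models solve the heat equation exactly, is polyhomogeneous conormal on $M$, supported where the cutoffs vary, and vanishing to infinite order at the lifted $\{\sqrt t=0\}$ face and at both blown-up front faces. One then writes $H^{\Omega_0}=G-G*K$ with $K=\sum_{k\ge 1}(-1)^{k+1}E^{*k}$, exactly as in the construction of Proposition \ref{hkform}.

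The remaining point is that $t$-convolution preserves the class of kernels polyhomogeneous conormal on $M$ with the relevant vanishing, and that the series defining $K$ converges; this is an application of Melrose's pushforward theorem \cite{me}. One introduces a triple heat space carrying three blow-down maps to $M$, checks that the lifts of the two kernels being convolved are polyhomogeneous with computable index sets, multiplies, and pushes the product forward along the projection omitting the middle variable; the infinite-order vanishing of $E$ at the lifted $\{\sqrt t=0\}$ face supplies a positive power of $t$ at each stage, so $\sum_k E^{*k}$ converges, and the same machinery shows $G*K$ is polyhomogeneous on $M$ and vanishes at least to the order of $G$ at every face. Hence $H^{\Omega_0}=G-G*K$ inherits the leading orders of $G$: $-n$ in $\sqrt t$ at the two blown-up faces from the $t^{-n/2}$ normalization of the heat kernel, $\infty$ at the original $\{\sqrt t=0\}$ face from off-diagonal heat decay, and $0$ at the two remaining faces from boundedness of the Friedrichs cone heat kernel up to the tip. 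This yields the statement of \cite{mo}.
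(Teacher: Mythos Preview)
The paper does not actually prove this proposition; it is quoted as a result of Mooers \cite{mo} and used as input for the rest of the argument. So there is no ``paper's own proof'' to compare against here.

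That said, your outline is essentially the strategy Mooers employs in \cite{mo}: identify the model problems at the two front faces (Euclidean heat kernel at the lifted diagonal, exact cone heat kernel at the cone-tip front face via Cheeger's Bessel-function representation), patch them into a parametrix, and correct with a Volterra series whose composition properties are controlled via a triple heat space and the pushforward theorem. You have correctly flagged the genuinely hard step --- the uniform-in-$j$ control of the Bessel mode sum needed to verify polyhomogeneity of the model cone heat kernel --- and this is where most of the work in \cite{mo} lies. One point to be careful about: the leading order $0$ at the faces $\{r=0\}$ and $\{r'=0\}$ is not quite ``boundedness of the Friedrichs cone heat kernel up to the tip'' in the naive sense, since the heat kernel is written relative to the measure $r^{n-1}\,dr\,dy$; you should check that the index sets you extract from the Bessel asymptotics are stated consistently with whatever density convention is in force.
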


As for the heat kernel on $\epsilon Z$, we need to understand $H^{\epsilon Z}(t,z,z')$ for bounded $t$. However, by the parabolic scaling of the heat kernel, we have
\[H^{\epsilon Z}(t,z,z')=\epsilon^{-n}H^Z(\frac{t}{\epsilon^2},\frac{z}{\epsilon},\frac{z'}{\epsilon}).\]
Therefore, to understand $H^{\epsilon Z}$ for bounded time, uniformly as $\epsilon$ goes to zero, we need to understand $H^Z$ for both short and long time. The short-time structure is relatively well-understood as a result of the work of Albin \cite{alb}, and in \cite{s1} we investigate the long-time structure by studying the low-energy resolvent on $Z$. 

We now summarize these results. On $Z$, let $x=r^{-1}$, so that $x$ is a 'boundary defining function' for infinity. Then the heat kernel is a function on $(0,\infty)_t\times\mathbb R_+^2(x,x')\times N_y\times N_{y'}$. Assume first that $t<T$ for some fixed positive $T$; we construct the appropriate space in a series of steps. We form the 'scattering double space' $Z^2_{sc}(x,y,x',y')$ by starting with $\mathbb R_+^2(x,x')\times N_y\times N_{y'}$ and performing two blow-ups. First blow up $\{x=x'=0\}$. Then let $D$ be the 'lifted diagonal' in this blown-up space given by the closure of the lift of the interior diagonal $\{x=x'>0,y=y'\}$, and blow up the intersection of $D$ with the boundary $x=x'=0$; the result is $Z^2_{sc}(x,y,x',y')$, and we call the face formed in this step sc. Finally, form $Z^2_{sc}(x,y,x',y')\times[0,T)_t$, and blow up the $t=0$ diagonal given by the closure of $\{t=0,x=x'>0,y=y'\}$; the face created by the final blow-up is called ff. We call the final space $Z^2_{sh}$ (for `short-time heat'); note that $Z^2_{sh}=[Z_{sc}^{2}(x,y,x',y')\times[0,1]_{\sqrt t}; \{\sqrt t=0,x=x',y=y'\}$. This space was first identified by Albin in \cite{alb} as the likely candidate for the polyhomogeneous structure of the short-time heat kernel on $Z$. In \cite{s1}, we prove as an immediate consequence of \cite{alb} that:
\begin{theorem}\label{zshorttime} For $t<1$, the heat kernel
$H^{Z}(t,x,y,x',y')$ is polyhomogeneous conormal on $Z^2_{sh}$. Moreover, there is infinite-order
decay at all faces except sc, where the leading order is 0, and ff, where the leading order is $\sqrt t^{-n}$.
\end{theorem}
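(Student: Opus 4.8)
The plan is to obtain Theorem \ref{zshorttime} as a direct application of Albin's parametrix construction for the short-time heat kernel on an asymptotically conic (scattering) manifold \cite{alb}. The first step is to check that $Z$ is such a manifold: writing $x = r^{-1}$ near infinity, the exactly conic metric $dr^2 + r^2\,dy^2$ becomes $x^{-4}\,dx^2 + x^{-2}\,dy^2$, a scattering metric on the manifold-with-boundary obtained by adjoining $N = \{x=0\}$ to $Z$. In fact $Z$ is the special \emph{exact} (product-type) case of the metrics Albin allows, with no lower-order terms in $x$ near the boundary, so his hypotheses hold trivially and the construction only simplifies. His theorem then produces a heat kernel that is polyhomogeneous conormal on a blown-up heat space.

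The second step is to verify that the heat space Albin associates to $Z$ is exactly the space $Z^2_{sh}$ defined above. Albin builds it by the same sequence of blow-ups: the two spatial blow-ups of $\{x = x' = 0\}$ and then of the corner $D \cap \{x = x' = 0\}$ produce the scattering double space $Z^2_{sc}$, and the final blow-up of the $t = 0$ lifted diagonal --- an ordinary blow-up in the coordinate $\sqrt t$, i.e.\ a parabolic blow-up in $t$ --- produces the face ff. Thus Albin's statement is precisely polyhomogeneity on $Z^2_{sh}$, and it remains only to match his index sets with the orders claimed here.

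The third step is to read off the leading orders face by face. At ff the local model is the Euclidean heat kernel $(4\pi t)^{-n/2} e^{-|z-z'|^2/4t}$, giving leading order $\sqrt t^{-n}$. At the original $t = 0$ face away from the diagonal, the off-diagonal Gaussian estimate $H^Z = O(e^{-d(z,z')^2/4t})$ with $d(z,z') > 0$ bounded below forces infinite-order vanishing in $\sqrt t$; the same estimate, now with $d(z,z') \to \infty$ of order $x^{-1}$, forces infinite-order vanishing at the two side faces and at the b-face coming from $\{x = x' = 0\}$. The only remaining face is sc, the resolution of the b-face at the diagonal, where two points near infinity can remain a bounded distance apart; there the relevant model is the scattering model heat operator, and Albin's construction gives leading order $0$. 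This exhausts the faces of $Z^2_{sh}$ and yields the claimed orders.

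I expect the only real obstacle to be bookkeeping rather than analysis: confirming that the blow-up order and the ordinary-versus-parabolic convention for the $t = 0$ diagonal in \cite{alb} agree with the definition of $Z^2_{sh}$ used here, and --- the one genuinely analytic point --- pinning down the leading order at sc, which is the only face where the heat kernel does not vanish to infinite order and hence the only place where the scattering model operator, rather than a crude off-diagonal estimate, must be examined. All of the hard microlocal work --- constructing the parametrix in the scattering heat calculus, solving away the error, and summing the resulting Neumann series --- is already done in \cite{alb}; the content of Theorem \ref{zshorttime} is the translation of that result into the form needed for the parametrix construction of section 4.
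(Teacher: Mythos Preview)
Your proposal is correct and matches the paper's own treatment: the paper does not give an independent proof but states that Theorem~\ref{zshorttime} is established in \cite{s1} ``as an immediate consequence of \cite{alb},'' which is precisely the route you outline. Your face-by-face reading of the leading orders and your identification of the sc face as the only nontrivial one are exactly the content of that ``immediate consequence''; the only thing to add is that the detailed matching of blow-up conventions you flag as a potential obstacle is indeed carried out in \cite{s1} rather than here.
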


On the other hand, assume that $t>T$ for some fixed positive $T$, and let $w=t^{-1/2}$, so that the heat kernel is a function of $(w,x,x',y,y')$. We define a blown-up space $Z^2_{w,sc}$ as follows. First, begin with the space $X_b^3(w,x,x')\times N_y'\times N_{y'}$; $X_b^3$ is a 'b-stretched product' in the terminology of Melrose and Singer \cite{ms}. More details may be found in \cite{gh1} or \cite{s1}. As before, let $D$ be the closure of the lift of $\{x=x'>0,y=y'\}$, then blow up the intersection of $D$ with the face $\{w>0,x=x'=0\}$ to obtain $Z^2_{w,sc}$. $Z^2_{w,sc}$ has eight boundary hypersurfaces and is illustrated in Figure \ref{ghspace}; the faces are also labeled as indicated in the figure. Let $F(w,x,y,x',y')=H^Z(w^{-2},x,y,x',y')$. In \cite{s1}, we prove the following.

\begin{figure}
\centering
\includegraphics[scale=0.45]{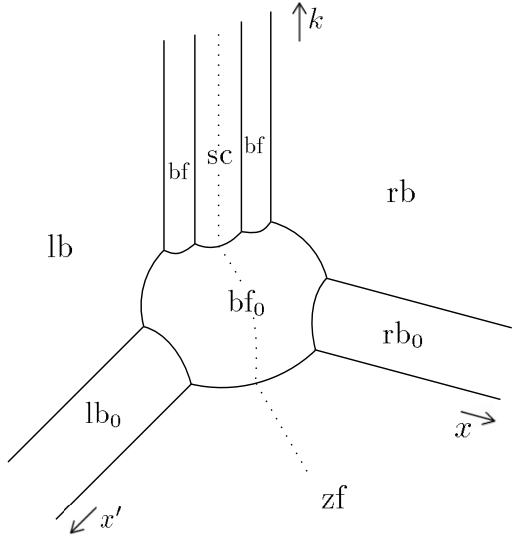}
\caption{The space $Z^{2}_{k,sc}$.}\label{ghspace}
\end{figure}

\begin{theorem}\label{maincor} For any $n\geq 2$, and for any fixed time $T>0$, $F(w,x,y,x',y')$ is polyhomogeneous conormal on $Z^2_{w,sc}$ for $t>T$, where $w=t^{-1/2}$. The leading orders at the boundary hypersurfaces are at least 0 at sc and $n$ at each of bf$_0$, rb$_0$, lb$_0$, and zf, with infinite-order decay at lb, rb, and bf.
\end{theorem}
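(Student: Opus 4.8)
The strategy is to reduce Theorem \ref{maincor} to the known polyhomogeneous structure of the \emph{low-energy resolvent} on asymptotically conic (scattering) manifolds, via the spectral measure representation of the heat kernel. Since $Z$ is complete and scattering, $\mathrm{spec}(\Delta_Z)=[0,\infty)$; assume $0$ is not an $L^2$-eigenvalue of $\Delta_Z$, so that $H^Z(t)\to 0$ as $t\to\infty$ (otherwise a finite-rank term of order $0$ in $w$ first has to be peeled off). Writing $r=1/x$, $r'=1/x'$ and $R_Z(\mu)=(\Delta_Z-\mu)^{-1}$, Stone's formula gives
\[
H^Z(t,z,z')=\frac{2}{\pi}\int_0^\infty e^{-tk^2}\,\mathrm{Im}\,R_Z(k^2+i0)(z,z')\,k\,dk,
\]
and I would split this with a cutoff $\chi(k)$ equal to $1$ on $[0,1/2]$ and supported in $[0,1]$, writing $H^Z=H^Z_{<}+H^Z_{>}$.

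The high-energy piece $H^Z_{>}$ requires no resolvent analysis. For $t\geq T$ the spectral truncation $\int_{[c,\infty)}e^{-t\lambda}\,dE_\lambda$ has operator norm $O(e^{-ct})$; pairing against the fixed-time heat kernel and using Gaussian heat-kernel upper bounds on $Z$ (which has bounded geometry) shows that $H^Z_{>}(t,z,z')$ is smooth, is $O(e^{-ct/2})$ uniformly in $z,z'$, is $O(t^{-n/2})$ near the lifted diagonal, and decays to infinite order in the spatial variables. Since $e^{-ct}=e^{-c/w^2}$ is $O(w^\infty)$ in the coordinates of $Z^2_{w,sc}$, the piece $H^Z_{>}$ is polyhomogeneous conormal on $Z^2_{w,sc}$ with infinite-order vanishing at every face except sc, where its order is $0$; it already has the claimed structure.

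The content is the low-energy piece. The input here is the result of Guillarmou--Hassell \cite{gh1}: the resolvent boundary values $R_Z(k^2\pm i0)$ are polyhomogeneous conormal in $(k,x,y,x',y')$ on the space $Z^2_{k,sc}$ of Figure \ref{ghspace}, with explicit index sets and scattering-face oscillations $e^{\pm ik(r+r')}$; in particular their difference, $\mathrm{Im}\,R_Z(k^2+i0)$, has leading order $n-2$ in $k$ at the zero-energy face. Changing variables $k=w\sigma$ in $H^Z_{<}(w^{-2})$ gives
\[
H^Z_{<}(w^{-2},z,z')=\frac{2w^2}{\pi}\int_0^\infty \chi(w\sigma)\,e^{-\sigma^2}\,\sigma\,\mathrm{Im}\,R_Z(w^2\sigma^2+i0)(z,z')\,d\sigma.
\]
The Schwartz factor $e^{-\sigma^2}$ lets one drop $\chi(w\sigma)$ modulo $O(w^\infty)$ and kills the large-$\sigma$ end, so what remains is the pushforward, along the $\sigma$-integration, of the pullback under $k=w\sigma$ of a polyhomogeneous conormal kernel. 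I would evaluate this with Melrose's pushforward theorem: after the standard iterated blow-up of $\mathbb R_+(\sigma)\times\mathbb R_+(w)\times\mathbb R_+(x)\times\mathbb R_+(x')\times N_y\times N_{y'}$ which makes $k=w\sigma$ a product of boundary defining functions and $\sigma$-integration a b-fibration, the target of that fibration is precisely $Z^2_{w,sc}$ --- indeed the definition of $Z^2_{w,sc}$, blowing up the lifted diagonal at the $w=0$, $x=x'=0$ corner of $X_b^3(w,x,x')\times N_y\times N_{y'}$, is chosen precisely so that this holds. The pushforward theorem then gives polyhomogeneity on $Z^2_{w,sc}$ and computes the index sets: the $w^2$ prefactor together with the $k^{n-2}$ behavior at the zero-energy face and the convergent moments $\int_0^\infty\sigma^{a}e^{-\sigma^2}\,d\sigma$ produce leading order $w^n$ at zf, bf$_0$, rb$_0$, lb$_0$; the oscillations $e^{\pm iw\sigma(r+r')}$ integrate against $e^{-\sigma^2}$ into a Gaussian $e^{-(r+r')^2/(4t)}$, giving order at least $0$ at sc and infinite-order decay at lb, rb, bf; and the resolvent's decay at the corresponding faces of $Z^2_{k,sc}$ is inherited. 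Adding $H^Z_{<}$ and $H^Z_{>}$ completes the proof.

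I expect the main obstacle to be this last step --- choosing the correct iterated blow-up of the parameter space, verifying that $\sigma$-integration is genuinely a b-fibration onto $Z^2_{w,sc}$ (in particular that no boundary face of $Z^2_{k,sc}$ maps into the interior and that the lifted diagonals match up), and running the index-set calculus carefully enough to extract the stated leading orders. This is the same kind of geometric bookkeeping as the parametrix computation of Section 4; by contrast the complementary short-time statement, Theorem \ref{zshorttime}, is a direct consequence of Albin's work \cite{alb} and needs no low-energy input.
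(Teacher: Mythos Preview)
Your outline is correct and matches the approach the paper takes (via the companion paper \cite{s1}): the long-time heat kernel is obtained from the low-energy resolvent of Guillarmou--Hassell \cite{gh1} through the spectral-measure representation, a rescaling $k=w\sigma$, and Melrose's pushforward theorem, with the high-energy piece disposed of by exponential decay. You have also correctly located the real work---constructing the blown-up parameter space so that $\sigma$-integration is a b-fibration onto $Z^2_{w,sc}$ and tracking the index sets---and your order count $w^2\cdot k^{n-2}\mapsto w^n$ at zf is the right one.
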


Combining the short- and long-time structure theorems, we obtain a complete description of the asymptotic structure of the heat kernel on $Z$, illustrated in Figure \ref{hkac}; we have indicated the leading order at each face (in terms of $t^{1/2}$ or $w=t^{-1/2}$). We call this space $Z^2_{heat}$. In fact, as explained in \cite{s1}, both of these theorems hold even if $Z$ is only asymptotically conic rather than exactly conic near infinity.

Finally, we would like to be able to generalize the construction in this section to the case of the heat kernel for the Laplacian acting on sections of certain vector bundles over $Z$. In particular, we are interested in the twisted $k$-form bundles needed to define the analytic torsion. Based on work of Guillarmou and Hassell in \cite{gh2}, together with as-yet-unpublished work of Guillarmou \cite{g3}, we expect that in many cases, the heat kernel for forms will have the same polyhomogeneity properties as above, but with different leading orders. In particular, the leading orders of the heat kernel for long time may be worse at rb$_0$, lb$_0$, and zf. Therefore, we do not use the order $n$ decay at those faces that we see in the case of the scalar Laplacian; instead, we use only the following strictly weaker assumption, which reflects the expected features of the problem for the twisted form bundles:
\begin{althyp} The heat kernel on $Z$ has leading order at least 0 at zf and at least $n-a$ at rb$_0$ and lb$_0$, where $a\in[0,n/2)$. However, the coefficient of the term of order 0 at zf decays to order strictly greater than $n$ at bf$_0$.
\end{althyp}

\begin{figure}
\centering
\includegraphics[scale=0.80]{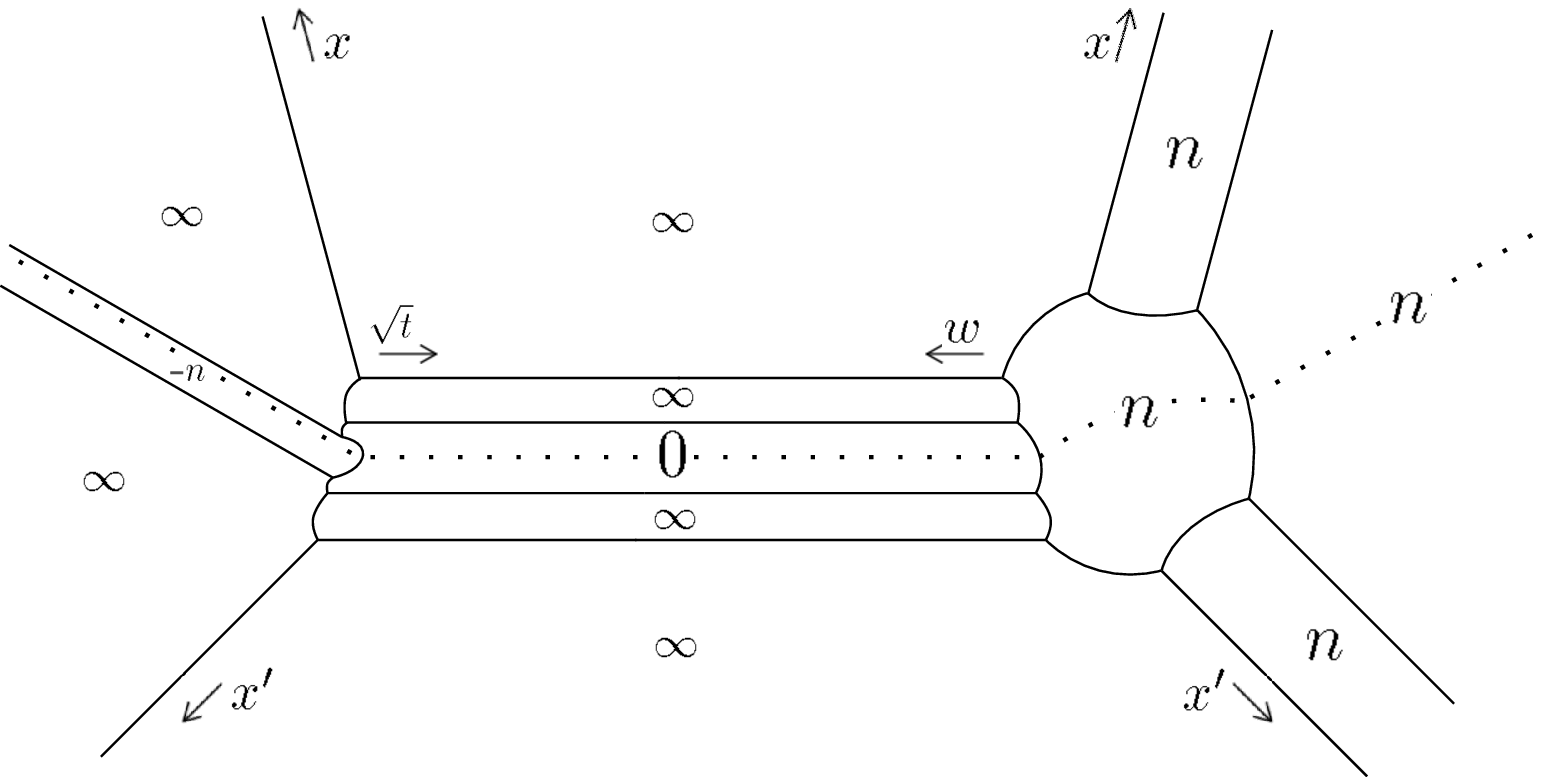}
\caption{The space $Z^2_{heat}$: asymptotic structure of the heat kernel on $Z$.}\label{hkac}
\end{figure}

\subsection{The error terms $E_{\epsilon}$ and $K_{\epsilon}$}

To begin proving Proposition \ref{hkform} (using only the alternate hypothesis), we first compute $E_{\epsilon}$. 
Since $H^{\epsilon Z}$ and $H^{\Omega_{0}}$ solve the heat equation
on the regions $r\leq 3/2$ and $r\geq 1/2$ respectively, we obtain:
\[E_{\epsilon}(t,z,z')=[\Delta^{\Omega_{\epsilon}}_{z},\tilde\chi_{1}(z)]
H^{\epsilon Z}(t,z,z')\chi_{1}(z')
+[\Delta_{\Omega_{\epsilon},z},\tilde\chi_{2}(z)]H^{\Omega_{0}}(t,z,z')\chi_{2}(z').\]
The commutators are first-order differential operators, with supports
contained in the $r$-intervals $[9/8,5/4]$ and $[3/4,7/8]$ respectively. 
In these regions, $\Omega_{\epsilon}$ is exactly conic for all $\epsilon<1/2$, 
so we may call the commutators $V_{1}(z)$ and $V_{2}(z)$. Then
\begin{equation}\label{writtenerror}
E_{\epsilon}(t,z,z')=V_{1}(z)H^{\epsilon Z}(t,z,z')\chi_{1}(z')+
V_{2}(z)H^{\Omega_{0}}(t,z,z')\chi_{2}(z').\end{equation}
Note that $V_{1}(z)$ is supported in $(9/8,5/4)$, while
$\chi_{1}(z')$ is supported in $r\leq 17/16$; similarly, $V_{2}(z)$ is
supported in $(3/4,7/8)$, and $\chi_{2}(z')$ is supported in $r\geq 15/16$.
Therefore, the heat kernels in the definition of $E_{\epsilon}$
are each supported away from the diagonal. Off-diagonal
heat kernels usually decay to infinite order at $t=0$, which motivates
the following lemma. Recall from section 2 that $R_{\epsilon}(z)$ is the weight function on $\Omega_\epsilon$ which is $\epsilon$ in $\{r\leq\epsilon\}$, 1 outside $\{r\leq1\}$, and $r$ in between; define a modification $W_\epsilon(z)$ to be $r$ between $\{r=\epsilon\}$ and $\{r=3/2\}$ and $3/2$ outside $\{r\leq 3/2\}$. Then:

\begin{lemma}\label{errorlemma} For each $N\geq 0$ and $t$ less than any fixed $T$, there is a constant $C_N$ such that for all $\epsilon<1/2$, \[\sup_{z,z'\in\Omega_{\epsilon}}
|(W_{\epsilon}(z'))^{a}E_{\epsilon}(t,z,z')|\leq C_Nt^{N}.\]
Here the constant $a$ is as in the Alternative Hypothesis. Moreover, for each fixed $z$ and $z'$ both outside
$r=1/2$, $E_{\epsilon}(t,z,z')$ is
phg conormal on $Q_{0}$, and the dependence on $z$ and $z'$ is
smooth. \end{lemma}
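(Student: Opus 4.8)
The plan is to prove Lemma~\ref{errorlemma} by exploiting the crucial observation, already noted in the text, that the heat kernels $H^{\epsilon Z}$ and $H^{\Omega_0}$ appearing in $E_\epsilon$ are sandwiched between cutoffs with \emph{disjoint supports}: $V_1(z)$ is supported in $\{9/8 < r < 5/4\}$ while $\chi_1(z')$ is supported in $\{r \le 17/16\}$, and similarly for the second term. Thus each term involves a heat kernel restricted to a region where $z$ and $z'$ are separated by a fixed positive distance (say $d_0 = 1/16$, in the $\Omega_\epsilon$ metric, uniformly in $\epsilon$ since the relevant annuli are exactly conic). The heart of the matter is therefore an off-diagonal Gaussian decay estimate for these heat kernels, uniform in $\epsilon$, together with a corresponding estimate for one spatial derivative in $z$ (since $V_1, V_2$ are first-order).

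The key steps, in order, are as follows. \textbf{Step 1:} Establish that for the heat kernel $H^{\Omega_0}$, on the region where $r(z) \in (3/4,7/8)$ and $r(z') \ge 15/16$, one has $|V_2(z) H^{\Omega_0}(t,z,z')| \le C_N t^N$ for all $N$ and all $t \le T$. This region is bounded away from the conic singularity, so $H^{\Omega_0}$ there is an ordinary heat kernel on a compact region of a smooth manifold (or can be compared to one via a partition of unity and the parametrix of \cite{mo}/Proposition~\ref{conehkstructure}, restricting to the faces with infinite-order decay at $\sqrt t = 0$ away from the diagonal); the standard off-diagonal Gaussian bound $|H(t,z,z')| \le C t^{-n/2} e^{-d(z,z')^2/5t}$ together with $d(z,z') \ge d_0 > 0$ gives decay faster than any power of $t$, and the same holds after applying the first-order operator $V_2$ by the usual derivative heat bounds. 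Since $z,z'$ here are at bounded distance from the singularity and $\Omega_0$ is fixed, no $W_\epsilon$ weight is needed for this term. \textbf{Step 2:} Treat the $\epsilon Z$ term. Here I rescale: $H^{\epsilon Z}(t,z,z') = \epsilon^{-n} H^Z(t/\epsilon^2, z/\epsilon, z'/\epsilon)$, and $V_1(z)$ is supported where $r(z) \in (9/8,5/4)$, i.e. at $|z/\epsilon| \in (9/(8\epsilon), 5/(4\epsilon))$, deep in the asymptotically conic end of $Z$, while $\chi_1(z')$ forces $|z'/\epsilon| \le 17/(16\epsilon)$. So on the support of the kernel, $z/\epsilon$ and $z'/\epsilon$ are separated by distance $\ge d_0/\epsilon$ in the $Z$-metric. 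Now invoke the polyhomogeneous structure of $H^Z$ on $Z^2_{heat}$ (Theorems~\ref{zshorttime} and~\ref{maincor}, combined): away from the diagonal the heat kernel has infinite-order decay at all the $t=0$ faces, and the weight $W_\epsilon(z')^a$ — which rescales to $(|z'/\epsilon|)^a$ up to bounded factors, i.e. to $x'^{-a}$ where $x' = |z'/\epsilon|^{-1}$ — is exactly the weight needed to absorb the possible $a$-order \emph{growth} of $H^Z$ at the faces rb$_0$, lb$_0$ permitted by the Alternative Hypothesis. Pairing the $\epsilon^{-n}$ prefactor against the Gaussian/polyhomogeneous decay in $t/\epsilon^2$ at small time, and against the order-$n$ (or weaker, under the Alternative Hypothesis, but still absorbed by $W_\epsilon^a$) decay at the blown-up faces at large rescaled time, yields the bound $C_N t^N$ uniformly in $\epsilon$. \textbf{Step 3:} For the final assertion — phg conormality on $Q_0$ with smooth $(z,z')$-dependence for $z,z'$ fixed outside $r=1/2$ — observe that for such fixed $z,z'$ the function $E_\epsilon(t,z,z')$ depends on $(\sqrt t, \epsilon)$ only through the restrictions of $H^{\epsilon Z}$ and $H^{\Omega_0}$, each of which (in the $\epsilon Z$ case, after rescaling to $H^Z(t/\epsilon^2, \cdot)$) is a restriction of a polyhomogeneous function on $Z^2_{heat}$, respectively on $M$, to a fixed transversal; since $\sqrt t = \sqrt\tau \cdot \epsilon$ and the point $(z/\epsilon, z'/\epsilon)$ traces a curve transverse to the relevant faces, the pullback to $Q$ — and hence to the blow-up $Q_0$ — is again polyhomogeneous conormal, and smoothness in $z,z'$ is inherited from smoothness in the interior of the $Z$- and $\Omega_0$-factors. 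The derivatives $V_1, V_2$ preserve all of this.

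The main obstacle I expect is \textbf{Step 2}: making the rescaling bookkeeping precise and genuinely uniform in $\epsilon$. One must track how the weight $W_\epsilon(z')^a$ transforms under $z' \mapsto z'/\epsilon$ and verify it really does cancel the worst allowed growth of $H^Z$ at rb$_0$ and lb$_0$ (this is precisely why the constant $a \in [0,n/2)$ from the Alternative Hypothesis appears in the statement), and one must confirm that combining the small-rescaled-time Gaussian decay with the large-rescaled-time polyhomogeneous decay covers the whole range $\tau = t/\epsilon^2 \in (0,\infty)$ as $t$ ranges over $(0,T]$ and $\epsilon$ over $(0,1/2)$ — i.e. that the estimate is truly global on $Z^2_{heat}$, not just near one end. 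A clean way to organize this is to note that $W_\epsilon(z')^a E_\epsilon(t,z,z')$, viewed appropriately, is the restriction of a polyhomogeneous function on (a version of) $Z^2_{heat}$ times the $\epsilon$-axis, supported away from the diagonal, so that its decay at $t=0$ (equivalently, at the union of $t=0$ faces) is of infinite order; the power bound $C_N t^N$ is then immediate from polyhomogeneity and compactness of the closure of the relevant region. I would also remark that the argument uses only properties of $H^Z$ guaranteed by the Alternative Hypothesis, consistent with the paper's stated goal of not relying on the full order-$n$ decay specific to the scalar Laplacian.
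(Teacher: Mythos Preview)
Your approach is essentially the paper's: treat the $\Omega_0$ term via Mooers' structure theorem (Proposition~\ref{conehkstructure}), rescale the $\epsilon Z$ term to $H^Z$ on $Z^2_{heat}$, and obtain the $C_N t^N$ bound by checking orders at each boundary hypersurface, with the phg conormality on $Q_0$ coming from the pullback theorem applied to the b-map $(t,\epsilon)\mapsto(t/\epsilon^2,\epsilon x,y,\epsilon x',y')$ from $Q_0$ into a p-submanifold of $Z^2_{heat}$. Your ``clean way'' in the last paragraph is exactly how the paper organizes the supremum estimate.

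There is, however, a concrete bookkeeping error in Step~2 that you should fix before the argument goes through. You write that $W_\epsilon(z')^a$ ``rescales to $(|z'/\epsilon|)^a$ up to bounded factors'' and that it is needed at rb$_0$, lb$_0$. Neither is right. First, the support conditions force $z$ (not $z'$) into the annulus $9/8<r<5/4$, so after rescaling the \emph{left} variable goes to infinity on $Z$; together with the bound $t\le T$ this keeps the arguments of $H^Z$ bounded away from rb, rb$_0$, sc, ff, and zf. The only faces you actually hit are bf$_0$ and lb$_0$. Second, $W_\epsilon(z')=\epsilon\cdot\max\{1,|z'/\epsilon|\}$, and the factor of $\epsilon$ is what matters: at lb$_0$ the quantity $|z'/\epsilon|$ has order zero (the right variable stays in the core of $Z$), while $\epsilon\sim\epsilon x$ is a defining function of order one, so $W_\epsilon(z')^a$ contributes order exactly $a$ there, cancelling the deficiency $n-a$ in the decay of $H^Z$ allowed by the Alternative Hypothesis. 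At bf$_0$ the weight has order zero (the $\epsilon$ and $|z'/\epsilon|$ contributions cancel) and the heat kernel already has order $n$, so no help is needed. If you track the weight as $x'^{-a}$ alone, the count at lb$_0$ comes out short by $a$ and the argument fails. Once this is corrected, the face-by-face order count (together with $\tau^{-N}\cdot(\epsilon x)^{-2N-n}$) gives total order $\ge 0$ at every accessible face, and compactness yields the uniform bound.
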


We require $r\geq 1/2$ in the second part of the statement in order to
regard $z$ and $z'$ as coordinates on the fixed space $\Omega_{0}$.
We now claim:

\begin{corollary} For each fixed $z$ and $z'$ both outside $r=1/2$,
$E_{\epsilon}(t,z,z')$ is phg conormal on $Q$ with infinite-order
decay at $\sqrt t=0$.
\end{corollary}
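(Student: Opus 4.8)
The plan is to combine the two conclusions of Lemma~\ref{errorlemma}: for fixed $z,z'$ outside $r=1/2$, the function $E_\epsilon(t,z,z')$ is polyhomogeneous conormal on $Q_0$, and it satisfies the uniform estimate $\sup|W_\epsilon^a E_\epsilon|\le C_N t^N$. Since $z'$ is fixed with $r(z')>1/2$, the weight $W_\epsilon(z')$ is bounded below by $1/2$ uniformly in $\epsilon$, so the estimate reduces to $|E_\epsilon(t,z,z')|\le C_N' t^N$ for every $N$, uniformly for $\epsilon\in[0,1/2)$ and $t$ bounded; this holds in a full neighborhood of the two faces of $Q_0$, namely $L$ and the front face $F$, that lie over $\{\sqrt t=0\}\subset Q$.

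First I would use this estimate to show that the index sets of $E_\epsilon$ at both $L$ and $F$ are empty, i.e. that $E_\epsilon$ vanishes to infinite order, with all $b$-derivatives, at $L$ and at $F$. For $L$ this is immediate from the pointwise bound together with conormality. For $F$: if the $F$-index set contained an exponent $\gamma$ (in terms of $\sqrt t$), then restricting the polyhomogeneous expansion on $Q_0$ to a ray approaching an interior point of $F$ along which $\sqrt t\sim\epsilon$ would produce a genuine term of size comparable to $(\sqrt t)^{\gamma}$ — polyhomogeneity guarantees the leading coefficient is not cancelled — contradicting the $O(t^N)$ bound for all $N$. Hence the $F$-index set is empty as well, and conormality upgrades the pointwise vanishing to vanishing with all $b$-derivatives.

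It then remains to observe that a function which is polyhomogeneous conormal on $Q_0$ and vanishes to infinite order at the front face $F$ is the pullback under the blow-down map $\beta\colon Q_0\to Q$ of a polyhomogeneous conormal function on $Q$. Away from the corner $\beta$ is a diffeomorphism, so $E_\epsilon$ is already phg at $\{\sqrt t=0\}$ (with empty index set, inherited from $L$) and at $\{\epsilon=0\}$ (with the index set inherited from $R$); near the corner, the triviality of the $F$-expansion means that in the $R$-expansion $\sum_j \epsilon^{s_j}(\log\epsilon)^{p_j} a_j(\sqrt t)$ each coefficient $a_j$ vanishes to infinite order at $\sqrt t=0$, which is precisely a joint polyhomogeneous expansion at the corner of $Q$ with empty index set in $\sqrt t$. (Equivalently, this is an instance of Melrose's pushforward theorem for the b-fibration $\beta$: the only face collapsed by $\beta$ is $F$, whose empty index set trivially satisfies the integrability hypothesis, so $\beta_*E_\epsilon=E_\epsilon$ is phg on $Q$ with empty $\{\sqrt t=0\}$-index set.) This gives polyhomogeneity on $Q$ together with the asserted infinite-order decay at $\sqrt t=0$. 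I expect the only step needing care to be the second one — excluding a nontrivial expansion at $F$ — since that is where polyhomogeneity, rather than mere conormality, is genuinely used; the descent in the last step is formal once the front-face expansion is known to be trivial.
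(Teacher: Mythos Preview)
Your proof is correct and follows essentially the same approach as the paper, which simply combines Lemma~\ref{errorlemma} with the general blow-down fact (cited there from \cite{me2} and Proposition~A.3 of \cite{s}) that a function phg conormal on $[M;P]$ vanishing to infinite order at the front face descends to a phg conormal function on $M$; you have just unpacked this descent step and the deduction of infinite-order vanishing at $F$ more explicitly than the paper does. One small caveat: your parenthetical appeal to the pushforward theorem is not quite right, since the blow-down map $\beta\colon Q_0\to Q$ is a b-map but not a b-fibration (the front face $F$ is sent into the corner of $Q$), so the descent genuinely relies on the direct expansion argument you give or the cited proposition rather than on the pushforward theorem.
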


The corollary follows from Lemma \ref{errorlemma} and the general blow-up fact, from \cite{me2}, that any function phg conormal on $[M;P]$ which vanishes to infinite order at the blown-up face is in fact phg conormal on $M$ without the blow-up (see Proposition A.3 of \cite{s}). We now prove Lemma \ref{errorlemma}.

\begin{proof} 

First consider the second term in $E_{\epsilon}$,
$V_{2}(z)H^{\Omega_{0}}(t,z,z')\chi_{2}(z')$. It 
is independent of $\epsilon$, and its support in $(z,z')$
is away from the conic tip in both variables and also away from the
spatial diagonal. By Proposition \ref{conehkstructure}, for such $(z,z')$,
$H^{\Omega_{0}}(t,z,z')$ is phg conormal with infinite-order decay
at $\sqrt t=0$, uniformly in the spatial variables; so $V_{2}(z)H^{\Omega_{0}}$
has the same properties. This proves the phg conormality part of the lemma,
and the sup-norm statement is automatic since $\Omega_{0}$ is compact.

It remins to consider $V_{1}(z)H^{\epsilon Z}(t,z,z')\chi_{1}(z')$. By
the scaling property of the heat kernel,
\[V_{1}(z)H^{\epsilon Z}(t,z,z')\chi_{1}(z')=\epsilon^{-n}V_{1}(z)H^{Z}(
t/\epsilon^{2},z/\epsilon,
z'/\epsilon)\chi_{1}(z').\]
We now use the coordinates $(x,y,x',y')$. On
$\epsilon Z$,
$V_{1}(z)$ is a bounded linear combination 
of derivatives $\partial_{r}$ and $\partial_{y}$,
which are equivalent to $r\partial_{r}$ and $\partial_{y}$
since $r\in[9/8,5/4]$. These lift to derivatives of the form $x\partial_{x}$ and
$\partial_{y}$ on $Z$, so $V_{1}$ lifts to a $b$-differential operator on $Z$, which
we call $V_{1}$ as well.

\medskip

First we prove the phg conormality statement in the lemma. Fix
an $x$ and $x'$ with $x\in (4/5,8/9)$ and $x'\in (16/17,2)$ and also fix $y$ and
$y'$. Of course, $\epsilon^{-n}$ is phg conormal on $Q_{0}$, so
we only need to analyze
\[V_{1}H^{Z}(t/\epsilon^{2},\epsilon x, y, \epsilon x', y').\] 

Let $S\subset Z^2_{heat}$ be the closure in $Z^2_{heat}$ of the set of points of the form $(s,a, y,a(x'/x), y')$ for some $s,a>0$. Since $x$ and $x'$ are fixed and $x'/x$ is bounded away from 1, $S$ is a p-submanifold\footnote{See \cite[Ch. 2A]{ma} for a definition of p-submanifolds of manifolds with corners and discussion of their properties. Essentially, they are submanifolds which may be written as coordinate submanifolds near the intersection with any boundary or corner.} of dimension 2 in $Z^2_{heat}$. Hence the heat kernel $H^Z$ is also polyhomogeneous conormal once we restrict to $S$; since $V_1$ is a b-differential operator, all the same analysis holds for $V_1H^Z$. 

Now let $\phi:Q_0\cup\{t\leq T\}\rightarrow Z^2_{heat}$ be given by the map $(t,\epsilon)\rightarrow (t/\epsilon^2,\epsilon x,y,\epsilon x',y')$. The image of $\phi$ is contained in $S$; since $t\leq T$, we have $\epsilon/(t/\epsilon^2)^{-1/2}\leq T$, so the image of $\phi$ is also bounded away from zf (see Figures \ref{ghspace} and \ref{hkac}). We would like to say that the pullback of $H^Z$ by the map $\phi$ is phg conormal. To do this, we need to check that $\phi$ is a \emph{b-map}.\footnote{See \cite{ma} or \cite{gri} for a definition and discussion of b-maps.} Once we have this, then the pullback theorem of Melrose (\cite{me3}; see \cite[Theorem 3.12]{gri}) will complete the proof.

In order to check that $\phi$ is a b-map, we need to compute the pullbacks of boundary defining functions for each boundary hypersurface of $S$ and check that they are products of boundary defining functions for the boundary hypersurfaces of $Q_0$. There are three of these hypersurfaces. For small $s$, the boundary defining functions are $s$ and $a$. For large $s$, the boundary defining functions are $s^{-1/2}$ and $a/s^{-1/2}=a\sqrt s$. Now the pullback of $s$ is $\tau$ and the pullback of $a$ is $\epsilon x$. For small $s$, the pullbacks are thus $\tau$ and $\epsilon x$, which are boundary defining functions for the faces L and F of $Q_0$ respectively. For large $s$, the pullbacks are $\tau^{-1/2}=\eta$ and $\epsilon\sqrt{\tau}=\sqrt t$, which are boundary defining functions for R and F respectively. Thus $\phi$ is a b-map and the proof is complete.

We must also check that the dependence on $z$ and $z'$ is smooth. However,
this follows immediately from the fact that derivatives $\partial_{r}$
and $\partial_{y}$ lift to b-derivatives on $Z$, as we have already
computed.

\medskip

Now we prove the supremum statement. Fix any $N\in\mathbb N$ and consider the function
\begin{equation}t^{-N}\epsilon^{-n}(W_\epsilon(z'))^aV_1 H^Z(\tau,\frac{z}{\epsilon},\frac{z'}{\epsilon})=t^{-N}\epsilon^{-n}(W_\epsilon(z'))^aV_1 H^Z(\tau,\epsilon x,y,\frac{z'}{\epsilon}).
\end{equation}We will show that this function is globally bounded for all $t<T$, all sufficiently small $\epsilon$, and all $(x,y,z')$ with $x\in(4/5,8/9)$ and $|z'|\leq 17/16$. Rewriting in terms of the arguments of $V_1 H^Z$, we have
\begin{equation}\label{lookatme}\tau^{-N}(\epsilon x)^{-2N-n}x^{-2N+n}(W_\epsilon(z'))^a
V_1 H^Z(\tau,\epsilon x,y,\frac{z'}{\epsilon}).\end{equation}
Note that, with our support condition, $W_{\epsilon}(z')$ is equal to $|z'|=x^{-1}(\epsilon x)|z'|/\epsilon$ when $|z'|/\epsilon\geq 1$ and $\epsilon=(\epsilon x)x^{-1}$ otherwise - i.e. $W_{\epsilon}(z')=x^{-1}(\epsilon x)\max\{1,|z'|/\epsilon\}$.

To show the global boundedness, we examine the behavior of (\ref{lookatme}) at each boundary hypersurface of $Z^2_{heat}(\tau,z/\epsilon,z'/\epsilon)$, showing that it is bounded at all such hypersurfaces. Global boundedness follows immediately by compactness. Note that by our support conditions, the arguments of $V_1 H^Z$ are bounded away from the faces ff, sc, rb, rb$_0$, and zf (the bound away from zf is because $t$ is bounded; note that $\tau^{-1/2}\geq\epsilon T^{-1/2}>\epsilon x T^{-1/2}$), so (\ref{lookatme}) is zero in a neighborhood of those faces. Moreover, $H^Z$ has infinite-order decay at all other faces for small $\tau$, as well as lb, bf, and rb. It remains only to check boundedness at bf$_0$ and lb$_0$.

At bf$_0$, the heat kernel $H^Z(\tau,\epsilon x,y,\frac{z'}{\epsilon})$ has leading order $n$ (and hence so does $V_1H^Z$, since $V_1$ is a b-differential operator). The factor of $\tau^{-N}$ is equal to $\eta^{2N}$ and hence it has order $2N$. The factor of $(\epsilon x)^{-2N-n}$ has order $-2N-n$, which precisely cancels the previous orders. And $W_{\epsilon}(z')$ is globally bounded by $3/2$ (in fact, its order is precisely 0 at bf$_0$ - $\epsilon x$ has order 1 at bf$_0$, but $|z'|/\epsilon$ has order $-1$). We conclude that (\ref{lookatme}) is bounded near bf$_0$. 

As for lb$_0$, the Alternative Hypothesis implies that $V_1H^Z$ has order $n-a$. The factor of $\tau^{-N}$ has order $2N$, and the factor of $(\epsilon x)^{-2N-n}$ has order $-2N-n$. Finally, at lb$_0$, note that both $1$ and $|z'|/\epsilon$ have order zero ($|z'|/\epsilon$ blows up at bf$_0$, but not at lb$_0$). Since $(\epsilon x)$ has order 1 at lb$_0$, $W_{\epsilon}(z')^a$ has order $a$ at lb$_0$. Therefore the total order of (\ref{lookatme}) at lb$_0$ is $(n-a)+2N+(-2N-n)+a=0$, which shows the boundedness at lb$_0$. This completes the proof. \end{proof}

Now that we have analyzed $E_{\epsilon}$, we form the Neumann series for
$K_{\epsilon}$ and prove analogous short-time bounds for $K_{\epsilon}$.
Fix $T>0$.

\begin{lemma}\label{kerrorlemma}
a) The expression $E_{\epsilon}^{*k}(t,z,z')$ 
is defined for each $k$, and 
the sum $K_{\epsilon}(t,z,z')=\sum_{k=1}^{\infty}(-1)^{k+1}E_{\epsilon}^{*k}(t,z,z')$ 
converges.

b) For $t<T$ and any $M\in\mathbb N$, there is a $C_{M}$ independent
of $\epsilon$ such that 
\[\sup_{z,z'\in\Omega_{\epsilon}}|(W_\epsilon(z'))^aK_{\epsilon}(t,z,z')|\leq Ct^{M}.\]

c) For each
$z$, $z'$ with $r\geq 1/2$, $K_{\epsilon}(t,z,z')$ is phg conormal
on $Q$, with smooth dependence on $z$ and $z'$
in this region. Moreover, let the index set for $E_{\epsilon}(z,z',t)$ at $\epsilon=0$ be
$\mathcal F=\{(s_{j},p_{j})\}$; then the index set for $K_{\epsilon}(z,z',t)$
at $\epsilon=0$ is $\mathcal F_{+}$, 
the set of all finite sums of elements of $\mathcal F$.

\end{lemma}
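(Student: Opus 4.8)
The plan is to establish the three parts of Lemma \ref{kerrorlemma} in order, with each part building on the previous. For part (a) and (b), I would proceed by a standard Neumann series estimate. Using Lemma \ref{errorlemma}, we have the bound $|(W_\epsilon(z'))^a E_\epsilon(t,z,z')| \leq C_N t^N$ for any $N$, uniformly in $\epsilon$. The first technical point is that the weight $W_\epsilon(z')^a$ in the intermediate variable behaves well under convolution: when we convolve $E_\epsilon * E_\epsilon$, the inner integration variable $z''$ ranges over a region where $W_\epsilon(z'')$ is bounded above and below (since the supports of $V_1$ and $V_2$ force $r$ to lie in fixed compact intervals away from $0$ and from the boundaries), so powers of $W_\epsilon(z'')$ can be absorbed into constants. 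This means that after the first convolution the weight at the innermost point is harmless, and only the weight $W_\epsilon(z')^a$ at the final point persists. Then the usual induction gives $|E_\epsilon^{*k}(t,z,z')| \leq C^k V^{k-1} t^{kN - (k-1)}/(k-1)!$ or similar — more carefully, choosing $N$ large enough (say $N \geq 1$) and tracking the volume factor $V = \mathrm{Vol}(\Omega_\epsilon)$, which is uniformly bounded for $\epsilon < 1/2$, we obtain a bound of the form $C^k t^{k(N-1)+1}/(k-1)!$ that sums to something $\mathcal O(t^M)$ for any prescribed $M$ once $N$ is taken large relative to $M$. This simultaneously proves convergence (part (a)) and the uniform bound (part (b)).

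For part (c), I would combine the phg conormality of $E_\epsilon(t,z,z')$ on $Q$ (for fixed $z,z'$ outside $r=1/2$, from the Corollary following Lemma \ref{errorlemma}, which upgrades the statement on $Q_0$ to $Q$ using infinite-order vanishing at $\sqrt t = 0$) with the stability of polyhomogeneity under $t$-convolution. The key structural fact is that $t$-convolution of two functions phg conormal on $Q$ with infinite-order decay at $\sqrt t = 0$ yields a function with the same property, and the index set at $\epsilon = 0$ of the convolution is contained in the set of sums of the two index sets — this is because the convolution is an integral in $t$ and $z''$, and the $\epsilon$-dependence simply multiplies. More precisely, if $E_\epsilon$ has expansion $\sum a_{j}(t,z,z'')\epsilon^{s_j}(\log\epsilon)^{p_j}$ at $\epsilon = 0$, then $E_\epsilon^{*2}$ has expansion whose terms are products $a_j * a_k$, contributing $\epsilon^{s_j + s_k}(\log\epsilon)^{p_j + p_k}$, so the index set of $E_\epsilon^{*k}$ consists of $k$-fold sums. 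The infinite series $K_\epsilon = \sum (-1)^{k+1} E_\epsilon^{*k}$ then has index set at $\epsilon=0$ equal to the set of all finite sums $\mathcal F_+$ of elements of $\mathcal F$; the smooth dependence on $z, z'$ is inherited term-by-term, using that $z$ only enters the first factor and $z'$ only the last, and the intermediate smooth dependence is integrated out.

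The main obstacle I anticipate is making rigorous the claim that $t$-convolution preserves polyhomogeneity on $Q$ and that the index sets add — in particular, handling the fact that the intermediate variable $z''$ is integrated over a region whose geometry (in $\Omega_\epsilon$) depends on $\epsilon$, and ensuring that this integration does not introduce extra logarithmic terms or shift the index set beyond $\mathcal F_+$. Here the saving grace is that the supports of $V_1$ and $V_2$ confine $z''$ to the region $r \in [3/4, 5/4]$ where $\Omega_\epsilon$ is exactly conic and isometric to a fixed piece independent of $\epsilon$, so the $z''$-integration is over a fixed compact domain with $\epsilon$-independent measure. This localization is precisely what makes the index-set-addition work cleanly: the $\epsilon$-dependence factors entirely out of the spatial integral. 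I would invoke Melrose's pushforward theorem to handle the convolution integral as an integral of a polyhomogeneous function over a fiber, concluding that the pushforward is polyhomogeneous with the expected index set, and then verify that the infinite sum converges in the phg category using the $t$-decay estimates from part (b). Finally I would note that the smoothness and the index-set claims together with the earlier uniform estimates give all three parts of the lemma.
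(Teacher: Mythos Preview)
Your proposal is correct and follows essentially the same approach as the paper: the Neumann-series induction for parts (a) and (b) using the weight-and-cutoff localization of the intermediate variable, and for part (c) the term-by-term expansion of $E_\epsilon$ in $\epsilon$ with the observation that the $z''$-integration is over a fixed $\epsilon$-independent region so the $\epsilon$-powers simply multiply. The paper carries out (c) by an explicit hands-on argument---writing $E_\epsilon$ as a finite partial expansion plus remainder, counting the at-most-$(l+1)^k$ combinations in $E_\epsilon^{*k}$, and bounding coefficients and $b$-derivatives of remainders directly---rather than invoking the pushforward theorem as you suggest; your direct description (``terms are products $a_j * a_k$, contributing $\epsilon^{s_j+s_k}(\log\epsilon)^{p_j+p_k}$'') is closer to what the paper actually does, and is all that is needed since the $\epsilon$-dependence factors out of the spatial integral.
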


Note that $\mathcal F$ is bounded below by zero by Lemma \ref{errorlemma}, 
so $\mathcal F_{+}$ is an index set and is also bounded below by zero.

\begin{proof} $|E_{\epsilon}(z,z',t)|\leq C_{0}$ uniformly for $t$ in any bounded
interval and $z$, $z'\in\Omega_{\epsilon}$; therefore, all the integrals in
the convolutions converge, and $E_{\epsilon}^{*k}(t,z,z')$ is defined. To see that the sum over $k$ converges is slightly more involved. Since $\epsilon$ is
fixed for the moment, we suppress it in the notation.
All of the spatial integrands in the
convolutions are zero when the variable of integration is not in 
the support of $V_{1}$ or $V_{2}$ (we call this the 'cutoff region'), so we can
regard the integrals as being over a subset of $\Omega_{0}$. Let $V$ be the
volume of $\Omega_{0}$. Moreover, the weight function $W_\epsilon(z')$ is between $1/2$ and $1$ in the cutoff region.

Now for any $M$ and $t$ in any bounded interval $[0,T]$, Lemma \ref{errorlemma} implies that there is
a $C_{M}$ so that $|(W_\epsilon(z'))^aE(t,z,z')|<C_{M}t^{M}$.
By integrating and using the weight function bound on the cutoff region, we get that 
\[|E^{*2}(t,z,z')|=\left |\int_{0}^{t}\int_{\Omega_{0}}
E(t-s,z,z'')E(s,z'',z')\ dz''\ ds \right |\]
\[\leq\int_{0}^{t}2^aVC_{M}(t-s)^{M}C_{0}(W_\epsilon(z'))^{-a}\ ds\leq C_{M}t^{M}\frac{2^aVC_{0}t}{M+1}(W_\epsilon(z'))^{-a}.\]

We then use induction; we claim that $|E^{*k}(t,z,z')|
\leq C_{M}(M!)t^{M}\frac{(2^aVC_{0}t)^{k-1}}{(M+k-1)!}(W_\epsilon(z'))^{-a}$
for all $k$. Indeed, assuming the inductive hypothesis, we have:
\[|E^{*(k+1)}(t,z,z')|=\left |\int_{0}^{t}\int_{\Omega_{0}}E(t-s,z,z'')
E^{*k}(s,z'',z)\ dz''\ ds\right |\]
\[\leq \int_{0}^{t}(2^aVC_0)C_{M}(M!)s^{M}\frac{(2^aVC_{0}s)^{k-1}}{(M+k-1)!}(W_\epsilon(z'))^{-a}\ ds
=C_{M}(M!)t^{M}\frac{(2^aVC_{0}t)^{k}}{(M+k)!}(W_\epsilon(z'))^{-a}.\]

So: \[|K(t,z,z')|\leq\sum_{k=1}^{\infty}C_{M}(M!)t^{M}\frac{(2^aVC_{0}t)^{k-1}}{(M+k-1)!}(W_\epsilon(z'))^{-a}
\leq\sum_{k=0}^{\infty}C_{M}t^{M}\frac{(2^aVC_{0}t)^{k}}{k!}(W_\epsilon(z'))^{-a}\]\[=C_{M}t^{M}e^{2^aVC_{0}t}(W_\epsilon(z'))^{-a}.\]
This shows not only that the series converges, but also that
for $t<T$, $|K(t,z,z')|\leq\hat C_{M}t^{M}(W_\epsilon(z'))^{-a}$
for all $M$, where $\hat C_{M}=C_{M}e^{2^aVC_{0}T}$;
since $\hat C_{M}$ is independent of $\epsilon$, 
this completes the proof of parts a) and b).

Now assume that $z$ and $z'$ both have $r\geq 1/2$. 
From Lemma \ref{errorlemma} and the corollary following it, $E_{\epsilon}(t,z,z')$ is phg conormal on $Q$;
we want to show that $K_{\epsilon}(t,z,z')$ is also phg conormal on $Q$. Fix positive integers
$N$ and $b$. Since $E_{\epsilon}(t,z,z')$ is phg conormal, there
exist $l\geq 0$, $(s_{i},p_{i})\in E$ for each $i$ between 1 and $l$,
and coefficients $a_{i}(t,z,z')$ for each $i$ between 1 and $l$ 
\begin{equation}\label{eexp}
E_{\epsilon}(t,z,z')\sim\sum_{i=1}^{l}a_{i}(t,z,z')\epsilon^{s_{i}}
(\log\epsilon)^{p_{i}}+R_{N}(t,z,z',\epsilon).\end{equation}
Moreover, for each $M\in\mathbb N_{0}$, 
there exists $C_{b,M}$ such that $|a_{i}(t,z,z')|\leq C_{M}t^{M}$
and $R_{N}\leq C_{b,M}t^{M}\epsilon^{N}$, each together with up to $b$
derivatives of the form $(\epsilon\partial_{\epsilon})$ or $(t\partial_{t})$.

When we take convolutions
of $E_{\epsilon}$ with itself, we plug in (\ref{eexp}) and work term-by-term.
For each $(r,q)\in F_{+}$ with $\Re r\leq N$, and each
$k$, $E_{\epsilon}^{*k}(t,z,z')$ has a term of the form 
$b_{k,r,q}(t,z,z')\epsilon^{r}(\log\epsilon)^{q}$; $b_{k,r,q}$ may be zero.
We want to bound $|b_{k,r,q}(t,z,z')|$. There are at most $l^{k}$ combinations
of $(s_{i},p_{i})$ with $1\leq i\leq l$ which add up to $(r,q)$, and these
are the only combinations which contribute to $b_{k,r,q}$. For each
combination, we can perform the same convolution analysis as before to bound
the coefficient; we obtain that
\[|b_{k,r,q}(t,z,z')|\leq C_{b,M}t^{M}l^{k}\frac{(VC_{b,0}t)^{k-1}}{(k-1)!}.\]

The rest of the combinations yield terms of order $\epsilon^{N}$, as they
either involve a remainder or their $\epsilon$ exponents add up to more than $N$.
We combine all of these terms into a new remainder term $R_{N,k}(t,z,z',\epsilon)$.
There are at most $(l+1)^{k}$ of these combinations. We again perform the
convolution analysis to obtain that
\[|R_{N,k}(t,z,z',\epsilon)|\leq C_{b,M}t^{M}(l+1)^{k}\frac{(VC_{b,0}t)^{k-1}}
{(k-1)!}.\]
Moreover, let $\mathcal V$ be any differential operator 
of order $\leq b$ that is a combination
of $(\epsilon\partial_{\epsilon})$ and $(t\partial t)$ derivatives. We claim
that
\begin{equation}\label{theclaim}
|\mathcal V R_{N,k}(t,z,z',\epsilon)|\leq C_{b,M}t^{M}(l+1)^{k}k^{b}\frac{(VC_{b,0}t)^{k-1}}
{(k-1)!}.\end{equation}
Indeed, each of the $b$ derivatives in $\mathcal V$ may act on each of $k$
different terms of the form $a_{i}(t,z,z')\epsilon^{s_{i}}(\log\epsilon)^{p_{i}}$
or $R_{N}$. If they act on $R_{N}$, we still have the $C_{b,M}t^{M}\epsilon^{N}$
bound. On the other hand, if they act on $a_{i}(t,z,z')\epsilon_{s_{i}}
(\log\epsilon)^{p_{i}}$, the $\epsilon\partial_{\epsilon}$ derivatives
only improve the expansion, while the $t\partial_{t}$ derivatives hit $a_{i}(t,z,z')$,
and we can still apply the $C_{M}t^{M}$ bound. This proves (\ref{theclaim}).

Finally, we sum from $k=1$ to $\infty$. By the Ratio Test,
\[\tilde b_{r,q}=\sum_{k=1}^{\infty}|b_{k,r,q}(t,z,z')|\]
converges and is $\mathcal O(t^{M})$ for any $M$.
And for any differential operator $\mathcal V$ of order at most $b$ as above,
\[\tilde R_{N}=\sum_{k=1}^{\infty}|R_{N,k}|\] converges by the Ratio Test
and is also $\mathcal O(t^{M}\epsilon^{N})$. Therefore, we may write
\[K_{\epsilon}(t,z,z')=
\sum_{\{(r,q)\in\mathcal F_{+}, \Re r\leq N\}}\tilde b_{r,q}
\epsilon^{r}(\log\epsilon)^{q}+\tilde R_{N}.\]
Since $b$ and $N$ were arbitrary, 
$K_{\epsilon}(t,z,z')$ is therefore polyhomogeneous
conormal on $Q$, with index set $\mathcal F_{+}$ at $\epsilon=0$ and infinite-order
decay at $\sqrt t=0$. This completes the proof of Lemma \ref{kerrorlemma}.
\end{proof}

\subsection{Construction of the heat kernel}
We now analyze $G_{\epsilon}(t) * K_{\epsilon}(t)$. If we can show that
it is well-defined for each $\epsilon$, we will have shown that 
$G_{\epsilon}(t) - (G_{\epsilon}(t) * K_{\epsilon}(t))$ is in fact
the heat kernel $H^{\Omega_{\epsilon}}(t,z,z')$. The next lemma proves
what we need, and also gives a useful bound:

\begin{lemma}\label{decay}
Fix $T>0$. Let $A_{\epsilon}(t)$ be any family of operators on $\Omega_{\epsilon}$ 
whose Schwarz kernel $A_{\epsilon}(t,z,z')$ has $z$-support between $r=3/4$ and $r=5/4$, and suppose also that for each $M$ there is a $C_M$ such that for all $\epsilon<1/2$, all $z$, and all $t<T$,
\begin{equation}\label{boundform}|(W_\epsilon(z'))^aA_{\epsilon}(t,z,z')|\leq C_{M}t^{M}.\end{equation}
Then $(G_{\epsilon}*A_{\epsilon})(t,z,z')$ and $(A_{\epsilon}*G_{\epsilon})(t,z,z')$ are both
well-defined for each $\epsilon$. Moreover, for each $M$, there is a $\tilde C_M$ such that for all $\epsilon<1/2$, all $z$ and $z'$ in $\Omega_\epsilon$ with $z'$ between $r=3/4$ and $r=5/4$, and all $t<T$,
\begin{equation}\label{boundforma}|(A_{\epsilon}*G_{\epsilon})(t,z,z')|\leq \tilde C_{M}t^{M}.\end{equation}

\end{lemma}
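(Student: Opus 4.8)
The plan is to reduce the entire statement to two $L^{1}$-in-space estimates for the parametrix $G_\epsilon$. Since $G_\epsilon$ is assembled from the heat kernels of $\epsilon Z$ and $\Omega_0$ with cutoffs satisfying $0\le\chi_i,\tilde\chi_i\le 1$, $\chi_1+\chi_2=1$ and $\tilde\chi_1+\tilde\chi_2\le 2$, the sub-Markovian property of these two heat semigroups gives, for every $u>0$ and every $\epsilon<1/2$,
\[
\int_{\Omega_\epsilon}|G_\epsilon(u,z,z'')|\,dz''\le 2\qquad\text{and}\qquad\int_{\Omega_\epsilon}|G_\epsilon(u,z'',z')|\,dz''\le 2 .
\]
Well-definedness of $G_\epsilon*A_\epsilon$ is then immediate: $A_\epsilon(s,z'',z')$ is supported in $r_{z''}\in[3/4,5/4]$, where $W_\epsilon(z'')\ge 3/4$, so by (\ref{boundform}) with $M=0$ we have $|A_\epsilon(s,z'',z')|\le C_0(W_\epsilon(z'))^{-a}$, and the double integral defining $(G_\epsilon*A_\epsilon)(t,z,z')$ is bounded by $2C_0(W_\epsilon(z'))^{-a}t<\infty$ for each fixed $\epsilon$ and $z'$. (Keeping the exponent $M$ instead gives $|(G_\epsilon*A_\epsilon)(t,z,z')|\le\frac{2C_M}{M+1}(W_\epsilon(z'))^{-a}t^{M+1}$, which is exactly the decay invoked in the proof of Proposition \ref{hkform}.) For $A_\epsilon*G_\epsilon$, for fixed $\epsilon$ one has $W_\epsilon\ge\epsilon$, hence $(W_\epsilon(z''))^{-a}\le\epsilon^{-a}$, and the double integral is bounded by $C_0\epsilon^{-a}\int_0^t\int_{\Omega_\epsilon}|G_\epsilon(s,z'',z')|\,dz''\,ds\le 2C_0\epsilon^{-a}t<\infty$.

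For the uniform bound (\ref{boundforma}) I would write
\[
|(A_\epsilon*G_\epsilon)(t,z,z')|\le\int_0^t C_M(t-s)^M\Bigl(\int_{\Omega_\epsilon}(W_\epsilon(z''))^{-a}|G_\epsilon(s,z'',z')|\,dz''\Bigr)ds,
\]
so it suffices to show the inner integral is $\le C_\sharp$ uniformly for $\epsilon<1/2$, $s<T$, and $r_{z'}\in[3/4,5/4]$; then (\ref{boundforma}) holds with $\tilde C_M=C_M C_\sharp T/(M+1)$. I would split the integral at $r_{z''}=5/8$. On $\{r_{z''}\ge 5/8\}$ we have $W_\epsilon(z'')\ge 5/8$, so this piece is $\le(5/8)^{-a}\int_{\Omega_\epsilon}|G_\epsilon(s,z'',z')|\,dz''\le 2(5/8)^{-a}$. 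On $\{r_{z''}<5/8\}$ the cutoff $\tilde\chi_2$ vanishes, so $G_\epsilon(s,z'',z')=H^{\epsilon Z}(s,z'',z')\chi_1(z')$, which is zero unless $r_{z'}\le 17/16$; in that case both points lie in the conic region of $\Omega_\epsilon$, and since any path between them must cross the conic annulus $\{5/8\le r\le 3/4\}$, along which $ds^{2}\ge dr^{2}$, we get $d_{\Omega_\epsilon}(z'',z')\ge 1/8$.

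The key analytic input is a scale-invariant Gaussian bound for $H^{\epsilon Z}$. Since $Z$ is a complete manifold that is exactly conic near infinity, it has Ricci curvature bounded below and Euclidean-type volume growth $\mathrm{Vol}(B_p(\rho))\ge c\rho^n$ uniformly in $p$ and $\rho$; standard Gaussian upper bounds (via the Li--Yau estimate) then give $H^Z(\tau,p,q)\le C\tau^{-n/2}e^{-d_Z(p,q)^2/(C\tau)}$, and parabolic rescaling, $H^{\epsilon Z}(s,z'',z')=\epsilon^{-n}H^Z(s/\epsilon^2,z''/\epsilon,z'/\epsilon)$ with $d_{\epsilon Z}=\epsilon\,d_Z$, converts this into $H^{\epsilon Z}(s,z'',z')\le Cs^{-n/2}e^{-d_{\epsilon Z}(z'',z')^2/(Cs)}$ with the same $C$. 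Since $d_{\epsilon Z}(z'',z')\ge 1/8$, this is $\le Cs^{-n/2}e^{-1/(64Cs)}\le C_N s^N$ for every $N$. Because $a\in[0,n/2)\subset[0,n)$ (this is all the Alternative Hypothesis contributes here), the weight is uniformly integrable on the relevant region: with $W_\epsilon(z'')=\max\{\epsilon,r_{z''}\}$ and $dz''=r^{n-1}\,dr\,dy$ there, $\int_{\{r_{z''}<5/8\}}(W_\epsilon(z''))^{-a}\,dz''\le C_2$ for all $\epsilon<1/2$. Hence this piece is $\le C_N s^N C_2$, and we may take $C_\sharp=2(5/8)^{-a}+C_N T^N C_2$, which finishes the proof.

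The step I expect to be the genuine obstacle is establishing the uniform-in-$\epsilon$ off-diagonal decay of $H^{\epsilon Z}$ together with the uniform integrability of $(W_\epsilon(z''))^{-a}$: one must verify that the geometry of $Z$ yields a truly scale-invariant Gaussian bound (this is where exact conicity near infinity enters, through Euclidean volume growth), and one must keep the balance between $a$ and $n$ — the hypothesis $a\in[0,n/2)$ is comfortably more than the $a<n$ that the argument actually needs. The remaining ingredients — the $L^{1}$-in-space bounds, the distance estimate across the annulus, and the time integration — are routine.
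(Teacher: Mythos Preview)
Your argument is correct for the scalar Laplacian, and it is genuinely different from the paper's. The paper does not invoke sub-Markovianity or Li--Yau Gaussian bounds at all. For $A_\epsilon*G_\epsilon$ it splits $G_\epsilon$ into the $\Omega_0$ and $\epsilon Z$ pieces, handles the $\Omega_0$ piece by viewing the $z''$-integral as the heat flow of a bounded cutoff, and for the $\epsilon Z$ piece proves exactly your ``inner integral'' bound by splitting at $r_{z''}=1/2$: outside, $W_\epsilon$ is harmless and the pushforward theorem applied to the polyhomogeneous structure of $H^Z$ on $Z^2_{heat}$ (Theorems~\ref{zshorttime} and~\ref{maincor}) shows the integral is bounded; inside, the off-diagonal bound comes from the same polyhomogeneity analysis as in Lemma~\ref{errorlemma}, which yields only $|H^{\epsilon Z}(s,z'',z')|\le C_Mt^M(W_\epsilon(z''))^{-a}$, so one must integrate $(W_\epsilon)^{-2a}$ and genuinely needs $a<n/2$.

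What each buys: your route is shorter and more elementary, and, as you note, only needs $a<n$. The paper's route stays entirely within the polyhomogeneous framework already set up and, crucially, does not use positivity of the heat kernel---this is the point of the footnote about replacing $|H^Z|$ by a smoothed majorant. Since the whole construction is explicitly carried out under the Alternative Hypothesis with an eye toward the form Laplacian (where neither sub-Markovianity nor the Li--Yau Gaussian bound is available), your argument would not transfer to that setting, whereas the paper's does. For the scalar statement as written, though, your proof is valid.
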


In particular, this lemma may be applied with $A_\epsilon=E_\epsilon$ or $A_\epsilon=K_\epsilon$.

\begin{proof}

First we consider $A_{\epsilon}*G_{\epsilon}$;
$G_{\epsilon}(t)$ has two terms, which we consider separately.
The term coming from $\Omega_0$ gives
\begin{equation}\label{expressiononesix}
\int_{0}^{t}\int_{\Omega_0}A(s,z,z'')\tilde\chi_{2}(z'')H^{\Omega_0}(t-s,z'',z')
\chi_{2}(z')\ dz''\ ds.\end{equation}
Consider \begin{equation} h_2(s,z')=\int_{\Omega_0}\tilde\chi_{2}(z'')H^{\Omega_0}(s,z'',z')\chi_2(z')\ dz''.\end{equation}
By the definition of the heat equation, $h_2(s,z)$  is precisely $\tilde\chi_2(z')$ times the solution of the heat equation on $\Omega_0$ with initial data $\chi_2(z'')$. That solution approaches $\chi_2(z')$ as $s$ goes to zero and hence is uniformly bounded, for $s<T$, by a constant. So $h_2(s,z')$ is uniformly bounded for $s<T$. Combining this with (\ref{boundform}) yields both the well-definedness and the bound for this piece (since $z''$ is outside $r=1/2$, $W_\epsilon(z'')$ is between $1/2$ and $3/2$).

On the other hand, the term coming from $\epsilon Z$ is, substituting $\tilde s=t-s$,
\begin{equation}\label{expressiononefive}
\int_{0}^{t}\int_{\Omega_\epsilon}A(t-\tilde s,z,z'')\tilde\chi_{1}(z'')H^{\epsilon Z}(\tilde s,z'',z')
\chi_{1}(z')\ dz''\ d\tilde s.\end{equation}
We claim that
\begin{equation}\label{expressiontwo}
\int_{\Omega_\epsilon}\big |(W_\epsilon(z''))^{-a}\tilde\chi_{1}(z'')H^{\epsilon Z}(s,z'',z')\chi_{1}(z')\ dz''\big |\end{equation}
is bounded uniformly by some constant $C$ for all $z$, all $\epsilon<1/2$, and all $s<T$. Assuming the claim for the moment, the absolute value of (\ref{expressiononefive}) is bounded, for any $M$, by
\[C_Mt^M\int_0^t C\ ds=CC_Mt,\] which is more than enough.

As for $G_\epsilon*A_\epsilon$, we only need well-definedness and not a uniform-in-$\epsilon$ bound. So fix $\epsilon$ and break $G_\epsilon$ into two pieces. The $\Omega_0$ piece may be analyzed as above, switching $s$ for $t-s$ and $\tilde\chi_2$ for $\chi_2$. In the $\epsilon Z$ piece, we get precisely
\begin{equation}\int_{0}^{t}\int_{\epsilon Z}\chi_{1}(z)H^{\epsilon Z}(s,z,z'')
\tilde\chi_{1}(z'')A(t-s,z'',z')\ dz''\ ds.\end{equation}
Notice that
\begin{equation}h_1(s,\epsilon)=\int_{\epsilon Z}\chi_{1}(z)H^{\epsilon Z}(s,z,z'')
\tilde\chi_{1}(z'')\end{equation}
is $\chi_1(z)$ times the solution of the heat equation on the fixed manifold $\epsilon Z$ with initial data $\chi_1(z'')$. As before, the limit as $s$ goes to zero of this solution is precisely $\tilde\chi_1(z)$, and hence the solution is bounded uniformly for short time; we again use the known bound on $A_\epsilon$ to complete the proof.

It remains only to prove the claim by analyzing (\ref{expressiontwo}). We do this separately for $z''$ inside and outside $r=1/2$, which correspond to off-diagonal and near-diagonal regimes respectively. First suppose that $z''$ is inside $r=1/2$; remember that $z'$ is between $r=3/4$ and $r=5/4$, so $H^{\epsilon Z}(s,z'',z')$ is an off-diagonal heat kernel on $\epsilon Z$. Therefore, we may proceed precisely as in the proof of Lemma \ref{errorlemma}, with $(z',z'')$ instead of $(z,z')$, to conclude that for each $M$ there exists $C_M$ such that
\[|H^{\epsilon Z}(s,z'',z')|<C_Mt^M(W_\epsilon(z''))^{-a}\]
for all $z''$ inside $r=1/2$, all $z'$ between $r=3/4$ and $5/4$, all $\epsilon<1/2$, and all $s<T$. Plugging this bound into (\ref{expressiontwo}) gives a bound for (\ref{expressiontwo}) of
\begin{equation}\label{itsfinite}\int_{\Omega_\epsilon}\chi_1(z'')(W_\epsilon(z''))^{-2a}.\end{equation}
Since $a<n/2$, it is easy to compute directly that (\ref{itsfinite}) is bounded, uniformly in $\epsilon$, which completes the proof of the claim for $z''$ inside $r=1/2$.

On the other hand, suppose that $z''$ is outside $r=1/2$; in this region, $W_\epsilon(z'')$ is between $1/2$ and $3/2$ and hence we may ignore the factor of $W_\epsilon(z'')^a$. We rescale the heat kernel and switch to an integral over $Z$; let $\chi_3$ be equal to $\tilde\chi_1$ outside $r=1/2$ and $0$ inside $r=1/2$. Letting $\sigma=s/\epsilon^2$, $\hat z=z''/\epsilon$, $\tilde z=z'/\epsilon$, $\hat x=|\hat z|^{-1}$ and $\tilde x=|\tilde z|^{-1}$, we get
\begin{equation}\label{rewrittenexptwo}\big |\int_N\int_{\epsilon/2}^{3\epsilon/2} \chi_3(\epsilon/\hat x)H^Z(\sigma,\hat x,y'',\tilde x,y')\chi_1(\epsilon/\tilde x)\ \hat x^{-n-1}\ d\hat x\ dy''\big |.
\end{equation}
The support conditions imply that $\tilde x$ is between $\epsilon/2$ and $3\epsilon/2$, while $\tilde x$ is between $4\epsilon/5$ and $4\epsilon/3$. So the ratio $\hat x/\tilde x$ is between $1/3$ and $2$. Let $\chi_4$ be a smooth cutoff, equal to 1 between $1/3$ and $2$ and 0 outside $(1/4,3)$. The expression (\ref{rewrittenexptwo}) is therefore bounded, independent of $\epsilon$, by \begin{equation}\label{newexpr}\int_N\int_{0}^{\infty} \chi_4(\hat x/\tilde x)\big |H^Z(\sigma,\hat x,y'',\tilde x,y')\big | \hat x^{-n-1}\ d\hat x\ dy''.
\end{equation}

Consider the integrand in (\ref{newexpr}). It is phg conormal on the space $Z^2_{heat}(\sigma,\hat x,y'',\tilde x,y')$ in Figure \ref{hkac}, but it is also supported near the diagonal; that is, away from lb, rb, lb$_0$, and rb$_0$. We want to apply the pushforward theorem of Melrose \cite{me3} (see also Theorem 3.10 in \cite{gri}) to understand this integral. To this end, let $X$ be the manifold with corners given by taking the face rb in Figure \ref{hkac} and crossing it with $N$. We claim that the map from $Z^2_{heat}(\sigma,\hat x,y'',\tilde x,y')$ to $X$ given by $(\sigma,\hat x,y'',\tilde x,y')\rightarrow (\sigma,\tilde x,y')$ is a b-fibration. To see this, note that $X$ has four boundary faces. We label them $H_1$,\ldots,$H_4$, where $H_1$ is rb $\cap$ rb$_0$, $H_2$ is rb $\cap$ bf$_0$, $H_3$ is rb $\cap$ sc, and $H_4$ is rb $\cap$ $\{\sqrt{\sigma}=0\}$. Then all the faces of $Z^2_{heat}$ with $\sigma=0$ are mapped into $H_4$. The faces lb, bf, and sc are mapped into $H_3$. As for $H_2$, its preimage is the union of bf$_0$ and lb$_0$. Finally, zf and rb$_0$ are mapped into $H_1$. We see that the image of each boundary hypersurface of $Z^2_{heat}$ is a boundary hypersurface of $X$, and it is clear from Figure \ref{hkac} that the projection map is a fibration over the interior of each boundary face of $Z^2_{heat}$. Therefore, from \cite[Def. 3.9]{gri}, the map is a b-fibration; moreover, integration in $\hat x$ and $y''$ is precisely pushforward by this map. By Melrose's pushforward theorem, (\ref{newexpr}) is phg conormal on $X$. The absolute value is irrelevant, even if the heat kernel is not assumed to be positive\footnote{We do not want to assume the heat kernel is positive, in part because we are interested in extending the argument to the heat kernel on forms, where the absolute value would be replaced by a norm. In this case, the absolute value or norm of $H^Z$ might not be phg conormal, due to the absolute value causing a lack of smoothness in the coefficients. However, we are only interested in a bound, so we can replace $|H^Z|$ by a strictly larger `smoothed-out' function which is phg conormal with the same orders as $H^Z$ at each face where $H^Z$ does not decay to infinite order, and with very high leading order (i.e. 100) at the faces where $H^Z$ does decay to infinite order. To find such a function, pick any function $g$ which is phg conormal on $Z^2_{heat}$, positive in the interior, and has the same orders as $H^Z$ at all boundary hypersurfaces, except that it has very high leading order at each boundary hypersurface where $H^Z$ has infinite order. Then $H^Zg^{-1}$ has non-negative order at each boundary hypersurface and hence is bounded, so there is a constant $C$ such that $|H^Z|<Cg$. Then we instead apply the pushforward theorem to $Cg$. The argument still works with very high leading orders rather than infinite leading orders at lb, rb, bf, et cetera.}, because we are only interested in the leading orders of (\ref{newexpr}) at the various boundary hypersurfaces of $X$. It remains to compute those orders.

- First consider $H_4$. Its preimage is the union of the face sca and the off-diagonal face at $\sigma=0$. On the non-diagonal face, the integrand of (\ref{newexpr}) has infinite order decay. However, the integrand has order $-n$ at sca, as a function. Since we are integrating with respect to $dz=d(z-z'')$ instead of $d((z-z'')/\sqrt \sigma)$ (which is the natural factor of integration near sca), we get an order shift of $n$ when applying the pushforward theorem. The result is that when applying the pushforward theorem we get a leading order of 0 at $H_4$. (This order shift can be understood by considering the heat kernel on a compact manifold - the integral of the heat kernel in one of the spatial variables converges to 1 even though the heat kernel itself blows up to order $n/2$ at the $t=0$ diagonal).

- As for $H_3$, lb, bf and sc are mapped into it. The integrand is bounded away from lb, has infinite-order decay at bf, and has order 0 with respect to scattering half-densities at sc, which are the appropriate half-densities to use there; the pushforward theorem gives a leading order of 0.

- At $H_2$, the faces in the preimage are bf$_0$ and lb$_0$. At bf$_0$, the integrand has leading order $n$ with respect to scattering half-densities, hence leading order $0$ with respect to the appropriate b-half-densities; since the integrand vanishes to infinite order at lb$_0$, the pushforward theorem again gives a leading order of 0.

- The faces zf and rb$_0$ are mapped into $H_1$. The integrand is supported away from rb$_0$ and has leading order at worst 0 at zf, so the pushforward leading order is 0.
\footnote{A key point in all these computations is that the integrand fails to decay to very high order at only one of the boundary hypersurfaces in the preimage of each hypersurface of rb$\times N_y$. The result is that there are no extended unions in the pushforward theorem and hence no logarithmic blow-up.}

We conclude that (\ref{newexpr}) has non-negative leading orders at each boundary hypersurface of $X$. It is therefore uniformly bounded, which proves the claim and completes the proof of the lemma.
\end{proof}

We can apply the lemma with $A_{\epsilon}=K_{\epsilon}$ or 
$A_{\epsilon}=E_{\epsilon}$, so we conclude that
all convolutions involving $G$ and either $K$ or $E$
are well-defined. This argument completes the proof of Proposition \ref{hkform}.

\subsection{Proof of Theorem \ref{structure}}

Now that we have shown Proposition \ref{hkform}, we consider the trace of the heat kernel,
which may be written as a sum of three terms as in (\ref{threeterms}). 
We now go term-by-term and prove that each is phg conormal on $Q_{0}$.
In all cases, we restrict so that $t$ is less than a fixed time $T$.
First we show:
\begin{lemma}\label{paramphgc}
 $Tr G_{\epsilon}(t)$ is phg conormal on $Q_{0}$. Its leading orders
on $Q_{0}$ are at worst $-n$ at L, $-n$ at F, and 0 at R. \end{lemma}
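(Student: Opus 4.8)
The plan is to compute $\operatorname{Tr} G_\epsilon(t)$ directly from the definition \eqref{parametrix} and reduce the analysis to two already-understood heat traces. Since $\operatorname{Tr} G_\epsilon(t) = \int_{\Omega_\epsilon} G_\epsilon(t,z,z)\,dz$, and $G_\epsilon(t,z,z) = \tilde\chi_1(z)\chi_1(z)\,H^{\epsilon Z}(t,z,z) + \tilde\chi_2(z)\chi_2(z)\,H^{\Omega_0}(t,z,z)$, I would first observe that on the support of $\chi_1$ we have $\tilde\chi_1\equiv 1$ (since $\tilde\chi_1 = 1$ on $r\le 9/8$ while $\chi_1$ is supported in $r\le 17/16$), and similarly $\tilde\chi_2\equiv 1$ on the support of $\chi_2$. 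Hence the diagonal restriction simplifies to $G_\epsilon(t,z,z) = \chi_1(z)H^{\epsilon Z}(t,z,z) + \chi_2(z)H^{\Omega_0}(t,z,z)$, so that
\[
\operatorname{Tr} G_\epsilon(t) = \int_{\epsilon Z}\chi_1(z)H^{\epsilon Z}(t,z,z)\,dz + \int_{\Omega_0}\chi_2(z)H^{\Omega_0}(t,z,z)\,dz.
\]
This is exactly the sum of the first two terms in \eqref{structureeq}, after rescaling the first integral to $Z$ (using $H^{\epsilon Z}(t,z,z)=\epsilon^{-n}H^Z(t/\epsilon^2, z/\epsilon, z/\epsilon)$ and a change of variables, which turns it into $\int_Z \chi_1(\epsilon z)H^Z(t/\epsilon^2,z,z)\,dz$). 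So it suffices to show each of the two pieces is phg conormal on $Q_0$ with the claimed leading orders.

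For the $\Omega_0$ piece, $\int_{\Omega_0}\chi_2(z)H^{\Omega_0}(t,z,z)\,dz$ is independent of $\epsilon$, so I only need its behavior as $t\to 0$. Since $\chi_2$ is supported away from the conic tip, this is the trace of the heat kernel restricted to a compact region of a smooth manifold, so by the standard interior heat expansion it is phg conormal in $\sqrt t$ with leading order $-n$ (equivalently, it is $\sum a_k t^{(k-n)/2} + O(t^\infty)$). As a function on $Q_0$ it pulls back to something with leading order $-n$ at both L and F (in terms of $\sqrt t$) and leading order $0$ at R, consistent with the claim. For the $\epsilon Z$ piece, I would invoke the results summarized in \eqref{smoothexpredux} and the surrounding discussion from \cite{s1}: the function $\int_Z\chi_1(\epsilon z)H^Z(\tau,z,z)\,dz$ is phg conormal in $(\tau,\epsilon)$ for small $\tau$ and on $X_b^2(\tau^{-1/2},\epsilon)$ for large $\tau$, with the explicit expansion \eqref{smoothexpredux} at $\epsilon\to 0$. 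Translating back through $\tau = t/\epsilon^2$ and multiplying by the $\epsilon^{-n}$ coming from the rescaling, one checks via the pullback theorem that $\int_{\epsilon Z}\chi_1(z)H^{\epsilon Z}(t,z,z)\,dz$ is phg conormal on $Q_0$; the $D_k\tau^{(k-n)/2}\epsilon^{k-n}$ terms in \eqref{smoothexpredux}, multiplied by $\epsilon^{-n}\cdot\epsilon^n$ (i.e. written in terms of $t$), produce the leading order $-n$ at L and F, while the renormalized-heat-trace term and the $\log\epsilon$ term produce the order-$0$ behavior at R. Summing the two phg conormal pieces gives a phg conormal function on $Q_0$ with leading orders bounded by $-n$ at L and F and $0$ at R.

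The main obstacle I anticipate is bookkeeping the blow-down/pullback between the coordinates $(\sqrt t,\epsilon)$ on $Q_0$ and the coordinate $\tau = t/\epsilon^2$ (with its own radial blow-up at $\tau^{-1/2}=\epsilon=0$), and verifying carefully that the relevant change of variables $(t,\epsilon)\mapsto(\tau,\epsilon)$ is a b-map between $Q_0$ (intersected with $t\le T$) and $X_b^2(\tau^{-1/2},\epsilon)$ so that Melrose's pullback theorem applies — this is the same kind of b-map verification carried out in the proof of Lemma \ref{errorlemma}, comparing boundary defining functions: $\sqrt t = \epsilon\sqrt\tau$, $\eta = \epsilon/\sqrt t = \tau^{-1/2}$, so L corresponds to $\sqrt\tau=0$, R to $\tau^{-1/2}=0$, and F to $\epsilon=0$, all of which pull back to products of boundary defining functions as required. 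Once that is in place, the leading-order assertions follow by reading off the exponents in \eqref{smoothexpredux} and the interior heat expansion, and adding. I would close by remarking that this lemma already identifies all the terms at L and F (they come entirely from the local heat coefficients $u_k$ on the cone and on $\Omega_0$), which is what Theorem \ref{structure} asserts for those faces; the order-$0$ term at R will be pinned down later using Theorem \ref{htconverg}.
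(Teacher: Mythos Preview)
Your outline is essentially the paper's approach: split $\operatorname{Tr}G_\epsilon(t)$ into the $\Omega_0$ piece (independent of $\epsilon$, handled by local heat asymptotics away from the cone tip) and the $\epsilon Z$ piece (rescaled to $\int_Z\chi_1(\epsilon z)H^Z(\tau,z,z)\,dz$ and analyzed via \cite{s1}). The $\Omega_0$ part and the polyhomogeneity claim for the $\epsilon Z$ part are fine.

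The gap is in your leading-order analysis for the $\epsilon Z$ piece. First, the expansion \eqref{smoothexpredux} is an expansion as $\epsilon\to 0$ for \emph{fixed} $\tau$, i.e.\ at the face F; it says nothing directly about L, which is the face $\tau\to 0$ with $\epsilon>0$. For L you need a separate (easy) argument from the short-time diagonal bound $H^Z(\tau,z,z)\le C\tau^{-n/2}$ together with $\operatorname{Vol}(\{|z|\le 2/\epsilon\})\sim\epsilon^{-n}$; the paper does exactly this. Second, and more seriously, your sentence ``the renormalized-heat-trace term and the $\log\epsilon$ term produce the order-$0$ behavior at R'' hides the only genuinely delicate point of the lemma. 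In $(\eta,\sqrt t)$ coordinates the expansion becomes \eqref{smoothexpnearr}, which contains an explicit $-L\log\eta$; one only gets order $0$ at R because ${}^R\operatorname{Tr}H^Z(\eta^{-2})$ has matching leading behavior $L\log\eta$ as $\eta\to 0$, so the two cancel. You need to invoke that fact (from \cite{s1}) explicitly. The paper actually takes a different route here: rather than citing the cancellation, it redoes the pushforward analysis of \eqref{umpteen} directly and observes that a naive application gives a spurious $\log$ at the $\epsilon=\eta=0$ face from the extended union $\kappa_2-n\,\bar\cup\,\kappa_1$ with $\kappa_2=n$, $\kappa_1=0$; it then uses the Alternative Hypothesis (the order-$0$ coefficient at zf decays to order $>n$ at $\mathrm{bf}_0$) to split $F=F_1+F_2$ and kill the extended union. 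Either argument works, but you must make one of them explicit---as written, your proof does not establish that the leading order at R is $0$ rather than $(0,1)$, and that distinction is exactly what Lemma~\ref{auxiliaryone} needs.
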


\begin{proof}
Tr $G_{\epsilon}(t)$ has two terms. The trace of the $\Omega_{0}$ term
is just \begin{equation}\label{omeganoughtpart}
\int_{\Omega_{0}}\chi_{2}(z)H^{\Omega_{0}}(t,z,z)\ dz,\end{equation} which is
independent of $\epsilon$. Since the support of $\chi_{2}(z)$ is away from the
conic tip, the integrand has a phg conormal
expansion as $t\rightarrow 0$ with
smooth coefficients in $z$ by Proposition \ref{conehkstructure}.
Therefore, (\ref{omeganoughtpart}) is phg conormal as $t\rightarrow 0$ and
independent of $\epsilon$, hence phg conormal on $Q$ and therefore on $Q_{0}$.
The leading orders of (\ref{omeganoughtpart}) on $Q$ are $-n$ at $\sqrt t=0$
and $0$ at $\epsilon=0$, so the leading orders on $Q_{0}$ are $-n$ at L and F,
and 0 at R, precisely as needed.

For the $\epsilon Z$ term, we rescale, then change variables. Writing
$\tau=t/\epsilon^{2}$, we get:
\[\int_{\epsilon Z}\chi_{1}(z)H^{\epsilon Z}(t,z,z)\ dz
=\epsilon^{-n}\int_{\epsilon Z}\chi_{1}(z)H^{Z}(t/\epsilon^{2},
z/\epsilon,z/\epsilon)\ dz\]
\begin{equation}\label{zpart}
=\int_{Z}\chi_{1}(z\epsilon)H^{Z}(\tau,z,z)\ dz.\end{equation}

We observe that this integral is precisely (\ref{rememberme}). Recall from section 3 and \cite{s1} that (\ref{zpart}) is phg in $(\tau,\epsilon)$ for $\tau$ bounded above and on $X_b^2(\epsilon,\tau^{-1/2})$ for $\tau$ bounded below, which translates into polyhomogeneity in $(\eta,\sqrt t)$ as in section 3. Therefore
(\ref{zpart}) is phg conormal on $Q_{0}$.
It remains only to analyze the leading orders. This may be done by using
the pushforward theorem and our knowledge of the leading orders of the heat kernel on $Z$ to track the leading orders through the proof of the polyhomogeneity of (\ref{zpart}) (the proof of Theorem 12 in \cite{s1}).
For $\tau$ less than any fixed time $T$, we do not need to use the pushforward
theorem; instead, Theorem \ref{maincor} implies
$H^{Z}(\tau,z,z)\leq C\tau^{-n}$, and hence that
\begin{equation}
\int_{Z}\chi_{1}(z\epsilon)H^{Z}(\tau,z,z)\ dz\leq C\tau^{-n}Vol(Z\cap\{|z|
\leq 2/\epsilon\})=\hat C\tau^{-n}\epsilon^{-n}.\end{equation}
Therefore the leading orders of (\ref{zpart}) at L and F are each at worst
$-n$.

Now consider $\tau$ greater than any fixed time $T$; we need to 
analyze the leading orders at R.
The portion of the integral (\ref{zpart}) with $|z|\leq 1$ is a function
only of $\tau$ (as the cutoff function is identically 1 in this region
for $\epsilon<1/2$), and hence has leading order 0 at R. We therefore
restrict attention to the $|z|\geq 1$ part of (\ref{zpart}), which,
as in the proof in \cite{s1}, may be written:
\begin{equation}\label{umpteen}
\int_{N}\int_{0}^{1}\chi_{1}(\epsilon/x)F(\eta,x,y,x,y)
x^{-n}\ \frac{dx}{x}\ dy.\end{equation}

Here we must use the pushforward theorem. First note that the map from $X_b^{3}(x,\eta,\epsilon)\times N$ to $X_b^2(x,\eta)\times N$ induced by projection off $\epsilon$ is a b-fibration \cite{ms}, as are the maps induced by projection off $x$ or $\eta$ respectively. The function $F(\eta,x,y,x,y)$ is phg conormal on $X_b^2(\eta,x)\times N$, with leading order 0 at $x=0$ (i.e. sc) and leading orders which we call $\kappa_1$ at $\eta=0$ (i.e. zf) and $\kappa_2$ at $\eta=x=0$ (i.e. bf$_0$). The rest of the integrand, $x^{-n}\chi_1(\epsilon/x)$, is phg conormal on $X_b^2(x,\epsilon)$, with leading orders $-n$ at $x=\epsilon=0$, $\infty$ at $x=0$ (because of the cutoff), and $0$ at $\epsilon=0$. Using Melrose's pullback theorem \cite{me3,gri}, the integrand in (\ref{umpteen}) is phg conormal on $X_{b}^{3}(x,\eta,\epsilon)\times N$, with leading orders (with respect to $\frac{dx}{x})$ of $\infty$ at the $x=0$ face and at the $x=\eta=0$ face, $\kappa_2-n$ at the 
$x=\eta=\epsilon=0$ face, 0 at the $\epsilon=0$ face, 
$-n$ at the $x=\epsilon=0$ face, and $\kappa_1$ at the $\eta=0$ and $\eta=\epsilon=0$
faces. Applying the pushforward theorem shows that (\ref{umpteen})
is phg conormal on $X_{b}^{2}(\eta,\epsilon)$, with leading orders of
$-n$ at the $\epsilon=0$ face, $\kappa_2-n\bar{\cup}\kappa_1$ at the $\epsilon=\eta=0$ face, and
$\kappa_1$ at the $\eta=0$ face; here the operation $\bar{\cup}$ is the extended union, which is discussed in \cite{me,me2,gri}.

We may simply apply this analysis with the known values of $\kappa_2=n$ and $\kappa_1=0$, but then we obtain a leading order of $\log\epsilon$ at $\epsilon=\eta=0$ from the extended union; as we will see, that face corresponds to R. To avoid this difficulty, we apply the remainder of the alternative hypothesis. Since the zero-order coefficient of $F$ at zf decays to order strictly greater than $n$ at bf$_0$, we may write $F=F_1+F_2$, where $F_1$ decays to order $0$ at zf and $n+\delta_1$ for some $\delta_1>0$ at bf$_0$, and $F_2$ decays to order $\delta_2>0$ at zf and $n$ at bf$_0$. Applying the above analysis to $F_1$ and $F_2$ separately, the extended union problem vanishes, and we are left with a leading order of 0 at $\epsilon=\eta=0$. As we claimed, this face corresponds to R; since $\epsilon/\eta=\sqrt t$ is bounded, (\ref{umpteen}) has a phg conormal expansion in $(\eta,\epsilon/\eta)=(\eta,\sqrt t)$, with leading orders $0$ at $\eta=0$ and $-n$ at $\sqrt t=0$. Therefore the leading order of (\ref{zpart}) at R is 0.
This completes the proof of Lemma \ref{paramphgc}.
\end{proof}

\begin{remark} If we also use the pushforward theorem to analyze (\ref{umpteen}) for small $\tau$, we obtain a useful consequence which was needed in Section \ref{epsilonzee}. For small $\tau$, we know from \cite{s1} that $H^Z(\tau,x,y,x,y)$ is phg conormal on $[0,1]_{\sqrt\tau}\times[0,1]_x$ with index set precisely $-n+2\mathbb N_0$ at $\sqrt\tau=0$ (corresponding to the usual interior heat asymptotics). The integral (\ref{umpteen}) becomes
\[\int_Z\chi_1(\epsilon/x) H^Z(\tau,x,y,x,y)\ x^{-n-1}\ dx\ dy.\]
Using the pullback theorem as above, the integrand is phg conormal on $X_b^2(x,\epsilon)\times[0,1]_{\sqrt\tau}\times N$, and moreover the index set at the $\sqrt\tau=0$ face is precisely $-n+2\mathbb N_0$. The map from $X_b^2(x,\epsilon)\times[0,1]_{\sqrt\tau}\times N$ to $[0,1]_{\epsilon}\times[0,1]_{\sqrt\tau}$ given by projection off $x$ and $y$ is a b-fibration, and so the integral (\ref{umpteen}) is phg conormal in $(\epsilon,\sqrt\tau)$. The only face which the b-fibration maps into $\sqrt\tau=0$ is the original $\sqrt\tau=0$ face, and hence the index set of (\ref{umpteen}) at $\sqrt\tau=0$ is $-n+2\mathbb N_0$. Since the renormalized heat trace is the finite part of (\ref{umpteen}) at $\epsilon=0$, we conclude that, as claimed at the end of Section \ref{epsilonzee},
\begin{proposition}\label{indexsetatzero} The index set of $^RTr H^Z(\tau)$ at $\sqrt\tau=0$ is $-n+2\mathbb N_0$. \end{proposition}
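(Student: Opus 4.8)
The plan is to establish Proposition \ref{indexsetatzero} by tracking index sets through the pushforward theorem applied to the integral (\ref{umpteen}) in the small-$\tau$ regime, precisely as sketched in the remark preceding the proposition. First I would invoke the small-time structure of the heat kernel on $Z$ from \cite{s1}: restricted to the spatial diagonal, $H^Z(\tau,x,y,x,y)$ is phg conormal on $[0,1]_{\sqrt\tau}\times[0,1]_x\times N$ with index set exactly $-n+2\mathbb N_0$ at the face $\sqrt\tau=0$ (these are the usual interior heat asymptotics, which hold because the diagonal lies in the interior of the cross-section and the short-time heat expansion is local). The key point is not merely the leading order but that \emph{all} exponents appearing at $\sqrt\tau=0$ lie in $-n+2\mathbb N_0$, with no logarithmic terms, and no fractional powers.

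Next I would apply Melrose's pullback theorem to the full integrand $\chi_1(\epsilon/x)H^Z(\tau,x,y,x,y)x^{-n-1}$ on the space $X_b^2(x,\epsilon)\times[0,1]_{\sqrt\tau}\times N$, exactly as in the proof of Lemma \ref{paramphgc}, except now keeping $\tau$ small rather than large. Since the cutoff factor $\chi_1(\epsilon/x)x^{-n-1}$ is independent of $\tau$ and $y$, the pullback preserves the index set at the $\sqrt\tau=0$ face: it remains $-n+2\mathbb N_0$. Then I would apply Melrose's pushforward theorem to the b-fibration $X_b^2(x,\epsilon)\times[0,1]_{\sqrt\tau}\times N\to[0,1]_\epsilon\times[0,1]_{\sqrt\tau}$ obtained by projecting off $x$ and $y$; integration in $x$ and $y$ is exactly pushforward along this map. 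The crucial observation is that the \emph{only} boundary hypersurface of the source mapping into the face $\{\sqrt\tau=0\}$ of the target is the original $\{\sqrt\tau=0\}$ face itself — the faces $\{x=0\}$, $\{\epsilon=0\}$, and $\{x=\epsilon=0\}$ all map into $\{\epsilon=0\}$, not into $\{\sqrt\tau=0\}$. Therefore, by the pushforward theorem, the index set of (\ref{umpteen}) at $\sqrt\tau=0$ is just the index set pushed forward from that single face, namely $-n+2\mathbb N_0$, with no extended unions and hence no logarithmic enhancement at $\sqrt\tau=0$.

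Finally, since $^RTr H^Z(\tau)$ is by definition the coefficient of $\epsilon^0$ (the finite part) in the expansion of (\ref{umpteen}) at $\epsilon=0$, and taking such a coefficient of a phg conormal function of $(\epsilon,\sqrt\tau)$ yields a phg conormal function of $\sqrt\tau$ whose index set at $\sqrt\tau=0$ is contained in the index set of the two-variable function at that face, we conclude that $^RTr H^Z(\tau)$ has index set $-n+2\mathbb N_0$ at $\sqrt\tau=0$, proving Proposition \ref{indexsetatzero}.

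The main obstacle is verifying carefully that no logarithmic terms are introduced at $\sqrt\tau=0$ during the pushforward; this requires checking that the preimage of $\{\sqrt\tau=0\}$ under the b-fibration consists of a single boundary hypersurface, so that the ``extended union'' in Melrose's pushforward theorem reduces to an ordinary index set. This is geometrically clear from the structure of $X_b^2(x,\epsilon)\times[0,1]_{\sqrt\tau}$ — the $\sqrt\tau=0$ face is a clean product factor and does not interact with the blow-up in $(x,\epsilon)$ — but it is the point that must be made precise, and it is what distinguishes this computation from the large-$\tau$ analysis in Lemma \ref{paramphgc}, where the extended union genuinely contributed a logarithm at R.
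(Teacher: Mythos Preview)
Your proposal is correct and follows essentially the same approach as the paper's own proof: both invoke the short-time diagonal heat asymptotics from \cite{s1}, pull back to $X_b^2(x,\epsilon)\times[0,1]_{\sqrt\tau}\times N$, push forward by the b-fibration projecting off $x$ and $y$, observe that only the original $\sqrt\tau=0$ face maps into $\sqrt\tau=0$ (so no extended union and hence no logarithms), and then take the finite part at $\epsilon=0$. Your identification of the single-preimage-face condition as the key point exactly matches the paper's reasoning.
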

\end{remark}

Returning to the matter at hand, we now consider $(E_{\epsilon}*G_{\epsilon})(t,z,z')$; note that it 
is zero for $z$ outside the cutoff region - that is, the union
of the bands where $V_{1}$ and $V_{2}$ are supported. We will prove
the following:
\begin{lemma}\label{technicallemma} [Technical Lemma]
For $z$ and $z'$ in the support of $V_{1}$ and $V_{2}$,
$(E_{\epsilon}*G_{\epsilon})(t,z,z')$
is phg conormal on $Q_{0}$, with smooth dependence on $z$ and $z'$.
\end{lemma}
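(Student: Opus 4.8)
The plan is to realize $(E_\epsilon*G_\epsilon)(t,z,z')$, for each fixed $z,z'$ in the cutoff region, as a pushforward of a polyhomogeneous conormal density along a b-fibration onto $Q_0$, and then invoke Melrose's pushforward theorem; this parallels the treatment of the $\epsilon Z$ term of $Tr\, G_{\epsilon}$ in the proof of Lemma \ref{paramphgc}, but is more elaborate because of the extra convolution variable. First I would decompose along the cutoffs: writing $E_\epsilon=E_\epsilon^Z+E_\epsilon^0$ with $E_\epsilon^Z=V_1(z)H^{\epsilon Z}(t,z,z')\chi_1(z')$ and $E_\epsilon^0=V_2(z)H^{\Omega_0}(t,z,z')\chi_2(z')$, and $G_\epsilon=G_\epsilon^Z+G_\epsilon^0$ analogously, the convolution $E_\epsilon*G_\epsilon$ becomes a sum of four terms. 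The key structural feature, already exploited in Lemmas \ref{errorlemma} and \ref{decay}, is that in each of these convolutions the first factor is a heat kernel evaluated off the spatial diagonal in the intermediate variable $z''$: $V_1$ and $V_2$ are supported in the bands $r\in(9/8,5/4)$ and $r\in(3/4,7/8)$, while the cutoffs on $z''$ coming from $E_\epsilon$ and $G_\epsilon$ keep $z''$ strictly away from those bands. Hence, by Proposition \ref{conehkstructure} and Theorems \ref{zshorttime} and \ref{maincor}, this first factor is polyhomogeneous conormal with infinite-order vanishing as its time argument tends to zero, uniformly in the spatial and $\epsilon$ variables.

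The $\Omega_0$--$\Omega_0$ term $E_\epsilon^0*G_\epsilon^0$ is independent of $\epsilon$ and supported away from the cone point in all spatial variables, so it is an ordinary Duhamel convolution of heat kernels on the smooth part of $\Omega_0$; it is smooth in $(t,z,z')$ and, being constant in $\epsilon$, trivially polyhomogeneous on $Q$, hence on $Q_0$. For the other three terms I would rescale each $\epsilon Z$ heat kernel by $H^{\epsilon Z}(t,z,z')=\epsilon^{-n}H^Z(t/\epsilon^2,z/\epsilon,z'/\epsilon)$, introducing $\tau=t/\epsilon^2$ and, when both factors are $\epsilon Z$, $\sigma=s/\epsilon^2$, and change variables $z''\mapsto z''/\epsilon$ in the intermediate integral; the operators $V_1,V_2$ then lift to b-differential operators on $Z$ exactly as in the proof of Lemma \ref{errorlemma}. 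After rescaling, the integrand of each convolution is, by Melrose's pullback theorem, polyhomogeneous conormal on a fiber product of copies of $Z^2_{heat}$ (one per heat-kernel factor, Figure \ref{hkac}) with a parameter space carrying the blow-ups of $\{\sqrt t=\epsilon=0\}$, of $\{\sqrt s=\epsilon=0\}$, and of the corners $\{s=0\}$ and $\{s=t\}$ of the time simplex $\{0\le s\le t\}$. I would then assemble the total blown-up space $\mathcal B$ on which this integrand is polyhomogeneous and check that the projection which integrates out $(z'',s)$ is a b-fibration $\mathcal B\to Q_0$.

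Melrose's pushforward theorem then yields that each term is polyhomogeneous conormal on $Q_0$, and tracking leading orders through the fibration gives the stated structure. Here the off-diagonal support of the outer heat kernel is what makes the leading-order bookkeeping clean: it forces infinite-order decay at the $\{s=t\}$ corner and at every diagonal face in the preimage, which both prevents extended unions (so no spurious logarithms arise at R) and, for the $G_\epsilon$ factor, lets the $+n$ Jacobian order shift of the spatial integration absorb its $s^{-n}$-type blow-up at its own time-zero face, exactly the mechanism used for the face $H_4$ in the proof of Lemma \ref{decay}. Consistency with Lemma \ref{decay} is immediate: the result vanishes to infinite order at $\sqrt t=0$, i.e.\ at L and F, so it is in fact polyhomogeneous on $Q$, which is the form in which the remainder $R$ enters Theorem \ref{structure}. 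Finally, smooth dependence on $z,z'$ holds because these variables enter only through the fixed smooth cutoffs and the b-differential operators $V_1,V_2$ applied in $z$, and the heat kernels are conormal (in particular smooth away from their diagonals) in their spatial arguments, so $z,z'$ may be carried as smooth parameters throughout. The main obstacle is precisely the construction and verification of $\mathcal B$ in the second paragraph: one must arrange the iterated blow-ups so that both the spatial integration over $Z$ near infinity and the time convolution become b-fibrations over $Q_0$ at once, which requires careful control of the interaction between $\sigma$, $\tau-\sigma$, and the front face $\{\sqrt t=\epsilon=0\}$, together with a verification that the combined leading orders never produce an extended union at R.
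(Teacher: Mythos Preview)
Your overall strategy is correct and matches the paper's: decompose $E_\epsilon*G_\epsilon$ into the four cutoff combinations, dispose of the $\Omega_0$--$\Omega_0$ term as $\epsilon$-independent, and for the remaining three realize the convolution as a pushforward of a polyhomogeneous density by a b-fibration onto $Q_0$. Your observation that the first factor in each convolution is always off the spatial diagonal in $z''$ (so has infinite-order decay as its time argument $\to 0$) is the same structural point the paper exploits.

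Where you diverge from the paper is in proposing, for each of the three nontrivial terms, a \emph{single} blown-up total space $\mathcal B$ carrying both heat-kernel factors and the full time simplex, and a single b-fibration $\mathcal B\to Q_0$. You correctly identify this as the main obstacle, and you do not carry it out. The paper does not attempt this global construction. Instead it decomposes each term further: first into $\tau\le C$ versus $\tau\ge C$ (so that one knows whether to use the short-time model of Theorem~\ref{zshorttime} or the long-time model of Theorem~\ref{maincor} for the $Z$-factors), and then, in the large-$\tau$ regime of the $Z$--$Z$ term, into four time-regions (roughly $s\in[0,\epsilon^2]$, $[\epsilon^2,t/2]$, $[t/2,t-\epsilon^2]$, $[t-\epsilon^2,t]$) according to whether each of $s/\epsilon^2$ and $(t-s)/\epsilon^2$ is bounded or large. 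In each sub-region the relevant spaces are simple products of $X_b^k$ factors with a single scattering-type blow-up, and the b-fibration checks are routine. The paper also repeatedly uses placeholder variables and restriction to p-submanifolds to untangle the coupling between the convolution parameter and $(\eta,\sqrt t)$.

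So your proposal is not wrong, but it defers precisely the work that constitutes the proof. The paper's case-by-case decomposition is what replaces your hypothetical $\mathcal B$: rather than building one space resolving the short/long-time transition for both factors simultaneously (and its interaction with the $Q_0$ blow-up), one localizes so that on each piece only one model is needed per factor, and then the pushforward theorem applies to a space one can write down explicitly. If you want to complete your argument along the paper's lines, the missing ingredient is this time-regime decomposition; if you want to pursue the single-$\mathcal B$ route, you would need to construct a space fibering over $Q_0$ whose fibers contain (blow-ups of) $Z^2_{heat}\times Z^2_{heat}$ glued along the shared $z''$-variable and the time simplex, and verify the b-fibration axioms at every one of its many corners, which is substantially harder than what the paper does.
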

The proof of this lemma is quite long, and so it is deferred until the next section; we assume it for the moment. Once we have proven the lemma, though, it is easy to refine it further:
\begin{corollary}\label{techcor}
For $z$ and $z'$ in the support of $V_{1}$ and $V_{2}$,
$(E_{\epsilon}*G_{\epsilon})(t,z,z')$ is actually phg conormal on $Q$, with infinite-order decay at $t=0$ and leading order at worst 0 at $\epsilon=0$.
\end{corollary}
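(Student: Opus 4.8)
The plan is to combine the Technical Lemma (Lemma~\ref{technicallemma}), which already delivers polyhomogeneous conormality on the blown-up space $Q_0$, with the uniform short-time bound of Lemma~\ref{decay}, which supplies the infinite-order vanishing at $t=0$ needed to blow down from $Q_0$ to $Q$.

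First I would check that Lemma~\ref{decay} applies with $A_\epsilon=E_\epsilon$. By $(\ref{writtenerror})$ the kernel $E_\epsilon(t,z,z')$ is supported in $z$ inside the union of the bands carrying $V_1$ and $V_2$, i.e. in $(3/4,7/8)\cup(9/8,5/4)\subset\{3/4<r<5/4\}$, which is exactly the $z$-support hypothesis of Lemma~\ref{decay}; and the estimate $|(W_\epsilon(z'))^aE_\epsilon(t,z,z')|\le C_Mt^M$ for every $M$, uniformly in $\epsilon<1/2$, is precisely Lemma~\ref{errorlemma} (the text already notes that Lemma~\ref{decay} may be applied to $A_\epsilon=E_\epsilon$). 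Lemma~\ref{decay} then gives, for every $M$, a constant $\tilde C_M$ with $|(E_\epsilon*G_\epsilon)(t,z,z')|\le\tilde C_Mt^M$ for all $\epsilon<1/2$, all $t<T$, and all $z'$ with $3/4<r<5/4$ --- in particular for $z$ and $z'$ in the support of $V_1\cup V_2$.

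Next, this bound says $(E_\epsilon*G_\epsilon)$ vanishes to infinite order at $t=0$. Under the blow-down map $\beta\colon Q_0\to Q$ the locus $\{t=0\}\subset Q$ lifts to $L\cup F$, where $F$ is the front face created by the radial blow-up $Q_0=[Q;\{(0,0)\}]$; hence $(E_\epsilon*G_\epsilon)$ vanishes to infinite order at $F$. By the Technical Lemma it is polyhomogeneous conormal on $Q_0$, and a polyhomogeneous conormal function on a blow-up $[M;P]$ that vanishes to infinite order at the blown-up face descends to a polyhomogeneous conormal function on $M$ (the blow-down fact from \cite{me2}, cf. Proposition A.3 of \cite{s}, already used for the corollary following Lemma~\ref{errorlemma}). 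Applying this with $M=Q$ and $P=\{(0,0)\}$ shows $(E_\epsilon*G_\epsilon)(t,z,z')$ is polyhomogeneous conormal on $Q$, still with smooth dependence on $z$ and $z'$ and with infinite-order decay at $t=0$. Finally, for the leading order at $\epsilon=0$: the bound from Lemma~\ref{decay} is uniform in $\epsilon$, so for each fixed $t>0$ the function stays bounded as $\epsilon\to0$; a function polyhomogeneous at $\epsilon=0$ whose restriction to each fixed positive $t$ is bounded as $\epsilon\to0$ can have neither a negative leading exponent nor a positive power of $\log\epsilon$ at exponent $0$, so its leading order at $\epsilon=0$ is at worst $0$, as claimed.

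As for the main obstacle, there is little of substance --- all the analytic work is packaged into Lemma~\ref{technicallemma} (deferred to the next section) and Lemmas~\ref{errorlemma}--\ref{decay}. The only points requiring care are the bookkeeping of the support conditions so that Lemma~\ref{decay} genuinely applies (matching $\operatorname{supp}(V_1\cup V_2)$ against $\{3/4<r<5/4\}$ and against the $z'$-range in the statement of Lemma~\ref{decay}), and invoking the blow-down fact with the front face $F$ in the role of the blown-up face.
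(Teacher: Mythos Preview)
Your proposal is correct and follows essentially the same route as the paper's own proof: apply Lemma~\ref{decay} with $A_\epsilon=E_\epsilon$ to obtain the uniform $t^M$ bound, read off infinite-order decay at $L$ and $F$ and order at worst $0$ at $R$, and then invoke the blow-down fact (Proposition~A.3 of \cite{s}) to descend from $Q_0$ to $Q$. Your write-up is in fact slightly more explicit than the paper's, spelling out the support-condition bookkeeping and the reason boundedness at $R$ excludes logarithmic terms at exponent~$0$.
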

\begin{proof} We obtain this result as an immediate consequence of Lemma \ref{technicallemma} and Lemma \ref{decay}. The bounds in Lemma \ref{decay} imply that $(E_\epsilon*G_\epsilon)(t,z,z')$ has infinite-order decay at L and F and leading order at worst 0 at R. In turn, since $(E_\epsilon*G_\epsilon)(t,z,z')$ has infinite-order decay at the blown-up face in $Q_0$, a standard argument implies that it is phg conormal on the blown-down space $Q$; see Proposition A.3 in \cite{s}.
\end{proof}

Now consider $(K_{\epsilon}*(E_{\epsilon}*G_{\epsilon}))(t,z,z')$; it is well-defined by associativity and Lemma \ref{decay}. By Corollary \ref{techcor} and Lemma \ref{kerrorlemma}, both $K_\epsilon$ and $E_\epsilon*G_\epsilon$ are phg conormal on $Q$, with infinite-order decay at $t=0$ and leading order 0 at $\epsilon=0$, for $z$ and $z'$ in the cutoff region. We may therefore apply the same argument as in the proof of Lemma \ref{kerrorlemma} to conclude that $K_\epsilon*E_\epsilon*G_\epsilon$ has the same properties. Finally, take the trace of both $E_\epsilon*G_\epsilon$ and $K_\epsilon*(E_\epsilon*G_\epsilon)$; restricting to the diagonal and integrating over the compact cutoff region, we have

\begin{corollary}$Tr (E_{\epsilon}*G_{\epsilon})(t)$ and $Tr (K_{\epsilon}*(E_{\epsilon}*G_{\epsilon}))(t)$ are phg conormal on $Q$ with infinite-order decay at $\sqrt t=0$
and leading order 0 at $\epsilon=0$.
\end{corollary}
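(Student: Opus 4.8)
The plan is to derive the corollary from the kernel-level statements already in hand---Corollary \ref{techcor} for $E_\epsilon*G_\epsilon$ and the analogous polyhomogeneity statement for $K_\epsilon*(E_\epsilon*G_\epsilon)$ proved just above---by checking that passing to the trace (restriction to the spatial diagonal followed by integration in $z$) preserves polyhomogeneity on $Q$ together with the relevant leading orders. First I would note that both $(E_\epsilon*G_\epsilon)(t,z,z')$ and $(K_\epsilon*(E_\epsilon*G_\epsilon))(t,z,z')$ vanish identically unless $z$ lies in the cutoff region $\mathcal C$ (the union of the two bands on which $V_1$ and $V_2$ are supported): in each case the $z$-dependence enters only through the leftmost factor ($E_\epsilon$, respectively $K_\epsilon$), and both of those have $z$-support in $\mathcal C$. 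Hence the trace $\int_{\Omega_\epsilon}(\,\cdot\,)(t,z,z)\,dz$, which is finite since the kernels are, for $t>0$, smooth and compactly supported in $z$, reduces to the integral over the fixed compact set $\mathcal C$. Moreover $\Omega_\epsilon$, restricted to $\mathcal C$, is for every $\epsilon<1/2$ isometric to a fixed (exactly conic) subset of $\Omega_0$, so the volume measure used in the trace is independent of $\epsilon$.

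Next, restrict to the diagonal. By Corollary \ref{techcor} and its $K_\epsilon$-analogue, each kernel is phg conormal on $Q$ in $(t,\epsilon)$, with infinite-order decay at $\sqrt t=0$ and leading order at worst $0$ at $\epsilon=0$, and---by the ``smooth dependence on $z$ and $z'$'' clause of Lemma \ref{technicallemma}---with coefficients in the $(t,\epsilon)$-expansion that are smooth in $(z,z')\in\mathcal C\times\mathcal C$ and with remainder bounds uniform on compact sets. Setting $z'=z$ therefore yields, in each case, a family depending phg-conormally on $(t,\epsilon)\in Q$, with coefficients smooth in $z\in\mathcal C$, with uniform remainder estimates, and with the same orders. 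Finally, integrating in $z$ over $\mathcal C$ against its $\epsilon$-independent volume form commutes with the asymptotic expansion---one integrates each coefficient, and the uniform remainder bounds transfer to the integral---so $\int_{\mathcal C}(\,\cdot\,)(t,z,z)\,dz$ is again phg conormal on $Q$, with leading order at each face no worse than that of the integrand. This gives that $Tr(E_\epsilon*G_\epsilon)(t)$ and $Tr(K_\epsilon*(E_\epsilon*G_\epsilon))(t)$ are phg conormal on $Q$ with infinite-order decay at $\sqrt t=0$ and leading order $0$ at $\epsilon=0$.

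The argument is short because the substantive work has already been done in Lemma \ref{technicallemma}, Corollary \ref{techcor}, and Lemma \ref{kerrorlemma}; the only point requiring attention---and the place where one must be careful---is the uniformity of the phg expansions in the spatial variable, which is precisely what ``smooth dependence on $z$ and $z'$'' encodes and which is what makes the term-by-term integration over $\mathcal C$ legitimate.
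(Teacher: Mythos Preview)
Your proposal is correct and follows essentially the same approach as the paper: the paper's argument is precisely to restrict to the diagonal and integrate over the compact cutoff region, using Corollary \ref{techcor}, the $K_\epsilon$-analogue obtained via the Lemma \ref{kerrorlemma} argument, and the smooth dependence on $(z,z')$. You have merely spelled out more carefully why restriction and integration over the fixed compact set $\mathcal C$ preserve phg conormality on $Q$ and the leading orders, which the paper leaves implicit.
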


We have now shown that each of the three terms in (\ref{threeterms}) is phg
conormal on $Q_{0}$. Therefore, their sum $Tr H^{\Omega_{\epsilon}}(t)$ is phg
conormal on $Q_{0}$. Moreover, $Tr H^{\Omega_\epsilon}(t)$ is $Tr G^{\Omega_\epsilon}(t)$ plus a term which is phg conormal on $Q$ with infinite-order decay at $t=0$. This completes the proof of Theorem \ref{structure}, under the weaker assumption of the `alternate hypothesis.'

\appendix

\section{Proof of the Technical Lemma}

In this section, we complete the proof of Theorem
\ref{structure} by proving Lemma \ref{technicallemma}.
We decompose the convolution $(E_{\epsilon}*G_{\epsilon})(t,z,z')$ 
into four terms:
\[\int_{0}^{t}\int_{\epsilon Z}V_{1}(z)H^{\epsilon Z}(t-s,z,z'')
\chi_{1}(z'')H^{\epsilon Z}(s,z'',z')\chi_{1}(z')\ dz''\ ds\]
\[+\int_{0}^{t}\int_{\Omega_{0}}V_{2}(z)H^{\Omega_{0}}(t-s,z,z'')
\chi_{2}(z'')H^{\Omega_{0}}(s,z'',z')\chi_{2}(z')\ dz''\ ds\]
\[+\int_{0}^{t}\int_{\Omega_{0}}V_{1}(z)H^{\epsilon Z}(t-s,z,z'')
(\chi_{1}\tilde\chi_{2})(z'')H^{\Omega_{0}}(s,z'',z')\chi_{2}(z')\ dz''\ ds\]
\[+\int_{0}^{t}\int_{\Omega_{0}}V_{2}(z)H^{\Omega_{0}}(t-s,z,z'')
(\chi_{2}\tilde\chi_{1})(z'')H^{\epsilon Z}(s,z'',z')\chi_{1}(z')\ dz''\ ds.\]
The spatial integrals are all over $\Omega_{\epsilon}$, but the first
integrand is zero when $|z''|>3/2$ and hence the integral may be written
as an integral over $\epsilon Z$; the remaining integrands are zero when 
$|z''|<1/2$ and hence their integrals may be written as integrals over $\Omega_{0}$.
We will examine each integral in turn and prove that it is phg conormal on
$Q_{0}$, uniformly for $z$ and $z'$ in the cutoff region. These proofs
depend heavily on geometric microlocal analysis and Melrose's pushforward theorem. Some notes on the analysis to follow:

- As before, we may treat $V_{1}(z)$ and $V_{2}(z)$ as cutoff functions,
rather than differential operators; they lift to b-derivatives and thus
have no effect on the polyhomogenous expansions. We suppress these
cutoff functions in the integrals, as they do not interact with the variables
of integration.

- The support conditions imply that $d_{\Omega_{\epsilon}}(z,z'')>1/16$
whenever the integrand is nonzero, so
$z$ and $z''$ are uniformly separated. On the other hand, $z'$ may be close
to either $z$ or $z''$.

- Since we are only interested in polyhomogeneity, we suppress all
polynomial factors of the variables of integration; they may affect the leading
orders, but not polyhomogeneity. All of the overall integrals, as well as
all of the interior spatial integrals, are well-defined; as long as we
avoid integrating the kernel on the diagonal in time down to $t=0$, we
will not have to worry about the integrability condition in the
pushforward theorem.

- Let $\rho=s/\epsilon^{2}$, $\sigma=\sqrt s/(\eta\sqrt t)$, and 
$\sigma'=(\sigma)^{-1}=(\eta\sqrt t)/\sqrt s$. It is often convenient
to integrate in one of these variables instead of in $s$; the change of variables
only gives polynomial factors, which we ignore.

- Many of the integrals will have to be broken into further pieces;
among other decompositions,
we often have to distinguish between the situations when $\tau$ is
bounded above and when it is bounded below.

\subsection{The first integral}
For the first integral, we rescale the heat kernels,
then substitute $\tilde z=z''/\epsilon$ so that we can
integrate over $Z$ instead of $\epsilon Z$. We get
\begin{equation}
\epsilon^{-n}\int_{0}^{t}\int_{Z}H^{Z}(\frac{t-s}
{\epsilon^{2}},z/\epsilon,\tilde z)
\chi_{1}(\epsilon\tilde z)H^{Z}(\frac{s}{\epsilon^{2}},\tilde z,z'/\epsilon)
\ d\tilde z\ ds.\end{equation}
Using $x$ and $y$ instead gives, where $\tilde x=\epsilon x''$:
\begin{equation}\label{firstint}
\epsilon^{-n}\int_{0}^{t}\int_{Z}H^{Z}(\frac{t-s}{\epsilon^{2}},
\epsilon x,y,\tilde x,y'')\chi_{1}(\epsilon/\tilde x)
H^{Z}(\frac{s}{\epsilon^{2}},\tilde x,y'',\epsilon x',y')
\tilde x^{-n-1} d\tilde x\ dy''\ ds.\end{equation}

\subsubsection{Small-$\tau$ case:}

First consider (\ref{firstint}) where $\tau<C$ for some $C$; 
in this case, since $s\leq t$, we use $\rho=s/\epsilon^{2}$, and $\rho<C$ also.
Both heat kernels are thus in the short-time case.
Rescaling the integral in $s$ to an integral in
$\rho$ and suppressing polynomial factors, we obtain
\begin{equation}\label{firstintone}
\int_{0}^{C}
\chi(\{\rho\leq\tau\})
\int_{Z}H^{Z}(\tau-\rho,\epsilon x,y,\tilde x,y'')
\chi_{1}(\frac{\epsilon}{\tilde x})
H^{Z}(\rho,\tilde x,y'',\epsilon x',y')\ d\tilde x\ dy''\ d\rho.\end{equation}


The first heat kernel is at short time and away from the diagonal,
so by Theorem \ref{zshorttime},
it is phg conormal on $X_{b}^{2}(\epsilon x,\tilde x)\times [0,C]_{(\tau-\rho)}
\times N_{y}\times N_{y''}$; $\chi_{1}(\epsilon/\tilde x)$ is also phg conormal
on $X_{b}^{2}(\epsilon x,\tilde x)$, so its pullback is phg conormal on the
same space.
Since $x$ and $x'$ vary over subsets of $[1/2,2]$,
the first heat kernel and the cutoff are also phg conormal
on $X_{b}^{2}(\epsilon x',\tilde x)\times
[0,C]_{(\tau-\rho)}\times N_{y}\times N_{y''}$ with parametric dependence on $x$.
The second heat kernel is, again by Theorem
\ref{zshorttime},
phg conormal on $[Z_{sc}^{2}(\tilde x,y'',\epsilon x',y')\times [0,C]_{\rho};
\{\sqrt\rho=0,\tilde x=\epsilon x',y''=y'\}]$. 
Therefore, the pullback theorem implies that the integrand in (\ref{firstintone})
is phg conormal on
\begin{equation}\label{spaceone}
[Z_{sc}^{2}(\tilde x,y'',\epsilon x',y')\times [0,\sqrt C]_{\sqrt\rho};
\{\sqrt\rho=0,\tilde x=\epsilon x',y''=y'\}]
\times [0,C]_{(\tau-\rho)},\end{equation}
with parametric dependence on $(x,y)$.
Integration in $\tilde x$ and $y''$ 
is a $b$-fibration from (\ref{spaceone})
onto $[0,1]_{\epsilon x'} \times N_{y'}\times [0,1]_{\sqrt\rho}
\times [0,1]_{\tau-\rho}$ (see propositions A.11 and A.12 in \cite{s}, originally from \cite{ms} and \cite{me2}). Therefore, we may apply the pushforward theorem;
the spatial integral in (\ref{firstintone})
is phg conormal on $[0,1]_{\epsilon x'} \times N_{y'}\times 
[0,C]_{\sqrt\rho}\times [0,C]_{(\tau-\rho)}$ with $(x,y)$ as parameters, 
and hence phg conormal 
on $[0,1]_{\epsilon}\times[0,C]_{\rho}\times [0,C]_{(\tau-\rho)}$
with $(x,y,x',y')$ as parameters. 

All dependence on $(x,y,x',y')$ is smooth, so we now suppress it.
It remains to do the $\rho$ integral. Notice that the map from
$[0,C]_{\tau}\times [0,1]_{\rho/\tau}$ to $[0,C]_{\rho}\times [0,C]_{\tau-\rho}$
given by $(x_{1},x_{2})\rightarrow (x_{1}x_{2},x_{1}(1-x_{2}))$ is a b-map.
Therefore, the spatial integral above is phg conormal in $[0,1]_{\epsilon}
\times[0,C]_{\tau}\times[0,1]_{\rho/\tau}$.
Change variables to an integral in $\rho/\tau$ and integrate from 0 to 1;
this integration is a b-fibration onto $[0,1]_{\epsilon}\times [0,C]_{\tau}$,
so 
(\ref{firstintone}) is phg conormal
in $(\epsilon,\tau)$ and hence on $Q_{0}$.


\subsubsection{Large-$\tau$ case, introduction:}

Assume that $\tau>C$ for some large $C$, and use $(\eta,t)$ in place
of $(\tau,\epsilon)$.
We break up the $s$ integral into four pieces: 
I, from 0 to $\epsilon^{2}=\eta^{2}t$; 
II, from $\eta^{2}t$ to $t/2$, IV, from $t/2$ to $t-\epsilon^{2}$, and
III, from $t-\epsilon^{2}$ to $t$. Each is 
a different regime.

In regions I and II, the time argument of the first heat kernel
is greater than $\tau/2>C/2$, so we write 
it as $F(\frac{\eta\sqrt t}{\sqrt{t-s}},\eta\sqrt t x,y,\tilde x,y'')$,
which equals
$F(\eta\sqrt{\frac{1}{1-s/t}},\eta\sqrt t x,y,\tilde x,y'')$.
We introduce a placeholder variable $a=\eta\sqrt t$.
The total integral in regions I and II is, again suppressing polynomials:
\[\int_{0}^{t/2}\int_{Z}
F(\eta\sqrt{\frac{1}{1-(s/t)}},a x,y,\tilde x,y'')
\chi_{1}(a/\tilde x)H^{Z}(\frac{s}{\eta^{2}t},\tilde x,y'',
ax',y')\ d\tilde x\ dy''\ ds.\]

The support conditions guarantee that the arguments of $F$ are away from sc.
Therefore, $F$ is phg conormal
on $X_{b}^{3}(\eta\sqrt{\frac{t}{t-s}},a,\tilde x)$ with
smooth parametric dependence on $(x,y,y'')$. However, $\sqrt{t/(t-s)}$ is a
smooth function of $s/t$, and is bounded away from zero since $s\leq t/2$ in regions I and II.
So: in regions I and II,
$F$ is actually phg conormal on $X_{b}^{3}(\eta,a,\tilde x)$,
with smooth parametric dependence on $(x,y,y'')$ and also on $s/t$, which
is between 0 and 1/2.

\subsubsection{Region I:}
In region I, we have
\begin{equation}\label{intregionone}\int_{0}^{\eta^{2}t}\int_{Z}
F(\eta\sqrt{\frac{1}{1-(s/t)}},a x,y,\tilde x,y'')
\chi_{1}(a/\tilde x)H^{Z}(\frac{s}{\eta^{2}t},\tilde x,y'',
ax',y')\ d\tilde x\ dy''\ ds.\end{equation}
The temporal integral is from $s=0$ to $s=\eta^{2}t$, so the 
second term is a short-time heat kernel. We break the integral into
two pieces, called I-a and I-b, 
with $\tilde x<2\eta\sqrt t$ and $\tilde x>2\eta\sqrt t$ respectively.

Consider I-a; here $\tilde x/(\eta\sqrt t)$ is between
$1/2$ and 2. The first heat kernel $F$ is phg conormal on $X_{b}^{3}(\eta,
a,\tilde x)$, as is the cutoff $\chi_{1}(a/\tilde x)$. 
Since $\tilde x/a$ is between 1/2 and 2, $F$ is in fact
phg conormal on $X_{b}^{2}(\eta,a)$ with parametric dependence on $(\tilde x/
(\eta\sqrt t))$ and $s/t$ as well as on $(x,y,y'')$. Using $\sigma=\sqrt s/(\eta\sqrt t)$,
the second term is phg conormal on the space
\[
[Z_{sc}^{2}(\tilde x,y'',\eta\sqrt t x',y')\times[0,1]_{\sigma};
\{\sigma=0,\tilde x=\epsilon x',y''=y'\}].\]
Combining all the terms, the integrand is phg conormal on:
\begin{equation}\label{spacethree}X_{b}^{2}(a,\eta)\times(
[Z_{sc}^{2}(\tilde x,y'',\eta\sqrt t x',y')\times[0,1]_{\sigma};
\{\sigma=0,\tilde x=\epsilon x',y''=y'\}]),\end{equation}
with parametric dependence on $s/t=(\eta\sigma)^{2}$ as well
as $(x,y,y'')$. The spatial integral is in $\tilde x$ and $y''$, and 
the pushforward map is a b-fibration from (\ref{spacethree}) onto
$X_{b}^{2}(a,\eta)
\times [0,1]_{\eta\sqrt t x'}\times [0,1]_{\sigma}$; its fibers
are transverse to the cutoff at $\tilde x=2\eta\sqrt t$.
By the pushforward theorem, the part of (\ref{intregionone}) with
$\tilde x<2\eta\sqrt t$ is phg conormal on
$X_{b}^{2}(a,\eta)\times [0,1]_{\eta\sqrt t}\times [0,1]_{\sigma}$ 
with parametric dependence on $(x,y,x',y')$ and also on $\eta^{2}\sigma^{2}$.

Now we consider the temporal integral; it goes from $0$ to $1$ in $\sigma$.
We first need to restrict to $a=\eta\sqrt t$ in the space $X_{b}^{2}(a,\eta)
\times [0,1]_{\eta\sqrt t}\times [0,1]_{\sigma}$. 
$X_{b}^{3}(a,\eta,\eta\sqrt t)\times [0,1]_{\sigma}$ is a
blow-up of this space, and restricting to $a=\eta\sqrt t$ gives a function
that is phg conormal on $X_{b}^{2}(\eta,\eta\sqrt t)\times [0,1]_{\sigma}$
with parametric dependence on $(\eta\sigma)^{2}$. However, this parametric
dependence amounts to multiplying each coefficient by a smooth function of
$(\eta\sigma)^{2}$; we conclude that the spatial integral in
(\ref{intregionone}) is phg conormal on $X_{b}^{2}(\eta,\eta\sqrt t)
\times [0,1]_{\sigma}$. Integration in $\sigma$ is then a b-fibration
onto $X_{b}^{2}(\eta,\eta\sqrt t)$. So the part of (\ref{intregionone}) 
with $\tilde x<2\eta\sqrt t$ is phg conormal on $X_{b}^{2}(\eta,\eta\sqrt t)$.

So far we have treated $\eta\sqrt t$ as a formal variable, but now we
remember that $\eta\sqrt t/\eta=\sqrt t$ is bounded. Therefore we have an
expansion in $(\eta,(\eta\sqrt t)/\eta)=(\eta,\sqrt t)$, and the part
of (\ref{intregionone}) with $\tilde x<2\eta\sqrt t$ is phg conormal in
$(\eta,\sqrt t)$ and hence on $Q_{0}$.

\medskip

Now consider integral I-b. This is easier, as the second term is away from
the diagonal. The first term is phg conormal on 
$X_{b}^{3}(\eta,a,\tilde x)$
with parametric dependence on $(x,y,y'')$ and $\sqrt s/\sqrt t=\sigma\eta$,
and the second term is now
phg conormal on $X_{b}^{2}(\tilde x,\eta\sqrt t)\times [0,1]_{\sigma}$
with parametric dependence on $(x',y',y'')$. The cutoff functions are also phg
conormal there. We immediately restrict to $a=\eta\sqrt t$ and see that
the integrand is phg conormal on
\[X_{b}^{3}(\eta,\eta\sqrt t,\tilde x)\times [0,1]_{\sigma}\] with parametric
dependence on $(x,y,x',y',y'')$ and $\eta\sigma$. As before, the parametric
dependence on $(\eta\sigma)$ does not affect the polyhomogeneity. 
Integration in $\tilde x$ and $y''$ is a
b-fibration onto $X_{b}^{2}(\eta,\eta\sqrt t)\times [0,1]_{\sigma}$
with fibers transverse to the cutoff $\tilde x=2\eta\sqrt t$, so we
can use the pushforward theorem; the analysis then proceeds exactly as for
integral I-a. Therefore (\ref{intregionone}) is phg conormal on $Q_{0}$.

\subsubsection{Region II:}

For region II, the third argument of the heat kernel is $s/(\eta^{2}t)$, which is
now larger than 1. So both heat kernels are long-time. We must analyze:
\begin{equation}\label{intregiontwo}\int_{\eta^{2}t}^{t/2}\int_{Z}
F(\eta\sqrt{\frac{t}{t-s}},\eta\sqrt t x,y,\tilde x,y'')
\chi_{1}(\eta\sqrt t/\tilde x)F(\eta\frac{\sqrt t}{\sqrt s},\tilde x,y'',
\eta\sqrt t x',y')\ d\tilde x\ dy''\ ds.\end{equation}

As before, break the $\tilde x$ integral into two pieces:
II-a, where $\tilde x<2\eta\sqrt t$, and
II-b, where $\tilde x>2\eta\sqrt t$. In each pieces,
switch to an integral in $\sigma'=(\eta\sqrt t/\sqrt s)$.
In both pieces, the first kernel is phg conormal on
$X_{b}^{3}(\eta,\eta\sqrt t,\tilde x)$, but with parametric dependence
on $\eta/\sigma'$; hence the first kernel is phg conormal on
$X_{b}^{4}(\eta,\eta\sqrt t,\tilde x,\sigma')$. It has parametric
dependence on $(x,y,y'')$.

Consider region II-a. As in the analysis of I-a, 
the restriction to $\tilde x<2\eta\sqrt t$
means that $\tilde x/(\eta\sqrt t)$ is between 1/2 and 2. We again introduce the
placeholder $a$, but this time we also introduce a placeholder $b$ for $\sigma'$.
Since $\tilde x/(\eta\sqrt t)$ is bounded, the first term is phg conormal on
$X_{b}^{3}(a,\eta,b)$ with parametric dependence on $\tilde 
x/(\eta\sqrt t)$ as well as $(x,y,y'')$. The cutoff is phg conormal
on $X_{b}^{2}(\tilde x,\eta\sqrt t)$.
The second kernel is phg conormal on $Z^{2}_{\sigma',sc}(\sigma',\tilde x,
y'',\eta\sqrt t x', y')$. Therefore, the whole integrand is phg conormal on
\begin{equation}\label{spacefour}
X_{b}^{3}(a,\eta,b)\times Z^{2}_{\sigma',sc}(\sigma',\tilde x,y'',
\eta\sqrt t x', y'),
\end{equation} with parametric dependence on $(x,y)$. Integration
in $\tilde x$ and $y''$ is a pushforward by a b-fibration onto
$X_{b}^{3}(a,\eta,b)\times X_{b}^{2}(\sigma',\eta\sqrt t)$. So the spatial
integral in (\ref{intregiontwo}) over $\tilde x<2\eta\sqrt t$ is phg conormal on 
$X_{b}^{3}(a,\eta,b)\times X_{b}^{2}(\sigma',\eta\sqrt t)$, 
with parametric dependence on $(x,y,x',y')$. By the pullback theorem and the theory of b-stretched products in \cite{ms} (also see the remarks after proposition A.10 in \cite{s}), it is phg conormal
on $X_{b}^{5}(a,\eta,b,\sigma',\eta\sqrt t)$.

We need to resolve the placeholders; restricting to $a=\eta\sqrt t$ 
and then $b=\sigma'$, the result is phg conormal
on $X_{b}^{3}(\eta,\sigma',\eta\sqrt t)$. We now integrate
in $\sigma'$ from $\eta\sqrt 2$ to $1$; this integral is a pushforward
by a b-fibration onto $X_{b}^{2}(\eta,\eta\sqrt t)$. So the part of
(\ref{intregiontwo}) with $\tilde x<2\eta\sqrt t$ is phg conormal
on $X_{b}^{2}(\eta,\eta\sqrt t)$, and thus phg conormal on $Q_{0}$ as in
the analysis of integral I-a.

\medskip

As for II-b, the first kernel is phg conormal on 
$X_{b}^{4}(\tilde x,\eta,\eta\sqrt t,\sigma')$.
The second kernel is now supported away from the scattering diagonal, and hence
is phg conormal on $X_{b}^{3}(\sigma',\tilde x,\eta\sqrt t)$; and the cutoff
is phg conormal on $X_{b}^{2}(\tilde x,\eta\sqrt t)$. By the pullback theorem,
the integrand is phg conormal on $X_{b}^{4}(\tilde x,\eta,\eta\sqrt t,\sigma')$
with parametric dependence in $(x,y,x',y',y'')$. Integration in $\tilde x$
and $\sigma'$, as before, is pushforward by a b-fibration 
onto $X_{b}^{2}(\eta,\eta\sqrt t)$,
and the analysis proceeds as in II-a. We conclude that (\ref{intregiontwo})
is phg conormal on $Q_{0}$ with smooth dependence on $z$ and $z'$.

\subsubsection{Region III:}

In both region III and region IV, the second
term is a long-time heat kernel. Make the substitution $\bar s=t-s$;
our integral becomes (modulo polynomial factors)
\begin{equation}\label{intthreefour}\int_{0}^{t/2}\int_{Z}
H^{Z}(\frac{\bar s}{\eta^{2}t},\eta\sqrt t x,y,\tilde x,y'')
\chi_{1}(\eta\sqrt t/\tilde x)F(\eta\sqrt{\frac{t}{t-\bar s}}
,\tilde x,y'',
\eta\sqrt t x',y')\ d\tilde x\ dy''\ d\bar s.\end{equation}

\medskip

First consider region III, which is the integral (\ref{intthreefour}) from
$s=0$ to $s=\eta^{2} t$. 
Change variables to $\bar\sigma=\sqrt{\bar s}/(\eta\sqrt t)$; it is integrated
from $0$ to $1$. $H^{Z}$ is a short-time heat kernel away from the diagonal
and hence is phg conormal on $X_{b}^{2}(\eta\sqrt t,\tilde x)
\times [0,1]_{\bar\sigma}$, with parametric
dependence on $(x,y,y'')$; $\chi_{1}(\eta\sqrt t/\tilde x)$ is phg conormal
on the same space. Therefore, the first heat kernel and cutoff are
also phg conormal on $X_{b}^{2}(\eta\sqrt t x',\tilde x)\times [0,1]_{\bar\sigma}$,
with parametric dependence on $(x,x',y,y'')$.

As with the integrals in region I, $F$ is phg conormal
on $Z_{\eta,sc}^{2}(\eta,\tilde x,y'',\eta\sqrt t x',y')$,
with parametric dependence on $\sqrt{\bar s}/\sqrt t=\eta\bar\sigma$ between
0 and $1/2$ (the parameter
isn't involved in the spatial coordinates on $Z_{\eta,sc}^{2}$).
Let $c$ be a placeholder for $\eta\bar\sigma$. By the pullback
theorem, the integrand of (\ref{intthreefour}) in region III is phg
conormal on
\begin{equation}\label{spacefive}
Z_{\eta,sc}^{2}(\eta,\tilde x,y'',\eta\sqrt t x',y')
\times [0,1]_{\bar\sigma}\end{equation} with parametric dependence on $(x,y,x',c)$.
The spatial integral is a pushforward by a b-fibration onto
$X_{b}^{2}(\eta,\eta\sqrt t x')\times [0,1]_{\bar\sigma}$,
which is the same as $X_{b}^{2}(\eta,\eta\sqrt t)\times [0,1]_{\bar\sigma}$, 
with parametric dependence on $(x,y,x,',y',c)$.
So in region III, the spatial integral in (\ref{intthreefour}) is phg conormal
on $X_{b}^{2}(\eta,\eta\sqrt t)\times [0,1]_{\bar\sigma}$ with parametric
dependence on $c=(n\bar\sigma)$ as well as $(x,y,x',y')$. We now follow the
analysis of integral I-a, with $\bar\sigma$ in place of $\sigma$, 
to conclude that the part of (\ref{intthreefour})
corresponding to region III is phg conormal on $Q_{0}$.

\subsubsection{Region IV:}

Finally, we have region IV, which corresponds to $\bar s=\eta^{2}t$ to 
$\bar s=t/2$. 
Both heat kernels are long-time, and we need to analyze:
\begin{equation}\label{intregionfour}\int_{\eta^{2}t}^{t/2}\int_{Z}
F(\eta\frac{\sqrt t}{\sqrt{\bar s}},\eta\sqrt t x,y,\tilde x,y'')
\chi_{1}(\eta\sqrt t/\tilde x)F(\eta\sqrt{\frac{t}{t-\bar s}}
,\tilde x,y'',
\eta\sqrt t x',y')\ d\tilde x\ dy''\ d\bar s.\end{equation}

As before, change variables to 
$\bar\sigma'=\eta\sqrt t/\sqrt{\bar s}$; then 
$\bar\sigma'$ goes from $\eta/\sqrt 2$ to 1.
The first term is an off-diagonal long-time heat kernel, and is thus
phg conormal on $X_{b}^{3}(\bar\sigma',\eta\sqrt t,\tilde x)$ with
parametric dependence on $(x,y,y'')$; so is the cutoff function $\chi_{1}(
\eta\sqrt t/\tilde x)$.
The second term is again phg conormal on $Z_{\eta,sc}^{2}(\eta,\tilde x,y'',
\eta\sqrt t x',y')$ with parametric dependence on 
$\sqrt{\bar s}/\sqrt t=\eta/\bar\sigma'$ smooth down to $\sqrt{\bar s}/\sqrt t=0$.
Introduce three placeholders $a=\eta\sqrt t x'$, $b=\tilde x$, $d=\eta$. Then
the integrand of (\ref{intregionfour}) is phg conormal on
\begin{equation}\label{spacesix}X_{b}^{4}(\bar\sigma',a,b,d)
\times Z^{2}_{\eta,sc}(\eta,\tilde x,y'',\eta\sqrt t x', y')
\end{equation} with parametric dependence on $(x,y,x',y'')$.

All the integrals are absolutely convergent since $\bar\sigma'$ does not go
down to zero (and the spatial integrals are over compact sets). So we can switch the order of integration and integrate in $\bar\sigma'$
first. The $\bar\sigma'$ integral is phg conormal on
$X_{b}^{3}(a,b,d)\times Z^{2}_{\eta,sc}(\eta,\tilde x,y'',\eta\sqrt t x',y')$.
Since $a=\eta\sqrt t x'$, $b=\tilde x$, and $d=\eta$, we would like to
say that the $(a,b,d)$ part is phg conormal on $X_{b}^{3}(\eta\sqrt t x',
\tilde x,\eta)$ and hence on its blowup $Z^{2}_{\eta,sc}(\eta,\tilde x,y'',
\eta\sqrt t x',y')$. We could then conclude that the whole $\bar\sigma'$ integral 
is phg conormal on $Z^{2}_{\eta,sc}(\eta,\tilde x,y'',\eta\sqrt tx',y')$.

To justify this assertion, note that
the $\bar\sigma'$ integral has expansions at all boundary faces and corners of 
$Z^{2}_{\eta,sc}(\eta,\tilde x,y'',\eta\sqrt t x',y')$, with coefficients
in $\mathcal A_{phg}^{\mathcal E}(X_{b}^{3}(a,b,d))$ for some fixed index
family $\mathcal E$ on $X_{b}^{3}$. It may therefore be written as a sum
of terms of the form $u_{i}v_{i}$, where $u_{i}$ is phg conormal on
$Z^{2}_{\eta,sc}(\eta,\tilde x,y'',\eta\sqrt t x',y')$ with index family
$\mathcal F_{i}$ approaching infinity, and $v_{i}$ is phg conormal on
$X_{b}^{3}(a,b,d)$ with fixed index family $\mathcal E$. Replacing the
placeholders, $v_{i}$ is phg conormal on $X_{b}^{3}(\eta,\tilde x,\eta\sqrt t x')$
with fixed index family, and hence is phg conormal on the pullback
$Z^{2}_{\eta,sc}(\eta,\tilde x,y'',\eta\sqrt tx',y')$ with fixed index
family $\mathcal E^{\sharp}$. Therefore $u_{i}v_{i}$ is phg
conormal on $Z^{2}_{\eta,sc}(\eta,\tilde x,y'',\eta\sqrt tx',y')$ with
index family $\mathcal E^{\sharp}+\mathcal F_{i}$, which approaches
infinity as $i$ increases. The sum over $i$ is therefore phg conormal
on $Z^{2}_{\eta,sc}(\eta,\tilde x,y'',\eta\sqrt tx',y')$, with
parametric dependence on $(x,y,x')$.

In conclusion, (\ref{intregionfour}) is the integral over
$\tilde x$ and $y''$ of a function which is phg conormal on $Z^{2}_{\eta,sc}
(\eta,\tilde x,y'',\eta\sqrt tx',y')$. By the pushforward theorem,
(\ref{intregionfour}) is phg conormal on $X_{b}^{2}(\eta\sqrt tx',\eta)$
and hence on $X_{b}^{2}(\eta\sqrt t,\eta)$ with smooth dependence on
$(x,y,x',y')$. By the same analysis as in region I, (\ref{intregionfour})
is thus phg conormal on $Q_{0}$. This completes the analysis of region IV,
and hence of the first integral.

\subsection{The second integral:}
The second integral is
\[\int_{0}^{t}\int_{\Omega_{0}}V_{2}(z)H^{\Omega_{0}}(t-s,z,z'')\chi_{2}(z'')
H^{\Omega_{0}}(s,z'',z')\chi_{2}(z')\ dz''\ ds.\]
This integral is independent of $\epsilon$. Moreover,
by the same arguments as in the proof of Lemma \ref{decay}, it decays to infinite
order in $t$ at $t=0$, uniformly in the spatial variables as they
range over the cutoff region. Therefore, this integral is phg
conormal on $Q$ and hence on $Q_{0}$, uniformly in $z$ and $z'$.

\subsection{The third integral:}
The third integral, up to polynomials in $\epsilon$ and suppressing
$V_{1}(z)$ and $\chi_{2}(z')$, is:
\begin{equation}\label{thirdint}
\int_{0}^{t}\int_{\Omega_{0}}H^{Z}(\frac{t-s}{\epsilon^{2}},\frac{z}{\epsilon},
\frac{z''}{\epsilon})(\chi_{1}\tilde\chi_{2})(z'')H^{\Omega_{0}}(s,z'',z')
\ dz''\ ds.\end{equation}
Note that now $z''$ is supported between $r=1/2$ and $r=2$. We again break
it into the small-$\tau$ and large-$\tau$ cases.

\subsubsection{Small-$\tau$ case:}

For small $\tau$, we let $\rho=s/\epsilon^{2}$ and then let $\mu=1-\rho/\tau$.
Suppressing polynomials, we get:
\begin{equation}\label{thirdintsmall}
\int_{0}^{1}\int_{\Omega_{0}}H^{Z}(\mu,\frac{z}{\epsilon},\frac{z''}{\epsilon})
(\chi_{1}\tilde\chi_{2})(z'')H^{\Omega_{0}}(\epsilon^{2}\tau(1-\mu),z'',z')\
dz''\ d\mu.\end{equation}
The first heat kernel is off-diagonal, and hence is phg conormal
on $[0,1]_{\mu}\times[0,1/2)_{\epsilon}$ 
with parametric dependence on $z$ and $z''$.
Let $a$ be a placeholder for $\epsilon^{2}\tau(1-\mu)$; 
the second term is on-diagonal but away from the conic tip, and hence is phg
conormal in $(z'',z',\sqrt a)$ with a blowup at 
$\{\sqrt a=0,z''=z'\}$. 
The whole integrand is phg conormal on
\begin{equation}\label{spaceseven}
[[0,1]_{\mu}\times[0,1/2)_{\epsilon}\times[0,1]_{\sqrt a}\times(z',z'');
\{\sqrt a=0,z'=z''\}],
\end{equation} 
with parametric dependence on $z$. Integration in $z''$
is a b-fibration onto $[0,1]_{\mu}\times[0,1/2)_{\epsilon}\times [0,1]_{\sqrt a}$,
so the spatial integral in (\ref{thirdintsmall}) is phg conormal on 
$[0,1]_{\mu}\times[0,1/2)_{\epsilon}\times [0,1]_{\sqrt a}$ 
and hence on 
$[0,1]_{\mu}\times[0,1/2)_{\epsilon}\times [0,1]_{a}$, 
with parametric dependence on $(z,z')$.

The map from $[0,1]_{\mu}\times[0,1/2)_{\epsilon}\times[0,\sqrt C]_{\sqrt \tau}$
to $[0,1]_{\mu}\times[0,1/2)_{\epsilon}\times [0,1]_{a}$ given by
$(\mu,\epsilon,\tau)\rightarrow(\mu,\epsilon,\epsilon\sqrt\tau(1-\mu))$ is a
b-map, so by the pullback theorem, the spatial integral in
(\ref{thirdintsmall}) is phg conormal
on $[0,1]_{\mu}\times[0,1/2)_{\epsilon}\times[0,\sqrt C]_{\sqrt \tau}$.
Integration in $\mu$ is then a b-fibration onto $[0,1/2)_{\epsilon}
\times[0,\sqrt C]_{\sqrt\tau}$, and hence (\ref{thirdintsmall}) is phg
conormal in $(\epsilon,\sqrt\tau)$ and therefore on $Q_{0}$.

\subsubsection{Large-$\tau$ case:}

Here we use $(\eta,\sqrt t)$, and switch $\bar s=t-s$. Break the integral into
two integrals, one from $\bar s=0$ to $\bar s=\eta^{2}t$ and the other from 
$\eta^{2}t$ to $t$.

For the first integral, let
$\sigma=\sqrt{\bar s}/(\eta\sqrt t)$, and the integral becomes (up to polynomials):
\begin{equation}\label{thirdintlargeone}
\int_{0}^{1}\int_{\Omega_{0}}H^{Z}(\bar\sigma^{2},\frac{z}{\eta\sqrt t},\frac{z''}
{\eta\sqrt t})
(\chi_{1}\tilde\chi_{2})(z'')
H^{\Omega_{0}}(t(1-\bar\sigma^{2}\eta^{2}),z'',z')\ dz''\ d\bar\sigma.\end{equation}
The first term is off-diagonal and hence is phg conormal in 
$(\bar\sigma,\eta\sqrt t)$, and therefore phg conormal in $(\bar\sigma,\eta,\sqrt t)$,
with parametric dependence on $z'$ and $z''$.
As in the small-$\tau$ case, 
let $a$ be a placeholder for $t(1-\bar\sigma^{2}\eta^{2})$,
and then the integrand is phg conormal on
\begin{equation}\label{spaceeight}
[[0,1]_{\bar\sigma}\times[0,C]_{\eta}\times[0,\sqrt T]_{\sqrt t}
\times[0,1]_{\sqrt a}\times(z',z'');
\{\sqrt a=0,z'=z''\}]
\end{equation}
with parametric dependence on $z$. As before, the spatial integral
in (\ref{thirdintlargeone}) is phg conormal on
$[0,1]_{\bar\sigma}\times [0,C]_{\eta}\times [0,T]_{t}\times [0,1]_{a}$,
with parametric dependence on $(z,z')$. 
Therefore
it is also phg conormal on $[0,1]_{\bar\sigma}\times [0,1/2]_{\eta}\times
X_{b}^{2}(t,a)$. 

Since $1-\bar\sigma^{2}\eta^{2}$
is bounded away from zero for small $\eta$,
the set $\{a=t(1-\bar\sigma^{2}\eta^{2})\}$ is a p-submanifold of 
$[0,1]_{\bar\sigma}\times [0,1/2]_{\eta}\times
X_{b}^{2}(t,a)$. By the usual restriction theorems for phg conormal functions (see \cite{me2} or proposition A.8 in \cite{s}), 
the spatial integral in (\ref{thirdintlargeone}) is phg conormal
on $[0,1]_{\bar\sigma}\times [0,1/2]_{\eta}\times[0,1]_{\sqrt t}$.
Integration in $\bar\sigma$ is a b-fibration onto $(\eta,\sqrt t)$ space,
and therefore (\ref{thirdintlargeone}) is phg conormal in $(\eta,\sqrt t)$
and hence on $Q_{0}$.

\medskip

In the second integral, the first heat kernel is long-time, so we switch to $F$.
Switch to $\bar\sigma'=\eta\sqrt t/\sqrt{\bar s}$, and the integral becomes,
modulo polynomials:
\begin{equation}\label{thirdintlargetwo}
\int_{0}^{1}\chi(\{\eta<\bar\sigma'\})
\int_{\Omega_{0}}F(\bar\sigma',\eta\sqrt t x, y,
\eta\sqrt t x'', y'')H^{\Omega_{0}}(t(1-(\frac{\eta}{\bar\sigma'})^{2})),z'',z')\
dz''\ d\bar\sigma'.\end{equation}
The first heat kernel is off-diagonal, 
hence phg conormal on $X_{b}^{2}(\bar\sigma',\eta\sqrt t)$,
with parametric dependence on $(z,z'')$. Let $a$ be a placeholder for 
$t(1-(\eta/\bar\sigma')^{2})$. Then exactly as in the analysis
of (\ref{thirdintlargeone}), the spatial integral in (\ref{thirdintlargetwo})
is phg conormal on
$X_{b}^{2}(\bar\sigma',\eta\sqrt t)\times [0,1]_{a}$, with parametric dependence on $(z,z')$.

Let $\tilde X$ be the portion of $X_{b}^{3}(\bar\sigma',\eta,\eta\sqrt t)$
where $\bar\sigma'>\eta$ and $(\eta\sqrt t)/\eta<\sqrt T$, 
with a boundary face at $\bar\sigma'=\eta$. Then $\tilde X$ is precisely
$[0,\sqrt T]_{\sqrt t}\times [0,1]_{\eta/\bar\sigma'}\times [0,1]_{\bar\sigma'}$.
The map from $\tilde X$ to $X_{b}^{2}(\bar\sigma',\eta\sqrt t)\times [0,1]_{a}$
given by $(\sqrt t,\eta/\bar\sigma',\bar\sigma')\rightarrow 
(\bar\sigma,(\sqrt t(\eta/\bar\sigma')\bar\sigma'),\sqrt t(1-(\eta/\bar\sigma')^{2}))$
is a b-map. Moreover, the cutoff $\chi(\{\bar\sigma'>\eta\})$ is also phg conormal
on $\tilde X$. Therefore the whole $\bar\sigma'$ integrand is phg conormal
on $\tilde X$ and hence on $X_{b}^{3}(\bar\sigma',\eta,\eta\sqrt t)$.
We now follow the analysis of region II-a of the first integral to conclude
that (\ref{thirdintlargetwo}) is phg conormal on $Q_{0}$. This completes
the analysis of the third integral.

\subsection{The fourth integral:}
The fourth integral, suppressing $V_{2}(z)$
and $\chi_{1}(z')$ and again discarding polynomial factors, is:
\[\int_{0}^{t}\int_{\Omega_{0}}H^{\Omega_{0}}
(t-s,z,z'')(\chi_{2}\tilde\chi_{1})(z'')
H^{Z}(\frac{s}{\epsilon^{2}},\frac{z''}{\epsilon},\frac{z'}{\epsilon})\ dz''\ ds.\]
The $H^{\Omega_{0}}$ term is away from the diagonal;
it simply
decays to infinite order in $(t-s)$, uniformly in $z$ and $z''$. The
calculations are therefore a much-simplified version of those in the
analysis of the first integral.

\subsubsection{Small-$\tau$ case:}

First assume that $\tau$ is bounded; let
$\rho=s/\epsilon^{2}$, and we get, suppressing all cutoffs and polynomials:
\begin{equation}\label{fourthintsmall}
\int_{0}^{\tau}\int_{Z}H^{\Omega_{0}}(\epsilon^{2}(\tau-\rho),z,z'')
H^{Z}(\rho,\tilde x, y'',\epsilon x', y')\ d\tilde x\ dy''\ d\rho.
\end{equation}.
The first heat kernel 
is phg conormal on $[0,1]_{\epsilon}\times [0,1]_{\tau-\rho}$,
with parametric dependence on $x$, $y$, $\tilde x$, and $y''$. 
Therefore, the whole spatial integrand is phg conormal on
$[Z_{sc}^{2}(\tilde x,y'',\epsilon x',y')\times [0,1]_{\rho};
\{\sqrt\rho=0, z''=z'\}]\times [0,1]_{\tau-\rho}$ with
parametric dependence on $z$. By the pushforward
theorem, the spatial integral in
(\ref{fourthintsmall}) is phg conormal on $[0,1]_{\rho}\times[0,1]_{\epsilon x'}
\times[0,1]_{\tau-\rho}$. As with the short-time piece of the first integral,
it is actually phg conormal on $[0,1]_{\tau}\times [0,1]_{\rho/\tau}\times
[0,1]_{\epsilon}$; by the same analysis,
(\ref{fourthintsmall}) is phg conormal on $Q_{0}$.

\subsubsection{Large-$\tau$ case:}

Now assume that $\tau>2$. Break the
$s$-integral into two pieces: $s<\eta^{2}t$ and $s>\eta^{2}t$.
For the $s<\eta^{2}t$ portion, we have:
\begin{equation}\label{fourthintlargeone}
\int_{0}^{1}\int_{Z}H^{\Omega_{0}}(t(1-\sigma^{2}\eta^{2})),z,z'')
H^{Z}(\sigma^{2},\tilde x, y'',\eta\sqrt t x', y')\ d\tilde x\ dy''\ d\sigma.
\end{equation}
Since $\eta<1/2$
and $\sigma\leq 1$, the first kernel is 
a smooth function of $(t,\sigma,\eta)$, and hence
smooth on $X_{b}^{2}(\eta\sqrt t,\eta)\times [0,1]_{\sigma}$ (remember that
$\sqrt t=(\eta\sqrt t/\eta)$ is bounded). As in part I-a,
introduce the placeholder $a$ for $\eta\sqrt t$.
The second kernel is phg conormal on the same space as in the small-$\tau$
case above, with $\eta\sqrt t$ replacing $\epsilon$. Therefore,
the whole spatial integrand is phg conormal on
\begin{equation}\label{spacenine}
[Z_{sc}^{2}(\tilde x,y'',\eta\sqrt t x',y')\times [0,1]_{\sigma};
\{\sigma=0, z''=z'\}]\times X_{b}^{2}(a,\eta)\end{equation}
with parametric dependence on $z$. Integration in $(\tilde x,y'')$ is
pushforward by
a b-fibration onto $X_{b}^{2}(a,\eta)\times [0,1]_{\eta\sqrt t x'}\times
[0,1]_{\sigma}$. From here, we follow precisely the analysis in I-a to show
that (\ref{fourthintlargeone}) is phg conormal on $Q_{0}$.

Finally, consider the integral from $\eta^{2}t$ to $t$. Using $\sigma'$ instead,
we have
\begin{equation}\label{fourthintlargetwo}
\int_{\eta}^{1}\int_{Z}H^{\Omega_{0}}(t(1-(\eta/\sigma')^{2})),z,z'')
F(\sigma',\eta\sqrt t x'', y'',\eta\sqrt t x', y')\ dx''\ dy''\ d\sigma'.
\end{equation}
We proceed analogously to the calculation in region II-a; the first term is
a smooth function of $\sqrt t$ and $\eta/\sigma'$, so by the
pullback theorem, it is phg conormal
on the region of $X_{b}^{3}(\eta\sqrt t,\eta, \sigma')$
where $(\eta\sqrt t/\eta=\sqrt t)$ is bounded and $\eta/\sigma'$ is bounded,
and hence on $X_{b}^{3}(\eta\sqrt t,\eta,\sigma')$.
As in II-a, introduce placeholders $a$ for $\eta\sqrt t$ and $b$ for
$\sigma'$; the analysis is then identical to II-a, and we conclude that
(\ref{fourthintlargetwo}) is phg conormal on $Q_{0}$. This completes
the analysis of the fourth integral, and with it, the proof of the technical
lemma.


\begin{thebibliography}{ABCDE}
\bibitem[Alb]{alb} Albin, P. \textit{A renormalized index theorem for some complete asymptotically regular metrics: the Gauss-Bonnet theorem.} Adv. in Math., 213(1):1-52, 2007. arXiV:math/0512167v1.
\bibitem[AT]{at} Ann\'e, C., and Takahashi, J. \textit{Partial collapsing
and the spectrum of the Hodge Laplacian}. arXiv:1007.2949.
\bibitem[vBS]{bs} van den Berg, M., and Srisatkunarajah, S.
\textit{Heat Equation for a Region in $\mathbb R^{2}$ with a Polygonal Boundary},
J. London Math. Soc. 2(37):119-127, 1988.
\bibitem[BS]{brs} Bruning, J., and Seeley, R.
\textit{The expansion of the resolvent near a singular stratum of conic type.}
J. Funct. Anal. 95(2):255-290, 1991.
\bibitem[Ch1]{ch1} Cheeger, J. \textit{Analytic torsion and the heat equation.} Ann. of Math. (2) 109(2):259-322, 1979.
\bibitem[Ch2]{ch2} Cheeger, J. \textit{Spectral geometry of singular 
Riemannian spaces.} J. Differential Geom. 18(4):575-657, 1983.
\bibitem[Ding]{d} Ding, Y. \textit{Heat kernels and Green's functions on limit spaces.} Comm. Anal. Geom. 10(3):475-514, 2002.
\bibitem[EMM]{emm} Epstein, C., Melrose, R.B., and Mendoza, G. 
\textit{Resolvent of the Laplacian on strictly pseudoconvex domains}, Acta
Math. 167:1-106, 1991.
\bibitem[Fed]{fed} Fedosov, B. \textit{Asymptotic formulas for eigenvalues
of the Laplacian in a polyhedron.} Doklady Akad, Nauk SSSR, 157:536-538, 1964.
\bibitem[Gr]{gri} Grieser, D. \textit{Basics of the b-calculus.}
In: Gil, J. B. et al. (ed.) Approaches to Singular Analysis, Adv. in PDE,
Birkh\"auser, Basel, 2000.
\bibitem[GH1]{gh1} Guillarmou, C., and Hassell, A. \textit{Resolvent at low energy and Riesz transform for Schršdinger operators on asymptotically conic manifolds. I.} Math. Ann. 341(4):859-896, 2008.
\bibitem[GH2]{gh2} Guillarmou, C., and Hassell, A. \textit{Resolvent at low energy and Riesz transform for Schršdinger operators on asymptotically conic manifolds. II.} Ann. Inst. Fourier (Grenoble) 
59(4):1553-1610, 2009.
\bibitem[Gui]{g3} Guillarmou, C. \textit{Resolvent at low energy and Riesz transform for differential forms.} Unpublished.
\bibitem[Kh]{kh} Khuri, H. \textit{Heights on the moduli space of Riemann surfaces with circle boundaries.} Duke Math. J. 64(3):555-570, 1991.
\bibitem[Kim]{kim} Kim, Young-Heon. \textit{Surfaces with boundary: Their uniformizations, determinants of Laplacians, and isospectrality.} Duke Math J. 144(1):73-107, 2008.
\bibitem[Ma]{ma} Mazzeo, R. 
\textit{Elliptic theory of differential edge operators, I.} 
\bibitem[Ma2]{ma2} Mazzeo, R. \textit{Resolution blowups, spectral convergence
and quasi-asymptotically conic spaces.}
Journ\'ees \'Equations aux D\'eriv\'ees Partielles, Evian, 2006.
\bibitem[MR]{mr} Mazzeo, R., and Rowlett, J. \textit{A heat trace anomaly on polygons.}
arXiv:0901.0019.
Comm. PDE 16:1615-1665, 1991.
\bibitem[McD]{mcd} McDonald, P. T. \textit{The Laplacian for Spaces with
Cone-Like Singularities.} Thesis, Massachusetts Institute of Technology, 1990.
\bibitem[Me]{me} Melrose, R. B. \textit{The Atiyah-Patodi-Singer index theorem.} 
A.K. Peters, Ltd., Boston, MA, 1993.
\bibitem[Me2]{me2} Melrose, R. B. \textit{Differential analysis on manifolds 
with corners.} In preparation, available online at http://math.mit.edu/~rbm/book.html.
\bibitem[Me3]{me3} Melrose, R. B. \textit{Calculus of conormal distributions on manifolds with corners.} Intl. Math. Res. Not., 1992(3):51-61, 1992.
\bibitem[MS]{ms} Melrose, R. B., and Singer, M. \textit{Scattering configuration
spaces.} arXiv:0808.2022.
\bibitem[Mo]{mo} Mooers, E. \textit{The heat kernel for manifolds with conic
singularities.} Thesis, Massachusetts Institute of Technology, 1996.
\bibitem[M\"u]{mu} M\"uller, W. \textit{Analytic torsion and R-torsion of Riemannian manifolds.} Adv. in Math. 28(3):233-305, 1978.
\bibitem[OPS1]{ops1} Osgood, B., Phillips, R., and Sarnak, P. \textit{Extremals of determinants of Laplacians.} J. Funct. Anal. 80(1):148-211, 1988.
\bibitem[OPS2]{ops2} Osgood, B., Phillips, R., and Sarnak, P. \textit{Compact isospectral sets of surfaces.} J. Funct. Anal. 80(1):212-234, 1988.
\bibitem[OPS3]{ops3} Osgood, B.; Phillips, R.; Sarnak, P. \textit{Moduli space, heights and isospectral sets of plane domains.} Ann. of Math. (2), 129(2):293-362, 1989.J. Geom. Phys. 54(3):355-371, 2005.
\bibitem[R]{r} Rosenberg, S. \textit{The Laplacian on a Riemannian Manifold}
Cambridge University Press, 1997.
\bibitem[Row]{row} Rowlett, J. \textit{Spectral geometry and asymptotically conic
convergence.} Comm. Anal. Geom., 16(4):735-798, 2008.
\bibitem[Row2]{row2} Rowlett, J. \textit{Thesis Correction.}
Unpublished, available online at people.mpim-bonn.mpg.de/rowlett/Thesis-Correction.pdf.
\bibitem[RS]{rs} Ray, D. B. and Singer, I. M. \textit{R-torsion and the Laplacian on Riemannian manifolds.} Adv. in Math. 7:145-210, 1971.
\bibitem[S]{s} Sher, D.A. \textit{Conic degeneration and the determinant of the Laplacian.} Ph. D. thesis, Stanford University, 2012.
\bibitem[S1]{s1} Sher, D.A. \textit{The heat kernel on an asymptotically conic manifold.} To appear, Anal. \& PDE; available online at arXiv.org/pdf/1208.1808 .
\end{thebibliography}
\end{document}